\documentclass[11pt,a4paper]{article}
\usepackage[margin=3cm]{geometry}
\usepackage{amsmath,amsthm,stmaryrd, amssymb,verbatim,amsfonts,mathtools,graphicx}
\usepackage[export]{adjustbox}
\usepackage{xcolor,enumerate,verbatim}

\newcommand{\cH}{\mathcal{H}}
\newcommand{\CC}{\mathbb C}\newcommand{\DD}{\mathbb D}
\newcommand{\Cmats}[2]{\CC^{#1{\times}#2}}
\newcommand{\sqmat}[1]{\Cmats{#1}{#1}}
\newcommand{\norm}[1]{\left\| #1 \right\|}

\newcommand{\RHP}{{\mathbb C_+}} 
\newcommand{\RHPtwo}{{\mathbb C_+^2}}
\newcommand{\matr}[1]{\begin{bmatrix}#1\end{bmatrix}}
\newcommand{\re}{\operatorname{Re}}
\newtheorem{thm}{Theorem}[section]
\newtheorem{lemat}[thm]{Lemma}

\newtheorem{remark}[thm]{Remark}

\author{Radomi\l{} Baran \and Piotr Pikul\footnote{PP was supported by National Science Centre, Poland, grant no.~2025/09/X/ST1/00555.} \and Hugo J.\ Woerdeman\footnote{HW is partially supported by NSF grant DMS-2348720.} \and Micha\l{} Wojtylak}
\title{The classes of bivariate Schur and Herglotz  matrix-valued rational functions: realizations, symmetrizations, and related determinantal representations}

\begin{document}
\maketitle

\centerline{\it In memory of Franciszek Hugon Szafraniec (1940--2025)}

\begin{abstract}

We present a finite-dimensional realization theory for bivariate rational functions that are contractive or have nonnegative real part on the bidisc or on the bihalfplane. We show that the realization formula depends only on the underlying domain, while the distinction between the four resulting function classes is captured entirely by explicit matrix inequalities imposed on the realization matrices. These results provide finite-dimensional realizations for rational Schur--Agler and Herglotz--Agler functions, extending the previous infinite-dimensional results. We further characterize symmetric realizations by means of a Hermitian unitary symmetry of the realization data, yielding realization theorems on the symmetrized bihalfplane. Finally, we obtain determinantal representations for symmetric stable polynomials and, consequently, for stable polynomials on the symmetrized bihalfplane.
For rational functions over the real field the respective representations can use matrices with real entries. 
\end{abstract}

\section{Introduction}

 Bivariate, and more generally multivariate, rational matrix functions of this kind arise naturally in the theory of linear parametric systems; see, e.g., \cite{TGN,AIL,AGP}. 
Infinite-dimensional realization theorems for the Schur--Agler and Herglotz--Agler classes, generalizing to several variables the classical one-variable state-space realization theory for Schur- and Herglotz-class functions \cite{dBR1,dBR2,BGR}, were established by Ball and Kaliuzhnyi-Verbovetskyi in \cite{ball2015schur}. By applying a Cayley transform to the unitary  realization of the Schur--Agler class, they obtained a realization  for the Herglotz--Agler class: every function $H$ in the Herglotz--Agler class of the polyhalfplane  was shown there to admit a  realization
\[
H(\zeta) = D + C\bigl(P(\zeta)+A\bigr)^{-1}B, \qquad P(\zeta)=\zeta_1P_1+\dots+\zeta_dP_d,
\]
where $P_1,\dots,P_d$ are orthogonal projections summing to the identity on an auxiliary Hilbert space and the colligation matrix {\tiny $\begin{bmatrix}A&B\\C&D\end{bmatrix}$} satisfies a Kalman--Yakubovich--Popov positivity condition. In the current  paper we study finite-dimensional counterparts of such realizations being is in line with the program of \cite{grinshpan2016}, and continued in \cite{BW,baran2026contractive}. However,  in contrast to these three contributions, the bivariate setting allows us to consider  non-strict matrix inequalities, cf. Figure~\ref{fig:table1}.

Although contractivity and nonnegative real part, each posed on either the bidisc or the bihalfplane, give us four distinct function classes, only two realization formulas are needed between them: the formula depends on the underlying domain, not on which positivity condition is imposed. For functions on the bidisc, the realization has the form
\[
F(z_1,z_2)
 = D+C\bigl(I-(z_1P_1+z_2P_2)A\bigr)^{-1}
   (z_1P_1+z_2P_2)B,
\]
whereas for functions on the bihalfplane it is given by
\[
F(z_1,z_2)
 = D+C(z_1P_1+z_2P_2+A)^{-1}B.
\]
Here $P_1$ and $P_2$ are positive semidefinite matrices satisfying
$
P_1+P_2=I
$.
What separates the four function classes is then a family of matrix inequalities imposed on the realization matrices $A,B,C,$ and $D$.

\begin{figure}[hbt]
\[
\begin{array}{|c||@{}c|c|}
\hline
& \text{contractive} & \text{nonnegative real part}\\
\hline\hline
\rule{0pt}{5.5ex}\mathbb D^2 &
\begin{array}{c}
\displaystyle
\begin{bmatrix}
I-A^*A-C^*C & -A^*B-C^*D\\
-B^*A-D^*C & I-B^*B-D^*D
\end{bmatrix}\ge0
\\[1ex]
\text{Theorem~\ref{thm:kneseDD}}
\end{array}
&
\begin{array}{c}
\displaystyle
\begin{bmatrix}
I-A^*A & -A^*B+C^*\\
-B^*A+C & D+D^*-B^*B
\end{bmatrix}\ge0
\\[1ex]
\text{Theorem~\ref{thm:DDposRe}}
\end{array}
\\
\hline
\rule{0pt}{5.5ex}\RHPtwo &
\begin{array}{c}
\displaystyle
\begin{bmatrix}
A+A^*-C^*C & C^*D+B\\
D^*C+B^* & I-D^*D
\end{bmatrix}\ge0
\\[1ex]
\text{Theorem~\ref{thm:contrRHP}}
\end{array}
&
\begin{array}{c}
\displaystyle
\begin{bmatrix}
A+A^* & C^*-B\\
C-B^* & D+D^*
\end{bmatrix}\ge0
\\[1ex]
\text{Theorem~\ref{thm:posReRHP}}
\end{array}
\\
\hline
\end{array}
\]
\caption{System matrix conditions characterizing contractive and nonnegative real part realizations on $\mathbb D^2$ and $\RHPtwo$.}
\label{fig:table1}
\end{figure}

These conditions are summarized in Figure~\ref{fig:table1}. The upper-left entry is the contractivity condition for the system matrix
{\tiny \(
\begin{bmatrix}
A&B\\
C&D
\end{bmatrix},
\) }
and the lower-right entry is the classical Kalman--Yakubovich--Popov (KYP) inequality. The remaining two entries interpolate between these two cases: the upper-right condition, for instance, combines the $(A,B)$-coefficients of the contractive bidisc realization with the $(C,D)$-coefficients of the nonnegative-real-part bihalfplane realization.

For the upper-left entry of Figure~\ref{fig:table1} -- the matrix valued  Schur class on the bidisc -- a finite-dimensional realization of this kind was already available, going back to Kummert's classical work \cite{Kummert} on lossless two-dimensional scattering and revisited and extended by Knese \cite{knese2021}, based on a Positivstellensatz reasoning;  see Theorem~\ref{thm:kneseDD} below.  We take this bidisc realization as our starting point in Section~\ref{sec:bibi} (Theorem~\ref{thm:kneseDD} below) and derive from it, by finite matrix computations, realizations for the remaining three entries of Figure~\ref{fig:table1}.

A second objective of the paper is to develop symmetric versions of the realization theory, that is the cases when $F(z_1,z_2)= F(z_2,z_1)$. We show that the symmetry condition is equivalent to the existence of a Hermitian unitary matrix $U$ satisfying
\[
\begin{bmatrix}
U&\\
&I
\end{bmatrix}
\begin{bmatrix}
A&B\\
C&D
\end{bmatrix}
\begin{bmatrix}
U&\\
&I
\end{bmatrix}
=
\begin{bmatrix}
A&B\\
C&D
\end{bmatrix},
\qquad
UP_1U=P_2;
\]
see Section~\ref{sec:symmetric}. These results enable us to construct realizations on the symmetrized bihalfplane, paralleling the realization theory on the symmetrized bidisc developed in \cite{AY, BS} and pursued further in \cite{BW}; see Section~\ref{sec:bisym}.

Finally, in Section~\ref{sec:det} we turn the realization theory toward determinantal representations: writing a polynomial with no zeros in a given domain as
\[
p(z_1,z_2) = c\det(A+z_1B_1+z_2B_2)
\]
for suitable matrices $A,B_1,B_2$.  A representation of this kind is in close connection with system realization as
  the term $\det(A+z_1B_1+z_2B_2)$ is the denominator of the transfer-function realizations of Section~\ref{sec:bibi}.
 Recall that on the bihalfplane, Knese \cite{knese2019} obtained such a representation for stable polynomials, with $\operatorname{Im}A\ge0$ and $B_1,B_2\ge0$, $B_1+B_2=I$. 
Our aim is to  extend his result to symmetric stable polynomials and, consequently, to stable polynomials on the symmetrized bihalfplane. 

Notation. We let rational functions over $\mathbb K\in\{\mathbb C,\mathbb R\}$  be denoted by \(\mathbb{K}(z)\), which is the field of fractions of the polynomial ring \(\mathbb{K}[z]\) of one variable. Similar notation follows for bivariate case, we use $\mathbb K[z_1,z_2]$, and $\mathbb K(z_1,z_2)$, respectively. Further,   $\mathbb K^{m\times n}(z_1,z_2)$ stands for  the rational $m\times n$-matrix valued functions of two  variables.
By $\mathbb D$ we denote the (open) unit disc, and by $\RHP$ the (open) right halfplane.

\section{Realizations of the contractive and nonnegative real part functions on  the bidisc and bihalfplane}\label{sec:bibi}

In this section we provide results binding properties
of matrix-valued functions on the bihalfplane with
their system matrix properties, along  the program given by Table~\ref{fig:table1}. We begin 
with the following known  result. 

\begin{thm}
\label{thm:kneseDD}
Let $F\in  \Cmats{m}{n}(z_1,z_2)$ be rational matrix function. Then the following are equivalent:
\begin{enumerate}[\rm (i)]
    \item $F$ has no poles in  $\DD^2$ and 
$\norm{F(z_1,z_2)}\leq 1$ for all $z_1,z_2\in\DD$;
    \item there exists a contractive matrix
$T =\matr{A&B\\C&D}$ such
that for all $z_1,z_2\in \DD$
\begin{equation}\label{S}
F(z_1,z_2)= D+ C(z_1P_1+z_2P_2)\big(I - A(z_1P_1+z_2P_2) \big)^{-1}B
\end{equation}
where $P_1, P_2$ are complementary orthogonal projections.
\end{enumerate} 
Additionally, if $F\in\mathbb R^{m\times n}(z_1,z_2)$ then the matrices $A,B,C,D,P_1,P_2$ in {\rm (ii)} may be chosen as real matrices.
\end{thm}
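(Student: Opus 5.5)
The implication (ii)$\Rightarrow$(i) I will handle by the standard lurking-isometry / Schur complement computation. Put $Z=z_1P_1+z_2P_2$ for $z_1,z_2\in\DD$. Since $P_1,P_2$ are complementary orthogonal projections, $Z$ is normal with $\norm{Z}=\max(|z_1|,|z_2|)<1$. As $\norm{A}\le\norm{T}\le1$, the matrix $I-AZ$ is invertible, so $F$ has no poles in $\DD^2$. Given $u$, set $x=Z(I-AZ)^{-1}Bu$ and $y=Fu$. Then
\[
\matr{A&B\\C&D}\matr{x\\u}=\matr{(I-AZ)^{-1}Bu\\ y},
\]
and indeed $Ax+Bu=(I-AZ)^{-1}Bu$. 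Writing $w=(I-AZ)^{-1}Bu$, contractivity of $T$ gives $\norm{w}^2+\norm{y}^2\le\norm{Zw}^2+\norm{u}^2\le\norm{w}^2+\norm{u}^2$. Hence $\norm{y}\le\norm{u}$.

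For (i)$\Rightarrow$(ii) I will use the known Agler-type decomposition for rational inner/Schur functions on the bidisc, as in Kummert and Knese. The chain of steps is as follows.

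First, I embed $F$ into a rational inner function, i.e.\ one that is unitary on $\mathbb T^2$. The plan is to use a matrix Fejér–Riesz / spectral factorization on $\mathbb T^2$ to find a polynomial $G$ with $I-F^*F=G^*G$ on $\mathbb T^2$. This is exactly where the bivariate setting matters, since the two-variable operator Fejér–Riesz theorem of Dritschel–Woerdeman (or Geronimo–Woerdeman) is available, possibly after a perturbation $rF$ with $r<1$ followed by a limiting argument. Stacking $\matr{F\\G}$ yields an isometric column, which can be completed to a square rational inner function $\Phi$ having $F$ as a corner.

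Second, for a rational inner $\Phi=Q/q$ with $q$ stable, Kummert/Knese give finite-dimensional sums-of-squares decompositions
\[
I-\Phi(w)^*\Phi(z)=(1-\bar w_1z_1)E_1(w)^*E_1(z)+(1-\bar w_2z_2)E_2(w)^*E_2(z),
\]
with polynomial vector columns $E_j$ of bounded degree; this is the Positivstellensatz step. Rearranging the identity as
\[
\matr{E_1(w)\\E_2(w)\\I}^*\text{-weighted}\ \ \bar w_jE_j(w)^*\cdots+I=E_j(w)^*E_j(z)+\Phi(w)^*\Phi(z)
\]
gives an isometry $V:(z_1E_1(z)\oplus z_2E_2(z))u\oplus u\mapsto (E_1(z)\oplus E_2(z))u\oplus \Phi(z)u$, defined on a finite-dimensional span. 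I extend $V$ to a unitary (or at least a contraction) $\matr{A&B\\C&D}$ on the finite-dimensional space, and then solve for $E$ to get the realization \eqref{S} with $P_1,P_2$ the coordinate projections. Taking the corresponding corner of $\Phi$ recovers $F$, which only amounts to compressing $B,C,D$ and preserves contractivity.

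For the real case, I would note that $\bar F(\bar z)=F(z)$, so the decomposition can be averaged with its conjugate. Alternatively I would use the realification $\matr{\re X&-\operatorname{Im}X\\ \operatorname{Im}X&\re X}$ of the complex system matrix, which still yields $F$ when $F$ is real. I expect the main obstacle to be the first step: producing an \emph{exact}, finite-dimensional, polynomial factor $G$ without strictness assumptions, that is, handling boundary degeneracy where $\norm{F}=1$. Here I would lean directly on Knese's result as cited, which is why the paper treats the theorem as known.
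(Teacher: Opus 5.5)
Your proposal is correct and takes the same route as the paper. Direction (ii)$\Rightarrow$(i) is the standard computation the paper only cites, and your estimate $\|w\|^2+\|y\|^2\le\|Zw\|^2+\|u\|^2$ carries it out correctly; direction (i)$\Rightarrow$(ii) rests, as in the paper, on Knese's Theorem~1.3, whose matrix version needs Dritschel's two-variable factorization of nonnegative trigonometric polynomials (cf.\ Remark~\ref{Drem}); and the real case via the realification $X\mapsto X_{\mathbb R}$, compressed to a corner and extended from real points by uniqueness, is exactly the Appendix argument. Your outline of Knese's proof is only heuristic: $G$ must be rational with the denominator of $F$ rather than a polynomial, and the $rF$ limiting step would need degree bounds uniform in $r$. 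Since you ultimately defer to the cited result, as the paper does, this does not affect correctness.
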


\begin{proof}
The implication (ii)$\Rightarrow$(i) relies on a well known computation (and also works for $F\in\mathbb C^{m\times n}(z_1,z_2)$); see, for instance, \cite[Proof of Proposition 3.3]{baran2026contractive}. For (i)$\Rightarrow$(ii) 
we refer to \cite[Theorem 1.3]{knese2021}. 
For the statement on $F\in\mathbb R^{m\times n}[z_1,z_2]$ see Theorem~\ref{thm:4vreal} of the Appendix.
    \end{proof}

\begin{remark}\label{Drem}  \rm We note that the full matrix version of \cite[Theorem 1.3]{knese2021} relies on 
 \cite[Theorem 4.1]{Dritschel}, completed only recently  in \cite{dritschel2026}. 
\end{remark}

We will derive now the remaining three statements of Table~\ref{fig:table1}
from the above result.  First we consider contractive functions on the bihalfplane.

\begin{thm}\label{thm:contrRHP}
Let $F\in\Cmats{m}{n}(z_1,z_2)$ be a rational  matrix valued function. Then the following conditions are equivalent:
\begin{enumerate}[\rm (i)]
\item $F$ has no poles in $\RHPtwo$, and $F(z_1,z_2)$ is a contraction for any $z_1,z_2\in\RHP$;
\item $F$ admits a realization of the form
\begin{equation}\label{eq:realization}
F(z_1,z_2)= D+ C(z_1P_1+z_2P_2+A)^{-1}B,
\end{equation}
where the block matrix $\matr{A&B\\C&D} \in\Cmats{(k+m)}{(k+n)}$ is such that
\begin{equation}\label{eq:c1}
\matr{
A+A^*- C^*C & C^*D+ B \\ D^* C+ B^*  & I - D^* D
} \geq 0,
\end{equation}
$P_1,P_2\in \sqmat{k}$ are positive semidefinite and $P_1+P_2=I_k$.
\end{enumerate}
Moreover, if any of the above conditions holds, then
the limit $\lim_{t\to\infty}F(t,t)$ exists.
Additionally, if $F\in\mathbb R^{m\times n }(z_1,z_2)$ then the matrices $A,B,C,D,P_1,P_2$ in {\rm (ii)} may be chosen as real matrices.

\end{thm}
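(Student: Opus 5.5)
We derive Theorem~\ref{thm:contrRHP} from Theorem~\ref{thm:kneseDD} by a Cayley transform in the variables only. Let $w_j=\frac{z_j-1}{z_j+1}$, which maps $\RHP$ onto $\DD$, with inverse $z_j=\frac{1+w_j}{1-w_j}$.

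\textbf{(ii)$\Rightarrow$(i).} We argue directly. Put $P=P(z)=z_1P_1+z_2P_2$, $x=(P+A)^{-1}Bu$ and $y=Du+Cx$. Then $Bu=Px+Ax$, and
\begin{equation*}
\|u\|^2-\|y\|^2
=\matr{x\\u}^*\matr{A+A^*-C^*C & C^*D+B\\ D^*C+B^* & I-D^*D}\matr{x\\u}-2\re\langle Ax,x\rangle-2\re\langle Bu,x\rangle .
\end{equation*}
We need to check the sign bookkeeping here: the cross term should produce $-2\re\langle Bu,x\rangle$ plus the Gram terms, and after substituting $Bu=(P+A)x$ the right-hand side should reduce to the quadratic form plus $2\re\langle Px,x\rangle$. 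The correct sign arrangement will be verified when the computation is written out. Since $\re\langle Px,x\rangle=\re z_1\langle P_1x,x\rangle+\re z_2\langle P_2x,x\rangle\ge0$, this gives $\|y\|\le\|u\|$.

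Invertibility of $P+A$ on $\RHPtwo$ also follows from \eqref{eq:c1}. The $(1,1)$ block gives $A+A^*\ge C^*C\ge0$. Hence $\re(P+A)\ge \min(\re z_1,\re z_2)\,I>0$, so $P+A$ is invertible and $F$ has no poles in $\RHPtwo$.

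\textbf{(i)$\Rightarrow$(ii).} Define $G(w_1,w_2)=F(z_1,z_2)$. Then $G$ is rational, has no poles in $\DD^2$, and is contractive there. By Theorem~\ref{thm:kneseDD}, $G=\tilde D+\tilde C W(I-\tilde AW)^{-1}\tilde B$ with $W=w_1P_1+w_2P_2$, contractive $\tilde T$, and complementary projections $P_1,P_2$.

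Since $P_j$ are projections, $W=(Z-I)(Z+I)^{-1}$ with $Z=z_1P_1+z_2P_2$, because everything is block diagonal. Then
\begin{equation*}
I-\tilde AW=\bigl((Z+I)-\tilde A(Z-I)\bigr)(Z+I)^{-1}=\bigl((I-\tilde A)Z+(I+\tilde A)\bigr)(Z+I)^{-1}.
\end{equation*}

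The main obstacle is that $I-\tilde A$ may be singular, since $\tilde A$ is merely a contraction with possibly eigenvalue $1$. This corresponds to the behaviour of $F$ at infinity, and explains the claim that $\lim F(t,t)$ exists.

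I would first handle the generic case where $I-\tilde A$ is invertible. Set $A=(I-\tilde A)^{-1}(I+\tilde A)$; then $A+A^*\ge0$ because $\tilde A$ is a contraction. Expanding $W(I-\tilde AW)^{-1}=(Z-I)\bigl((I-\tilde A)Z+(I+\tilde A)\bigr)^{-1}$ and writing $Z-I=(Z+A)-(A+I)$ yields
\begin{equation*}
F=D+C(Z+A)^{-1}B,
\end{equation*}
with $D=\tilde D+\tilde C(I-\tilde A)^{-1}\tilde B$, $C$ a multiple of $\tilde C(I-\tilde A)^{-1}$, and $B$ a multiple of $(I-\tilde A)^{-1}\tilde B$. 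These are the standard Cayley formulas for a colligation. Contractivity of $\tilde T$ then translates, via the usual identity $I-\tilde T^*\tilde T\ge0$ conjugated by the Cayley transform, into \eqref{eq:c1}. This is a routine but careful computation.

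For the singular case I would try one of two routes:
\begin{enumerate}[\rm (a)]
\item Perturb: use $r\tilde T$ with $r<1$, which approximates $G$, and pass to a limit with bounded realizations. This is unclear in finite dimension.
\item Better: decompose $\ker(I-\tilde A)$. A contraction satisfies $\tilde Ax=x$ iff $\tilde A^*x=x$, and contractivity of $\tilde T$ forces $\tilde Cx=0$ and $\tilde B^*x=0$ for such $x$. I would then show $P_j$-compatibility, namely that this kernel can be taken to reduce $P_1,P_2$. If it does not, compress to $\ker(I-\tilde A)^\perp$ after checking that the $W$-coupling term vanishes. Expected difficulty: the kernel need not be invariant under $P_j$. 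Here I would instead precompose with a change of variables $w_j\mapsto\lambda w_j$ or use a Möbius shift to move the eigenvalue, or exploit that $P_1,P_2$ in (ii) of Theorem~\ref{thm:contrRHP} are only required to be positive semidefinite, not projections, which allows compressing $P_j$ to $QP_jQ$.
\end{enumerate}

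The existence of $\lim_{t\to\infty}F(t,t)=D$ follows from (ii), since $(tI+A)^{-1}\to0$. The real case follows because all transformations are real, together with the real part of Theorem~\ref{thm:kneseDD}.
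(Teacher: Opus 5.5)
\textbf{Summary.} The overall route is the paper's, and the case $1\notin\sigma(\tilde A)$ is correct. There is, however, a genuine gap in (i)$\Rightarrow$(ii) when $1\in\sigma(\tilde A)$, plus a sign error in (ii)$\Rightarrow$(i).

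\textbf{What is already correct.} You use the Cayley transform in the variables, apply Theorem~\ref{thm:kneseDD}, and convert the colligation. With $C=-\sqrt2\,\tilde C(I-\tilde A)^{-1}$ and $B=\sqrt2\,(I-\tilde A)^{-1}\tilde B$, your formulas coincide with the paper's. The ``routine'' step is $L=\tilde TR$ for
\[
R=\matr{\frac1{\sqrt2}(A+I)&\frac1{\sqrt2}B\\0&I},\qquad L=\matr{\frac1{\sqrt2}(A-I)&\frac1{\sqrt2}B\\-C&D},
\]
so the matrix in \eqref{eq:c1} equals $R^*R-L^*L=R^*(I-\tilde T^*\tilde T)R\ge0$.

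\textbf{The gap: the case $1\in\sigma(\tilde A)$.} This case cannot be excluded, since Theorem~\ref{thm:kneseDD} gives no control over $\tilde A$, and you leave it open. Most of your candidate fixes do not work.
\begin{enumerate}[(1)]
\item $\ker(I-\tilde A)$ need not reduce $P_1,P_2$.
\item A rescaling or M\"obius change of variables either realizes a different function or reproduces the same pencil $(Z+I)-\tilde A(Z-I)$. The reason is that \eqref{eq:realization} is anchored at $z=\infty$, which corresponds to $w=1$.
\item The coupling does not vanish in the $W$-form, because $I-\tilde AW$ is not block triangular there. For example, take $\tilde A=\operatorname{diag}(1,0)$, $P_1$ the projection onto $\frac1{\sqrt2}(1,1)^\top$, $\tilde B=\matr{0\\b}$, $\tilde C=\matr{0&c}$, with $\matr{0&b\\c&\tilde D}$ contractive. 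Then $G=\tilde D+cb\,\frac{s-w_1w_2}{1-s}$ with $s=\frac{w_1+w_2}2$, not the compressed $\tilde D+cb\,s$.
\end{enumerate}

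\textbf{The missing argument (the paper's).} Since $\tilde T$ is contractive, $\cH_1=\ker(I-\tilde A)$ reduces $\tilde T$. So, with respect to $\cH_1\oplus\cH_1^\perp$, we have $\tilde A=I\oplus A'$, $\tilde B=\matr{0\\B'}$ and $\tilde C=\matr{0&C'}$.

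First pass to the $Z$-form on the whole space:
\[
F=\tilde D+\tilde C(Z-I)\bigl((I-\tilde A)Z+I+\tilde A\bigr)^{-1}\tilde B .
\]
This is the only place where the projection property of $P_j$ is needed. Writing $Z_{ij}$ for the blocks of $Z$,
\[
(I-\tilde A)Z+I+\tilde A=\matr{2I&0\\(I-A')Z_{21}&(I-A')Z_{22}+I+A'}
\]
is block lower triangular. Combined with the shape of $\tilde B$ and $\tilde C$, this gives
\[
F=\tilde D+C'(Z_{22}-I)\bigl((I-A')Z_{22}+I+A'\bigr)^{-1}B',\qquad Z_{22}=z_1QP_1Q+z_2QP_2Q,
\]
where $Q$ is the orthogonal projection onto $\cH_1^\perp$. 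The compressions $QP_jQ$ are positive semidefinite with sum $I$ but are no longer projections. That is exactly why (ii) only requires this.

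From here your generic computation applies verbatim to $T'=\matr{A'&B'\\C'&\tilde D}$. You must start from this $Z$-form rather than from $W=(Z-I)(Z+I)^{-1}$. The computation only uses invertibility of $Z_{22}+A$.

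\textbf{The sign error in (ii)$\Rightarrow$(i).} Your displayed identity is false as written. With $\matr{x\\u}$ the off-diagonal blocks enter with the wrong sign, and the identity is off by $4\re\langle Du,Cx\rangle$. The correct identity uses $\matr{-x\\u}$:
\[
\|u\|^2-\|y\|^2=\matr{-x\\u}^*\matr{A+A^*-C^*C&C^*D+B\\D^*C+B^*&I-D^*D}\matr{-x\\u}+2\re\langle Px,x\rangle .
\]
This is the paper's factorization of $I-F^*F$ in vector form.

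Your invertibility argument for $P+A$, via $A+A^*\ge C^*C$, is correct. It makes explicit a point the paper leaves implicit.
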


\begin{proof}
We start with showing the `moreover' part. Assuming (ii), the existence of the limit is straightforward, since $\lim_{t\to\infty}(tP_1+tP_2+A)^{-1}=
\lim_{t\to\infty} \frac1t(I+\frac1t A)^{-1} =0$. The case $F\in\mathbb R^{m\times n}[z_1,z_2]$ is deferred to the Appendix, Theorem~\ref{thm:4vreal}.

(ii)$\Rightarrow$(i)
Fix $z=(z_1,z_2)\in \RHP$. We will show that $F(z_1,z_2)$ is a contraction.
From \eqref{eq:c1} we infer that there exist matrices $L\in\Cmats{r}{k}$ and
$M\in\Cmats{r}{n}$ (for some $r\geq 1$) such that

\[ \matr{
A+A^*- C^*C & C^*D+ B \\ D^* C+ B^*  & I - D^* D
} = \matr{L&M}^*\matr{L & M} =\matr{L^*L  & L^*M\\M^*L & M^*M} .\]
Denote $\Delta = z_1P_1+z_2P_2+A$ and observe  that $\Delta+\Delta^* = 2\re(z_1)P_1+2\re(z_2)P_2+2\re(A)=2\re(z_1)P_1+2\re(z_2)P_2+C^*C+L^*L$. Then
\begin{align*}
I-F(z)^*F(z) =\, & I - D^*D- D^*C\Delta^{-1}B- B^*\Delta^{-1}C^* D
 - B^*\Delta^{* -1}C^*C\Delta^{-1}B\\
=\,& M^*M -(M^*L-B^*)\Delta^{-1}B - B^*\Delta^{* -1}(L^*M-B)\\
&\quad +B^*\Delta^{* -1}(L^*L+\re(z_1)P_1+\re(z_2)P_2-\Delta-\Delta^*)\Delta^{-1}B
\\
 =\,&  (M^*-B^*\Delta^{* -1}L^*)(M-L\Delta^{-1}B)\\
& +B^*\Delta^{*-1}\left[\Delta^*+\Delta+\re(z_1)P_1+\re(z_2)P_2-\Delta-\Delta^*
\right]\Delta^{-1}B\\
=\, & (\Delta^{-1}B)^*\big[\re(z_1)P_1+\re(z_2)P_2\big]\Delta^{-1}B\\
&\quad+(M-L\Delta^{-1}B)^*(M-L\Delta^{-1}B).
\end{align*}
Since all summands in the final expression are positive semidefinite,
we have indeed proven that $F(z)$ is a contraction.

(i)$\Rightarrow$(ii)
For $w_1,w_2\in \DD$ define $G(w_1,w_2):=F\big(\frac{1+w_1}{1-w_1},\frac{1+w_2}{1-w_2}\big)$.
Then $G:\DD^2\to \Cmats{m}{n}$ and $\norm{G(w)}\leq 1$ for all $w\in\DD^2$.
By the virtue of Theorem~\ref{thm:kneseDD} we obtain a realization of $G$ as
\begin{equation}
G(w_1,w_2)=D_0+C_0(w_1\tilde P_1+w_2\tilde P_2)\big(I-A_0(w_1\tilde P_1+w_2\tilde P_2)
\big)^{-1}B_0,
\end{equation}
where matrix $T=\matr{A_0&B_0\\C_0& D_0}\in \Cmats{(k+m)}{(k+n)}$ is a contraction and
$\tilde P_1$ and $\tilde P_2$ are complementary orthogonal projections. In particular, $\tilde P_1\tilde P_2=\tilde P_2\tilde P_1=0$.

Decomposing $\CC^k = \cH_1\oplus
\cH_A$ into orthogonal subspaces,
where $\cH_1$ 
is the eigenspace
of $A_0$ for eigenvalue $1$ (as $T$ is a contractive, the eigenspace of $A_0$ at $1$ is a reducing subspace), 
we can see that $T$ can be chosen to take the form
$T=\matr{I&0&0\\0& \tilde A& \tilde C\\ 0& \tilde B& D_0}$,
where  $(I-\tilde A)$ is invertible.
If $1\notin \sigma(A_0)$, we simply have $\cH_1=\{0\}$. 

For $z_1,z_2\in\RHP$ we have
\begin{align*}
F(z_1,z_2)=\,&G\left(\frac{z_1-1}{z_1+1},\frac{z_2-1}{z_2+1}\right)\\
=\,& D_0+C_0\left({\textstyle\frac{z_1-1}{z_1+1}}\tilde P_1
+\textstyle{\frac{z_2-1}{z_2+1}}\tilde P_2\right)
\left(I-A_0({\textstyle\frac{z_1-1}{z_1+1}}\tilde P_1+
{\textstyle\frac{z_2-1}{z_2+1}}\tilde P_2)\right)^{-1}B_0\\
=\,& D_0+C_0 (z_1\tilde P_1+z_2\tilde P_2-I)
\big( (I-A_0)(z_1\tilde P_1+z_2\tilde P_2-I)+2I\big)^{-1}B_0\\
=\,& D_0+\tilde C\left(I-2(I-\tilde A)^{-1}
    \left(z_1 P_1+z_2 P_2-I+2(I-\tilde A)^{-1}\right)^{-1}\right)
    (I-\tilde A)^{-1}\tilde B\\
=\,& D_0{+}\tilde C(I{-}\tilde A)^{-1}\!\tilde B
-2\tilde C(I{-}\tilde A)^{-1}\!
    \left(z_1 P_1{+}z_2 P_2+(I{+}\tilde A)(I{-}\tilde A)^{-1}\right)^{-1}\!
    (I{-}\tilde A)^{-1}\tilde B,
\end{align*}
where $P_j$ is the compression of $\tilde P_j$ to the subspace $\cH_A$ ($j=1,2$).
If we put
\begin{equation}
\begin{array}{ll}
A:=(I+\tilde A)(I-\tilde A)^{-1},&
B:=\sqrt2 (I -\tilde A)^{-1}\tilde B,\\
C:=-\sqrt2 \tilde C (I - \tilde A)^{-1},&
D:= D_0+\tilde C(I - \tilde A)^{-1}\tilde B,
\end{array}
\end{equation}
the formula \eqref{eq:realization} and the assumptions on $P_1$ and $P_2$ hold.

In order to prove \eqref{eq:c1} we introduce two more block matrices
\[ R := \matr{ \frac1{\sqrt2}(A+I) &  \frac1{\sqrt2}B\\0 & I },\quad
L:= \matr{\frac1{\sqrt2}(A-I)  &  \frac1{\sqrt2}B\\-C & D}. \]
If we denote the contractive matrix $\matr{\tilde A&\tilde B\\\tilde C& D_0}$
by $\tilde T$, we have $L=\tilde T R$ and therefore
\[
\matr{
A+A^*- C^*C & C^*D+ B \\ D^* C+ B^*  & I - D^* D
}= R^*R-L^*L 
= R^*(I-\tilde T^*\tilde T)R \geq 0.
\]
This ends the proof.
\end{proof}

Now let us turn to  functions having nonnegative real part  on the
bihalfplane. Recall that their univariate analogue is called frequently `positive real' in  theory of linear systems and univariate transfer functions. The  condition on the system matrix appearing below is
a special case ($Q=I$) of Kalman-Yakubovich-Popov (KYP) inequality, widely used in the univariate case.  

\begin{thm}\label{thm:posReRHP}
Let $F\in\mathbb C^{n\times n}(z_1,z_2)$ be a rational matrix valued function. Then the following conditions are equivalent:
\begin{enumerate}[\rm (i)]
\item {$F$ has no poles in $\RHPtwo$},  $F(z_1,z_2)$ has nonnegative real part
$($i.e.\ $F(z_1,z_2)^*+F(z_1,z_2)\geq 0)$ for any $z_1,z_2\in\RHP$,
and $\lim_{t\to\infty} F(t,t)$ exists;
\item $F$ admits a realization of the form
\begin{equation}\label{eq:realization2} 
F(z_1,z_2)= D+ C(z_1P_1+z_2P_2+A)^{-1}B,
\end{equation}
where the block matrix $\matr{A&B\\C&D} \in\Cmats{(k+n)}{(k+n)}$ is such that
\begin{equation}\label{eq:kyp0}
\matr{
A+A^*  & C^*- B \\ C- B^* & D^*+ D
} \geq 0,
\end{equation}
$P_1,P_2\in \sqmat{k}$ are positive semidefinite and $P_1+P_2=I_k$.
\end{enumerate}
Additionally, if $F\in\mathbb R^{n\times n}(z_1,z_2)$ then the matrices $A,B,C,D,P_1,P_2$ in {\rm (ii)} may be chosen as real matrices.
\end{thm}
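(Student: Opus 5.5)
We plan to reduce Theorem~\ref{thm:posReRHP} to Theorem~\ref{thm:contrRHP} by a matrix Cayley transform in the values of $F$, mirroring how Theorem~\ref{thm:contrRHP} was obtained from Theorem~\ref{thm:kneseDD}.

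\emph{(ii)$\Rightarrow$(i).} This is a direct computation. Put $\Delta=z_1P_1+z_2P_2+A$ for $z\in\RHPtwo$. Then $\Delta+\Delta^*=2\re(z_1)P_1+2\re(z_2)P_2+(A+A^*)$, and $\Delta$ is invertible because $\re\Delta$ is positive definite: $A+A^*\ge0$ and $\re(z_1)P_1+\re(z_2)P_2\ge\min_j\re(z_j)\,I$. Factor \eqref{eq:kyp0} as $\matr{L&M}^*\matr{L&M}$ and set $X=\Delta^{-1}B$. Expanding $F+F^*=D+D^*+C\Delta^{-1}B+B^*\Delta^{*-1}C^*$ and substituting $C^*=B+L^*M$ together with $B=\Delta X$, we aim for the identity
\[
F(z)+F(z)^*=(M-LX)^*(M-LX)+2X^*\big[\re(z_1)P_1+\re(z_2)P_2\big]X ,
\]
which is $\ge0$. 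The limit $\lim_{t\to\infty}F(t,t)=D$ exists exactly as in the `moreover' part of Theorem~\ref{thm:contrRHP}.

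\emph{(i)$\Rightarrow$(ii).} Since $\re F(z)\ge0$, the matrix $F(z)+I$ is invertible on $\RHPtwo$. Define $G=(F-I)(F+I)^{-1}$; this is rational, has no poles in $\RHPtwo$, and is contractive there. Theorem~\ref{thm:contrRHP} then gives
\[
G=D_0+C_0(z_1P_1+z_2P_2+A_0)^{-1}B_0
\]
with \eqref{eq:c1} holding for $(A_0,B_0,C_0,D_0)$. We have $D_0=\lim_t G(t,t)$, and since $F_\infty:=\lim_tF(t,t)$ exists with $\re F_\infty\ge0$, we get $I-D_0=2(F_\infty+I)^{-1}$, which is invertible. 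This is exactly where the limit hypothesis in (i) is used. Put $E=(I-D_0)^{-1}$. By the standard Schur-complement inversion,
\[
(I-G)^{-1}=E+EC_0\big(z_1P_1+z_2P_2+A_0-B_0EC_0\big)^{-1}B_0E .
\]
Using $F=-I+2(I-G)^{-1}$, we set
\[
A=A_0-B_0EC_0,\quad B=\sqrt2\,B_0E,\quad C=\sqrt2\,EC_0,\quad D=2E-I ,
\]
which yields the realization \eqref{eq:realization2} with the same $P_1,P_2$.

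The main obstacle is checking \eqref{eq:kyp0} for these new matrices. Our plan is to exhibit an invertible block matrix $R$ (built from $I$, $E$, $C_0$, for instance $R=\matr{I&0\\ -\tfrac1{\sqrt2}C_0 & \tfrac1{\sqrt2}I}$ up to factors of $E$) such that the KYP matrix in \eqref{eq:kyp0} equals $R^*\,K\,R$, where $K$ is the matrix in \eqref{eq:c1} for $(A_0,B_0,C_0,D_0)$. Then \eqref{eq:kyp0} follows from $K\ge0$. The entries of $R$ would be fixed by matching the $(2,2)$ block, where one needs $D+D^*=2E^*(I-D_0^*D_0)E$ up to congruence, and then the off-diagonal blocks. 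If this direct guess fails, the fallback is to run the same $L=\tilde TR$ trick as in the proof of Theorem~\ref{thm:contrRHP}, now with the Cayley transform acting on the output variables.

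For real $F$, every step (the Cayley transform, Theorem~\ref{thm:contrRHP} with real data, and the formulas for $A,B,C,D$) preserves realness, so the real case follows.
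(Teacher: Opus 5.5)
Your reduction matches the paper's. For (ii)$\Rightarrow$(i) you factor the KYP matrix. For (i)$\Rightarrow$(ii) you pass to $G=(F-I)(F+I)^{-1}$, apply Theorem~\ref{thm:contrRHP}, and use the Woodbury identity, arriving at exactly the paper's $A,B,C,D$. Your derivation $I-D_0=2(F_\infty+I)^{-1}$ is correct and is exactly where the limit hypothesis enters. The real case via the real part of Theorem~\ref{thm:contrRHP} is also fine.

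One small slip: since $C^*=B+L^*M$, the identity in (ii)$\Rightarrow$(i) is $F+F^*=(M+LX)^*(M+LX)+2X^*[\re(z_1)P_1+\re(z_2)P_2]X$. The minus sign belongs to the contractive case. Positivity is unaffected.

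The genuine gap is the step you yourself call the main obstacle: \eqref{eq:kyp0} for the new matrices is never established. The $R$ you write down does not work as stated, and ``if this guess fails, use a fallback'' is a plan, not a proof. So (i)$\Rightarrow$(ii) is incomplete at its only nontrivial point.

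Your instinct about the shape of $R$ is right, and the congruence you were looking for exists. Let $E=(I-D_0)^{-1}$ and let $K$ be the matrix of \eqref{eq:c1} for $(A_0,B_0,C_0,D_0)$. Put
\[
R=\begin{bmatrix} I & 0\\ -EC_0 & -\sqrt2\,E\end{bmatrix};\qquad\text{then}\qquad \matr{A+A^*&C^*-B\\C-B^*&D+D^*}=R^*KR
\]
exactly. Only $(I-D_0)E=I$ and $E^*(I-D_0^*D_0)E=E+E^*-I$ are needed:
\begin{enumerate}[(a)]
\item The $(2,2)$ block is $2E^*(I-D_0^*D_0)E=2(E+E^*-I)=D+D^*$.
\item The $(1,2)$ block is $-\sqrt2(C_0^*D_0+B_0)E+\sqrt2C_0^*(E+E^*-I)=\sqrt2(C_0^*E^*-B_0E)=C^*-B$.
\item In the $(1,1)$ block the coefficient of $C_0^*(\cdot)C_0$ is $-I-D_0E-E^*D_0^*+E+E^*-I=0$. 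What remains is $A_0+A_0^*-B_0EC_0-C_0^*E^*B_0^*=A+A^*$.
\end{enumerate}
Conceptually, $\sqrt2R$ is the substitution $(x,u)\mapsto(\sqrt2x,-v)$ with $v=E(2u+\sqrt2C_0x)$. With $y=Cx+Du$ and $w=\sqrt2C_0x+D_0v$ one has $v=y+u$ and $w=y-u$, so $|v|^2-|w|^2=4\re(u^*y)$. This is why the contractive and positive-real dissipation forms match.

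Completed this way, your route is cleaner than the paper's. The paper conjugates by $I\oplus\frac1{\sqrt2}(I-\tilde D)$. It then compares the Schur complements of \eqref{eq9} and \eqref{eq:c1} when $\|\tilde D\|<1$, using several Woodbury-type identities. It reaches the case $\|\tilde D\|=1$ by replacing $(\tilde B,\tilde D)$ with $(r\tilde B,r\tilde D)$ and letting $r\to1$. Your single congruence needs neither the case split nor the limiting argument. It is the exact analogue of the $L=\tilde TR$ trick in Theorem~\ref{thm:contrRHP}.
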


\begin{proof}
(ii)$\Rightarrow$(i)
The fact that $\lim_{t\to\infty} F(t,t)=D$ follows the same argument
as in the proof of Theorem~\ref{thm:contrRHP}.

Fix $z=(z_1,z_2)\in \RHP$. We will show that $F(z)^*+F(z)\geq 0$.
From \eqref{eq:kyp0} we infer that there exist matrices $L\in\Cmats{r}{k}$ and
$M\in\Cmats{r}{n}$ (for some $r\geq 1$) such that 
\[ \matr{
A+A^*  & C^*- B \\ C- B^* & D^*+ D
} = \matr{L&M}^*\matr{L & M} =\matr{L^*L  & L^*M\\M^*L & M^*M} .\]
Denote $\Delta = z_1P_1+z_2P_2+A$ and observe  that $\Delta+\Delta^* = 2\re(z_1)P_1+2\re(z_2)P_2+2\re(A)=2\re(z_1)P_1+2\re(z_2)P_2+L^*L$. Then
\begin{align*}
F(z)^*{+}F(z) =\, & D^*+D+C\Delta^{-1}B+B^*\Delta^{-1}C^*\\
=\,& M^*M+ (M^*L+B^*)\Delta^{-1}B + B^*\Delta^{* -1}(L^*M+B)\\
 =\,&  (M^*+B^*\Delta^{* -1}L^*)(M+L\Delta^{-1}B)+
 B^*(\Delta^{*-1}{+}\Delta^{-1}-\Delta^{* -1}L^* L\Delta^{-1}
 )B \\
=\, & (\Delta^{-1}B)^*\big[\re(z_1)P_1+\re(z_2)P_2\big]\Delta^{-1}B 
+(M+L\Delta^{-1}B)^*(M+L\Delta^{-1}B).
\end{align*}
Since all summands in the final expression are positive semidefinite,
we have indeed proven that $F(z)$ has nonnegative real part.

\noindent (i)$\Rightarrow$(ii)
Define $G(z):=(F(z)-I)(F(z)+I)^{-1}$ for $z\in\RHPtwo$. Since $\re F(z)\geq 0$
this is a well defined rational function on $\RHPtwo$
attaining contractive values {(scalars in $\DD$)}.
According to Theorem~\ref{thm:contrRHP}, there exist realization of $G$ in the form
\[ G(z_1,z_2)= \tilde D+ \tilde C\big(z_1P_1+z_2P_2+\tilde A\big)^{-1} \tilde B.\]

Then $G(z)= -2I(F(z)+I)^{-1} + I$, so $(G(z)-I)(F(z)+I) =2I$. Since
$\tilde D=\lim_{t\to\infty}G(t,t)$, taking $z=(t,t)$ and letting $t\to\infty$
we find that $\tilde D-I$ is invertible since $(\tilde D-I)(\tilde D+I)=2I$.
From the definition of $G$ we derive
\[ F(z_1,z_2)=-2(G(z_1,z_2)-I)^{-1}-I
= -I+2\left( I-\tilde D- \tilde C(z_1P_1+z_2P_2+\tilde A)^{-1}\tilde B\right)^{-1}\]
and using the Woodbury matrix identity
$(U-QR^{-1}S)^{-1}=U^{-1}+U^{-1}Q(R-SU^{-1}Q)^{-1}SU^{-1}$
(see e.g.\ \cite{Gut1946}) we obtain the realization formula
\[ F(z_1,z_2)= D + C (z_1P_1+z_2P_2+A)^{-1}B,\]
where \newcommand{\ImD}[1][]{(I-\tilde D #1)^{-1}}
\[\begin{array}{ll}A:=\tilde A-\tilde B\ImD\tilde C,&
B:= \sqrt 2\tilde B\ImD,\\
C:=\sqrt 2\ImD\tilde C,&
D:=2\ImD-I = (I+\tilde D)\ImD.\end{array}\]
It remains to show inequality \eqref{eq:kyp0}. Multiplying by
$I \oplus \frac{I-\tilde D^*}{\sqrt2}$ and $I \oplus \frac{I-\tilde D}{\sqrt2}$
from the left and right side respectively, we obtain an equivalent inequality
\begin{equation}\label{eq9}
\matr{ \tilde A+\tilde A^*- \tilde B(I{-}\tilde D)^{-1}\tilde C-
    \tilde C^*(I{-}\tilde D^*)^{-1}\tilde B^* & \tilde C^*(I{-}\tilde D^*)^{-1}(I{-}\tilde D)- B \\ (I{-}\tilde D^*)(I{-}\tilde D)^{-1} \tilde C- \tilde B^*
 & I - \tilde D^* \tilde D
} \ge 0.
\end{equation}

Let us first assume that $\| \tilde D\|<1.$ In that case we can take a Schur complement in \eqref{eq9} and obtain 
\begin{equation}\label{eq:shurT}
\begin{array}{c}
\tilde A+\tilde A^*- \tilde B(I-\tilde D)^{-1}\tilde C-\tilde C^*(I-\tilde D^*)^{-1}\tilde B^* -  \\
\big(\tilde C^*(I-\tilde D^*)^{-1}(I-\tilde D)- B\big)(I-\tilde D^*\tilde D)^{-1}\big((I-\tilde D^*)(I-\tilde D)^{-1}\tilde C- \tilde B^*\big).\end{array}\end{equation}
Taking the Schur complement in \eqref{eq:c1} we obtain
\begin{equation}\label{eq:shur2}
\tilde A+\tilde A^*- \tilde C^*\tilde C - (\tilde C^*\tilde D+ \tilde B)(I-\tilde D^* \tilde D)^{-1}(\tilde D^*\tilde C+\tilde B^*).
\end{equation} 
Upon expanding \eqref{eq:shurT} and \eqref{eq:shur2} we observe
their equality, thanks to the following identities:
\begin{align*}
\tilde C^*(I-\tilde D^*)^{-1}&(I-\tilde D) 
(I-\tilde D^*\tilde D)^{-1}(I-\tilde D^*)^{-1}(I-\tilde D)\tilde C
= \tilde C^*(I-\tilde D\tilde D^*)^{-1}\tilde C\tag{*}\label{eq:t5*}\\
&= \tilde C^*\tilde C+\tilde C^*\tilde D(I-\tilde D^*\tilde D)^{-1}\tilde D^*\tilde C,
\\
\tilde B(I-\tilde D)^{-1}\tilde C-& 
\tilde B(I-\tilde D^*\tilde D)^{-1}(I-\tilde D^*)(I-\tilde D)^{-1}\tilde C
\tag{**}\\
&=\tilde B\big(I-(I-\tilde D^*\tilde D)^{-1}(I-\tilde D^*)\big)(I-\tilde D)^{-1}\tilde C
= \tilde B(I-\tilde D^*\tilde D)^{-1}\tilde D^*\tilde C.
\end{align*}

The proof of equality \eqref{eq:t5*} relies on
the  Woodbury matrix identity and its particular consequence
$D(I-D^*D)^{-1}=(I-DD^*)^{-1}D$.
Having the equality of the Schur complements we infer  
from \eqref{eq:c1} that \eqref{eq9} holds when $\|\tilde D \|<1$.

Finally, if $\|\tilde D\| =1$ we can replace $\tilde B$ and $\tilde D$ by $r\tilde B$ and $r\tilde D$, where $r< 1$. For such $\tilde  B$ and $\tilde D$ condition \eqref{eq:c1} is satisfied and therefore
\eqref{eq9} holds. Subsequently, let $r\to 1$ from below to obtain \eqref{eq9}.

The case $F\in\mathbb R^{n\times n}[z_1,z_2]$ is deferred to the Appendix,  Theorem~\ref{thm:4vreal}.
\end{proof}

Finally, we present a result on   functions having nonnegative real part on the bidisc.

\begin{thm}\label{thm:DDposRe}
Let $F\in{\mathbb C}^{n\times n}(z_1,z_2)$ be a rational function. Then the following conditions are equivalent:
\begin{enumerate}[\rm (i)]
\item $F$ has no poles in $\mathbb D^2$, and the matrix $F(z_1,z_2)$ has nonnegative real part for any $z_1,z_2\in\DD$;
\item $F$ admits a realization of the form
\begin{equation}\label{eq:rDisc}
F(z_1,z_2)= D+ C(z_1P_1+z_2P_2)\big(I - A(z_1P_1+z_2P_2) \big)^{-1}B
\end{equation}
where the block matrix $\matr{A&B\\C&D} \in\Cmats{(k+n)}{(k+n)}$ is such that
\begin{equation}\label{eq:c6}
\matr{
I - A^* A & -A^* B+ C^* \\ -B^*A+ C & D+D^*- B^*B
} \geq 0,
\end{equation}
$P_1,P_2\in \sqmat{k}$ are positive semidefinite and $P_1+P_2=I_k$.
\end{enumerate}
Additionally, if $F\in\mathbb R^{n\times n}(z_1,z_2)$ then the matrices $A,B,C,D,P_1,P_2$ in {\rm (ii)} may be chosen as real matrices.
\end{thm}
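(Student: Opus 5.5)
I will follow the same scheme as in Theorems~\ref{thm:contrRHP} and~\ref{thm:posReRHP}. For (ii)$\Rightarrow$(i) I will use a direct ``sum of squares'' computation. For (i)$\Rightarrow$(ii) I will use a Cayley transform reducing the problem to Theorem~\ref{thm:kneseDD}, followed by a congruence argument for the matrix inequality.

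\emph{(ii)$\Rightarrow$(i).} Fix $z\in\DD^2$ and put $Z=z_1P_1+z_2P_2$. Since $P_1,P_2\ge0$ and $P_1+P_2=I$, we have $I-Z^*Z\ge (1-\max|z_j|^2)I>0$, and in particular $\|Z\|<1$. From \eqref{eq:c6} the block $I-A^*A$ is nonnegative, so $\|A\|\le1$, $\|AZ\|<1$, and $I-AZ$ is invertible; this settles analyticity on $\DD^2$. I factor \eqref{eq:c6} as $\matr{L&M}^*\matr{L&M}$ and write $x=Z(I-AZ)^{-1}B$. The state then satisfies $x = Z(Ax+B)$, so $F=D+Cx$. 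I will expand $F^*+F$ and add and subtract $(Ax+B)^*(Ax+B)$ and $x^*x$, aiming for an identity of the form
\[
F^*+F=(Ax+B)^*(I-Z^*Z)(Ax+B)+\big(L x+M\big)^*\big(Lx+M\big)\ge0 .
\]
This is the direct analogue of the identities in the earlier proofs. Checking it is routine bookkeeping using $x=Z(Ax+B)$ and the block entries $L^*L=I-A^*A$, $L^*M=C^*-A^*B$, $M^*M=D+D^*-B^*B$.

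\emph{(i)$\Rightarrow$(ii).} Let $G=(F-I)(F+I)^{-1}$. On $\DD^2$, $F+I$ is invertible because $\re F\ge0$, and $G$ is contractive there. Theorem~\ref{thm:kneseDD} gives a contractive $\tilde T=\matr{\tilde A&\tilde B\\\tilde C&\tilde D}$ realizing $G$ in the form \eqref{S}. Here $\tilde D=G(0,0)$, and $I-\tilde D=2(F(0,0)+I)^{-1}$ is invertible. I then invert $F=-I-2(G-I)^{-1}=-I+2(I-\tilde D-\tilde C Z(I-\tilde AZ)^{-1}\tilde B)^{-1}$ with the Woodbury identity. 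Writing $K=(I-\tilde D)^{-1}$, this yields \eqref{eq:rDisc} with
\[
A=\tilde A+\tilde BK\tilde C,\quad B=\sqrt2\,\tilde BK,\quad C=\sqrt2\,K\tilde C,\quad D=(I+\tilde D)K ,
\]
where the $\sqrt2$ splitting is chosen to symmetrize. The same $P_1,P_2$ are kept.

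\emph{The inequality \eqref{eq:c6}.} I will exhibit it as a congruence of $I-\tilde T^*\tilde T$, mirroring the $R,L$ trick from the proof of Theorem~\ref{thm:contrRHP}. Concretely, I look for an invertible $R$ and a second matrix $L'$ with $L'=\tilde T R$ such that $\eqref{eq:c6}=R^*R-L'^*L'=R^*(I-\tilde T^*\tilde T)R$. A natural guess comes from the state equations: the original system has $(x,u)\mapsto(Ax+Bu, Cx+Du)$, while the $G$-system has input $v=(F+I)u/\sqrt2$ and output $(F-I)u/\sqrt2$. This suggests
\[
R=\matr{I&0\\ \tfrac1{\sqrt2}C&\tfrac1{\sqrt2}(D+I)},\qquad L'=\matr{A&B\\ \tfrac1{\sqrt2}C&\tfrac1{\sqrt2}(D-I)} .
\]
Verifying $L'=\tilde TR$ uses the formulas above. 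The difference of $R^*R-L'^*L'$ then gives blocks $I-A^*A$, $C^*-A^*B$ and $\tfrac12\big((D+I)^*(D+I)-(D-I)^*(D-I)\big)-B^*B=D+D^*-B^*B$, which is exactly \eqref{eq:c6}.

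The main obstacle I expect is getting the signs and the scaling in $R$ and $L'$ exactly consistent with the Woodbury-derived $A,B,C,D$. If direct matching fails, I would fall back on the Schur complement comparison used in Theorem~\ref{thm:posReRHP}, first under the assumption $\|\tilde D\|<1$ and then passing to the limit with $r\tilde B, r\tilde D$. The real case is deferred to the Appendix, Theorem~\ref{thm:4vreal}, as before.
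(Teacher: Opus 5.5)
Your proposal is correct and is essentially the paper's proof: the same sum-of-squares identity for (ii)$\Rightarrow$(i) (your $y=Ax+B$ is just $(I-AZ)^{-1}B$), the same Cayley transform, Woodbury step and formulas for $A,B,C,D$, and a congruence of $I-\tilde T^*\tilde T$ for \eqref{eq:c6}. The paper conjugates \eqref{eq:c6} by $I\oplus\frac{I-\tilde D}{\sqrt2}$, which turns your $R$ into $\matr{I&0\\(I-\tilde D)^{-1}\tilde C&I}$; your $R,L'$ do satisfy $L'=\tilde TR$ exactly, so no fallback is needed, and your remark $I-\tilde D=2(F(0,0)+I)^{-1}$ cleanly replaces the paper's reduction for the case $1\in\sigma(\tilde D)$, which in fact cannot occur.
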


\begin{proof}
    (ii) $\Rightarrow$ (i): For convenience, let us write $Z=z_1P_1+z_2P_2$, which is a contraction when $(z_1,z_2)\in {\mathbb D}^2$. In addition, write
    \begin{equation}\label{eq:c66}
\matr{
I - A^* A & -A^* B+ C^* \\ -B^*A+ C & D+D^*- B^*B
} = \matr{ L^* \\ M^* } \matr{ L & M }. 
\end{equation}
Now 
$$ F(z)+F(z)^* = D+D^* + C(I-ZA)^{-1}ZB + B^*Z^*(I-A^*Z^*)^{-1}C^*.$$ Using $D+D^*= M^*M+B^*B$, $C=B^*A+M^*L$, and rules like $(I-ZA)^{-1}Z=Z(I-AZ)^{-1}$, we get
$$F(z)+F(z)^*= M^*M+B^*B + (B^*A+M^*L)Z(I-AZ)^{-1}B+ B^*(I-AZ)^{*-1}Z^*(A^*B+L^*M)$$
$$ = (M^* + B^*(I-AZ)^{*-1}Z^*L^*)(M+LZ(I-AZ)^{-1}B) + $$
$$ B^*(I-AZ)^{*-1}[(I-Z^*A^*)(I-AZ) + (I-Z^*A^*)AZ + Z^*A^*(I-AZ)-Z^*L^*LZ](I-AZ)^{-1}B$$
$$\ge B^*(I-AZ)^{*-1} [I-Z^*Z](I-AZ)^{-1}B \ge 0$$
for $(z_1,z_2)\in {\mathbb D}^2$, where we used that $L^*L=I-A^*A$.

(i) $\Rightarrow$ (ii):
Define $G(z):=(F(z)-I)(F(z)+I)^{-1}$ for $z\in{\mathbb D}^2$. Since $\re F(z)\geq 0$
this is a well defined rational function on ${\mathbb D}^2$ attaining contractive values.
According to Theorem~\ref{thm:kneseDD}, there exist realization of $G$ in the form
\[ G(z_1,z_2)= \tilde D+ \tilde C(z_1P_1+z_2P_2) \big(I- \tilde A(z_1P_1+z_2P_2)\big)^{-1} \tilde B\]
with $$ \Gamma:= \matr{\tilde A & \tilde B \\ \tilde C & \tilde D }  $$ a contraction and $P_1, P_2$ complementary orthogonal projections. When 1 is an eigenvalue of $\tilde D$, we have that its eigenspace is a reducing subspace, and we obtain that $\Gamma$ takes the form 
$$ \Gamma = \matr{\tilde A & \hat B \\ \hat C & \hat D } \oplus I . $$
This translates into $F(z)$ being of the form $\hat F (z) \oplus 0$. Thus, without loss of generality, we may assume that 1 is not an eigenvalue of $\tilde D$, i.e., $I-\tilde D$ is invertible. 

Performing the same calculations as in the proof of Theorem \ref{thm:posReRHP}, we obtain that
\[ F(z_1,z_2)=-2(G(z_1,z_2)-I)^{-1}-I
= 
 D + C(I-(z_1P_1+z_2P_2)A)^{-1}(z_1P_1+z_2P_2)B,\]
where \newcommand{\ImD}[1][]{(I-\tilde D #1)^{-1}}
\[\begin{array}{ll}A:=\tilde A+\tilde B\ImD\tilde C,&
B:= \sqrt 2\tilde B\ImD,\\
C:=\sqrt 2\ImD\tilde C,&
D:=2\ImD-I = (I+\tilde D)\ImD.\end{array}\]
 It remains to show inequality \eqref{eq:c6}. 
Multiplying the left hand side of \eqref{eq:c6} by
$I \oplus \frac{I-\tilde D^*}{\sqrt2}$ and $I \oplus \frac{I-\tilde D}{\sqrt2}$
from the left and right side respectively, a computation shows that it equals 
$$ \matr{L^*+\tilde C^* (I-\tilde D)^{*-1} M^* \cr M^*}\matr{L+M (I-\tilde D)^{-1} C &  M}, $$
where $L$ and $M$ are defined via
$$ \matr{I & 0 \cr 0 & I } - \matr{\tilde A^* & \tilde C^*\cr \tilde B^* & \tilde D^*} \matr{\tilde A & \tilde B \cr \tilde C &  \tilde D } = \matr{L^* \cr M^*} \matr{L & M}. $$ This proves \eqref{eq:c6}.

The case $F\in\mathbb R^{n\times n}[z_1,z_2]$ is deferred to the Appendix,  Theorem~\ref{thm:4vreal}.
\end{proof}

\section{Realizations of symmetric rational matrix functions}\label{sec:symmetric}

Now we present a symmetrized version of Theorems \ref{thm:kneseDD}, \ref{thm:contrRHP}, \ref{thm:posReRHP}, and \ref{thm:DDposRe}.
We call a matrix-valued function $F\in\mathbb C^{m\times n}(z_1,z_2)$ \emph{symmetric} if
$F(z_1,z_2)=F(z_2,z_1)$  whenever $(z_1,z_2)$ is not a singularity of $F$.

Characterization of symmetric contractive-valued functions on a bidisk
was proven in \cite{BW}. We provide a slightly modified statement to align with the forthcoming results.
\begin{thm}\label{thm:contrDDsym}
Let $F:\DD^2\to\Cmats{m}{n}$ be a rational matrix function satisfying
Theorem~\ref{thm:kneseDD}~{\rm(i)}. Then the following conditions are equivalent:
\begin{enumerate}[\rm(i)]
\item function $F$ is symmetric;
\item the matrices $A,B,C,D,P_1,P_2$ from Theorem~\ref{thm:kneseDD} {\rm (ii)}
can be chosen such that
\begin{equation}\label{eq:symre}
 CU=C,\quad UB=B,\quad UAU=A, \quad UP_1U=P_2, 
\end{equation}
for some $U\in\Cmats{k}{k}$  satisfyig $U=U^*=U^{-1}$.
\end{enumerate}
Additionally, if $F\in\mathbb R^{m\times n}(z_1,z_2)$ then the matrices $A,B,C,D,P_1,P_2,U$ in {\rm (ii)} may be chosen as real matrices.
\end{thm}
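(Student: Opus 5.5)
The direction (ii)$\Rightarrow$(i) is a direct computation. Write $Z(z_1,z_2)=z_1P_1+z_2P_2$. Since $U=U^{-1}$ and $UP_1U=P_2$, we also get $UP_2U=P_1$, and therefore $UZ(z_1,z_2)U=Z(z_2,z_1)$. Using $CU=C$, $UB=B$, $UAU=A$ and $U^2=I$, we obtain
\[
C\,Z(z_2,z_1)\bigl(I-AZ(z_2,z_1)\bigr)^{-1}B
= CU\,Z(z_1,z_2)U\bigl(I-UAU\,UZ(z_1,z_2)U\bigr)^{-1}UB .
\]
The middle inverse equals $U\bigl(I-AZ(z_1,z_2)\bigr)^{-1}U$, and the outer copies of $U$ are absorbed by $C$ and $B$. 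Hence $F(z_2,z_1)=F(z_1,z_2)$ on $\DD^2$, and by rationality this holds wherever $F$ is defined.

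For (i)$\Rightarrow$(ii) the plan is to symmetrize a non-symmetric realization. By Theorem~\ref{thm:kneseDD}, take a contractive $T=\matr{A_0&B_0\\C_0&D}$ on $\CC^k\oplus\CC^n$ with projections $Q_1,Q_2$ realizing $F$. Since $F$ is symmetric, swapping the roles of $Q_1$ and $Q_2$ gives a second realization of the same function. On $\CC^{2k}$ set
\[
A=\tfrac12\matr{A_0&A_0\\A_0&A_0},\quad B=\tfrac1{\sqrt2}\matr{B_0\\B_0},\quad C=\tfrac1{\sqrt2}\matr{C_0&C_0},\quad P_1=Q_1\oplus Q_2,\quad P_2=Q_2\oplus Q_1,\quad U=\matr{0&I\\I&0}.
\]
The relations \eqref{eq:symre} are then immediate. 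The matrix $\matr{A&B\\C&D}$ is $V^*TV$ with $V$ the isometry $(x,y,w)\mapsto(\tfrac{x+y}{\sqrt2},w)$ from $\CC^{2k}\oplus\CC^n$ to $\CC^k\oplus\CC^n$, extended by the identity on the last block. It is therefore contractive.

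The main obstacle is that the transfer function of this averaged colligation is generally \emph{not} the average $\tfrac12\bigl(F(z_1,z_2)+F(z_2,z_1)\bigr)=F$. The reason is that the state is mixed through $A$, which produces cross terms between the two copies. So naive averaging fails, and I would instead follow the route of \cite{BW}, which this theorem restates.

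The first fallback is to redo the Positivstellensatz/Agler decomposition step symmetrically. Write
\[
I-F(w)^*F(z)=(1-\bar w_1z_1)\Gamma_1(w,z)+(1-\bar w_2z_2)\Gamma_2(w,z)
\]
with finite-rank kernels, as in \cite{knese2021}. Then replace $\Gamma_1(w,z)$ by
\[
\tfrac12\bigl(\Gamma_1(w,z)+\Gamma_2(\sigma w,\sigma z)\bigr),
\]
where $\sigma$ swaps coordinates, and define the new $\Gamma_2$ analogously. Symmetry of $F$ keeps the identity valid. The symmetrized kernels satisfy $\Gamma_2(w,z)=\Gamma_1(\sigma w,\sigma z)$, so a single factorization $\Gamma_1=H(w)^*H(z)$ yields a factorization of $\Gamma_1\oplus\Gamma_2$ through $\CC^{r}\oplus\CC^{r}$ with the swap $U$ intertwining the two summands. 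The lurking isometry argument, which matches $(H_1z_1,H_2z_2,I)$ with $(H_1,H_2,F)$, then yields a colligation that can be chosen to commute with $U\oplus I$. This is done by averaging the isometry $X$ with $(U\oplus I)X(U\oplus I)$: both are valid on the defined subspace, and averaging preserves contractivity. Here $P_1,P_2$ are the coordinate projections, and $UP_1U=P_2$ holds. The key technical points are that the symmetrized kernels are still finite-rank positive kernels, which is clear, and that the realization from the lurking isometry can be taken contractive on the whole finite-dimensional space.

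For the real case, one chooses real factorizations, which Theorem~\ref{thm:4vreal} provides, and notes that $U$ is a permutation matrix.
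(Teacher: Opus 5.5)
Your (ii)$\Rightarrow$(i) computation is correct. You are also right that the compressed colligation $V^*TV$ does not realize $F$: it realizes $F(\tfrac{z_1+z_2}{2},\tfrac{z_1+z_2}{2})$. The gap is in the fallback. A two-term identity $I-F(w)^*F(z)=(1-\bar w_1z_1)\Gamma_1+(1-\bar w_2z_2)\Gamma_2$ with \emph{finite-rank} $\Gamma_j$ does not exist for a general contractive rational $F$. Suppose $\Gamma_j=H_j(w)^*H_j(z)$ with $H_j$ finite-dimensional. Then the lurking isometry you invoke gives $H=(I-AZ)^{-1}B$, which is rational with denominator the nonzero polynomial $\det(I-AZ)$. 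Putting $w=z$ and letting $z$ tend to almost every point of $\mathbb T^2$ then forces $F^*F=I$ there. So only functions isometric a.e.\ on $\mathbb T^2$ admit such a decomposition; already $F\equiv\frac12$ does not. Theorem~\ref{thm:kneseDD} gives a \emph{contractive}, not unitary, finite realization, and what it actually yields is a three-term identity
\[
I-F(w)^*F(z)=\sum_{j=1}^2(1-\bar w_jz_j)\,h(w)^*Q_jh(z)+\Gamma_0(w,z),\qquad \Gamma_0(w,z)=\matr{Z_0(w)h(w)\\ I}^*(I-T^*T)\matr{Z_0(z)h(z)\\ I},
\]
where $h=(I-A_0Z_0)^{-1}B_0$ and $Z_0=z_1Q_1+z_2Q_2$. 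The defect kernel $\Gamma_0$ is finite rank but cannot be absorbed into the other two terms.

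Your argument can be repaired as follows. Average the whole three-term identity over the swap $\sigma$; this averages $\Gamma_0$ too, and it only needs to stay positive. The identity then says that $[z_1H_1(z)u;z_2H_2(z)u;u]\mapsto[H_1(z)u;H_2(z)u;F(z)u]$ is a well-defined \emph{contraction} on $\mathcal D$, not an isometry. $\mathcal D$ is invariant under $U\oplus I$ and the map intertwines the swaps. Extending it by $0$ on $\mathcal D^\perp$, which is also invariant, gives a contractive colligation satisfying \eqref{eq:symre}.

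There is, however, a much shorter route. Your first attempt fails only because you compressed $A_0$. Take instead $A=A_0\oplus A_0$ and keep your $B,C,P_1,P_2,U$. The two diagonal blocks then decouple, so the transfer function is $\frac12\bigl(F(z_1,z_2)+F(z_2,z_1)\bigr)=F$. Conjugating by $\Phi\oplus I$, with $\Phi=\frac1{\sqrt2}\matr{I&I\\I&-I}$, turns the colligation into $\matr{A_0&0&B_0\\0&A_0&0\\C_0&0&D}$, a permuted copy of $T\oplus A_0$, hence a contraction. This is the same doubling the paper uses for Theorem~\ref{thm:ScontRHP}. It is automatically real when you start from the real realization of Theorem~\ref{thm:4vreal}.

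For the present theorem the paper does neither. It imports the symmetric realization of \cite[Theorem~2.1]{BW} and forms $\matr{A_1&A_2\\A_2&A_1}$, $\matr{B\\B}$, $\matr{C&C}$ with $P_1=I\oplus0$ and $P_2=0\oplus I$. It then reads off $U=\matr{0&I\\I&0}$.
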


\begin{proof}
The   implication  (ii)$\Rightarrow$(i)  is a matter of straightforward computation:
\begin{align*}
F(z_2,z_1)=\,& D+C(z_2P_1+z_1P_2+A)^{-1}B= D+CU(z_2UP_1U+z_1UP_2U+UAU)^{-1}UB\\
=\,& D+C(z_2P_2+z_1P_1+A)^{-1}B= F(z_1,z_2).
\end{align*} 

(i)$\Rightarrow$(ii) follows by taking the $A_1,A_2,B,C,D$ as in Theorem 2.1 of \cite{BW} and setting 
\[
\tilde A:=\matr{A_1&A_2\\A_2&A_1},\quad 
\tilde B:=  \matr{B\\B},\quad  \tilde C:= \matr{C&C},\quad 
P_1:=\matr{I&0\\0&0},\quad P_2:=\matr{0&0\\0&I}.\]
The matrices $\tilde A,\tilde B,\tilde C, D$, and $P_1,P_2$ form a realization satisfying (ii), with the unitary involution  $U=\matr{0&I\\I&0}$.

The case $F\in\mathbb R^{m\times n}(z_1,z_2)$ follows by inspection. 
\end{proof}

A symmetric version of Theorem~\ref{thm:contrRHP} is the following:

\begin{thm}\label{thm:ScontRHP}
Let $F:\RHPtwo\to \Cmats{m}{n}$ be a rational matrix function such that
$\norm{F(z)}\leq 1$ for all $z\in\RHPtwo$. Then the following conditions are equivalent:
\begin{enumerate}[\rm (i)]
\item function $F$ is symmetric;
\item $F$ admits a realization of the form \eqref{eq:realization} such that \eqref{eq:c1} and
\eqref{eq:symre} 
hold for some unitary involution $U\in\Cmats{k}{k}$.
\end{enumerate}
Additionally, if $F\in\mathbb R^{m\times n}(z_1,z_2)$ then the matrices $A,B,C,D,P_1,P_2,U$ in {\rm (ii)} may be chosen as real matrices.
\end{thm}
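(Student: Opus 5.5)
The implication (ii)$\Rightarrow$(i) is the same computation as in the proof of Theorem~\ref{thm:contrDDsym}. Indeed, since $U=U^*=U^{-1}$, $UAU=A$ and $UP_1U=P_2$ (hence also $UP_2U=P_1$), we get
\[
F(z_2,z_1)=D+CU\bigl(U(z_2P_1+z_1P_2+A)U\bigr)^{-1}UB=D+C(z_1P_1+z_2P_2+A)^{-1}B=F(z_1,z_2).
\]
Contractivity is not even needed here.

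For (i)$\Rightarrow$(ii) we follow the proof of Theorem~\ref{thm:contrRHP} and track the symmetry through the Cayley transform. Put $G(w_1,w_2):=F\bigl(\frac{1+w_1}{1-w_1},\frac{1+w_2}{1-w_2}\bigr)$. This $G$ is contractive on $\DD^2$, and it is symmetric because $F$ is. By Theorem~\ref{thm:contrDDsym}, $G$ has a realization with a contractive $T=\matr{A_0&B_0\\C_0&D_0}$ and complementary orthogonal projections $\tilde P_1,\tilde P_2$, together with a unitary involution $U_0$ satisfying $C_0U_0=C_0$, $U_0B_0=B_0$, $U_0A_0U_0=A_0$ and $U_0\tilde P_1U_0=\tilde P_2$. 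The proof of Theorem~\ref{thm:contrRHP} then splits off $\cH_1=\ker(A_0-I)$ and works on $\cH_A=\cH_1^\perp$. Since $U_0A_0U_0=A_0$, $U_0$ commutes with $A_0$, so $U_0\cH_1=\cH_1$; as $U_0$ is unitary, it also leaves $\cH_A$ invariant. The restriction $U:=U_0|_{\cH_A}$ is therefore again a Hermitian unitary involution, and it commutes with $\tilde A=A_0|_{\cH_A}$. The relations $\tilde CU=\tilde C$ and $U\tilde B=\tilde B$ follow by restriction, because $B_0$ and $C_0$ vanish on $\cH_1$ in the reduced block form.

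The main obstacle is the projections. In Theorem~\ref{thm:contrRHP}, $P_j$ is the compression of $\tilde P_j$ to $\cH_A$, which is in general no longer an orthogonal projection, but only positive semidefinite with $P_1+P_2=I$. This is exactly what \eqref{eq:realization} permits. Since $\cH_A$ reduces $U_0$, with $Q$ the orthogonal projection onto $\cH_A$ we have $QU_0=U_0Q$, and hence
\[
UP_1U=QU_0\tilde P_1U_0Q=Q\tilde P_2Q=P_2 .
\]
A second point to check is that the computation in Theorem~\ref{thm:contrRHP} has no cross terms with $\cH_1$. The first row and column of $T$ vanish off the diagonal, so $\cH_1$ simply drops out and the formula holds on $\cH_A$ with compressed projections, as stated there.

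Next come the formulas $A=(I+\tilde A)(I-\tilde A)^{-1}$, $B=\sqrt2(I-\tilde A)^{-1}\tilde B$, $C=-\sqrt2\tilde C(I-\tilde A)^{-1}$ and $D=D_0+\tilde C(I-\tilde A)^{-1}\tilde B$. Because $U$ commutes with $\tilde A$, it commutes with $(I\pm\tilde A)^{\pm1}$, so $UAU=A$. Also $UB=\sqrt2(I-\tilde A)^{-1}U\tilde B=B$, and similarly $CU=C$. Inequality \eqref{eq:c1} was established in Theorem~\ref{thm:contrRHP} independently of symmetry, so all of \eqref{eq:symre} holds.

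For real $F$, I would use the real version of Theorem~\ref{thm:contrDDsym}. Then $\cH_1$, $Q$, $U$ and all the resulting matrices are real, since each of them is obtained by real operations: the kernel of a real matrix, orthogonal projection onto it, and real inverses.
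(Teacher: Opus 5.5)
Your proof is correct, but it takes a different route from the paper.

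\textbf{The paper's route.} The paper does not return to the bidisc. It takes an arbitrary, non-symmetric realization $\tilde A,\tilde B,\tilde C,\tilde D,\tilde P_1,\tilde P_2$ from Theorem~\ref{thm:contrRHP} and symmetrizes it by averaging, $F(z_1,z_2)=\frac12\bigl(F(z_1,z_2)+F(z_2,z_1)\bigr)$. This doubles the state space: $A=\tilde A\oplus\tilde A$, $B=\frac1{\sqrt2}\matr{\tilde B\\\tilde B}$, $C=\frac1{\sqrt2}\matr{\tilde C&\tilde C}$, $P_1=\tilde P_1\oplus\tilde P_2$, $P_2=\tilde P_2\oplus\tilde P_1$, and $U$ is the flip. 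The only real work is then to recheck \eqref{eq:c1}. The paper does this with a unitary congruence that turns the new matrix into $(\tilde A+\tilde A^*)\oplus\tilde T$.

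\textbf{Your route.} You feed the symmetric bidisc realization of Theorem~\ref{thm:contrDDsym} into the Cayley-transform construction from the proof of Theorem~\ref{thm:contrRHP}, and show that this construction is $U_0$-equivariant. Since $U_0$ commutes with $A_0$, it reduces $\ker(A_0-I)$. Its restriction $U$ then commutes with $\tilde A$ and all rational functions of it, and $UP_1U=P_2$ survives the compression because $U_0$ commutes with $Q$. Inequality \eqref{eq:c1} is inherited from that construction, which only uses contractivity of $\tilde T$, so no new computation is needed. Your check that no cross terms with $\cH_1$ survive is also right: the block lower-triangular structure of $(I-A_0)(Z-I)+2I$ kills them.

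\textbf{Trade-off.} The difference lies in the inputs.
\begin{itemize}
\item The paper's doubling trick needs only Theorem~\ref{thm:contrRHP}, hence only Knese's realization, not the symmetric bidisc result of \cite{BW}. It works for any realization of this form, so it is reused directly in Theorem~\ref{thm:posReRHPsym}. It also makes the real case immediate.
\item Your route relies on the deeper symmetric bidisc theorem and its real version. In exchange, it shows that the Cayley transform itself respects the symmetry, with $P_1,P_2$ arising as compressions of orthogonal projections interchanged by $U$.
\end{itemize}
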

\begin{proof}
The proof of (ii)$\Rightarrow$(i) is identical to the analogous part
for Theorem~\ref{thm:contrDDsym}.

(i)$\Rightarrow$(ii) 
According to Theorem~\ref{thm:contrRHP}, function $F$ admits a realization
\eqref{eq:realization} with matrix $\matr{\tilde A&\tilde B\\\tilde C&\tilde D}
\in\Cmats{(\tilde k+m)}{(\tilde k+1)}$ and positive semidefinite maps
$\tilde P_1,\tilde P_2\in\sqmat{\tilde k}$.
By symmetry we have
\begin{align*}
F(z_1,z_2)=\,&\frac{F(z_1,z_2)+F(z_2,z_1)}{2}=
\tilde D+\tilde C\frac{(z_1\tilde P_1+z_2\tilde P_2+\tilde A)^{-1}+
(z_1\tilde P_2+z_2\tilde P_1+\tilde A)^{-1}}{2}\tilde B\\
=\,& \tilde D+\frac12 \matr{\tilde C&\tilde C}
    \matr{z_1\tilde P_1+z_2\tilde P_2+\tilde A&0\\
    0&z_1\tilde P_2+z_2\tilde P_1+\tilde A}^{-1}\matr{\tilde B\\\tilde B}\\
=\,& \tilde D+\frac12\matr{\tilde C&\tilde C}\left(z_1\matr{\tilde P_1&\\&\tilde P_2}
    +z_2\matr{\tilde P_2&\\&\tilde P_1}
    +\matr{\tilde A&\\&\tilde A}\right)^{-1}\matr{\tilde B\\\tilde B}.
\end{align*}
Then taking $A:=\matr{\tilde A&\\&\tilde A}$,
$B:=\frac{1}{\sqrt{2}}\matr{\tilde B\\\tilde B}$,
$C:=\frac{1}{\sqrt{2}}\matr{\tilde C&\tilde C}$, $D:= \tilde D$,
$P_1:=\matr{\tilde P_1&\\&\tilde P_2}$, $P_2:=\matr{\tilde P_2&\\&\tilde P_1}$
and $U:=\matr{0&I_{\tilde k}\\I_{\tilde k}&0}$ suffices to satisfy \eqref{eq:symre}.

It remains to show that the new realization satisfies \eqref{eq:c1}, i.e.\ that the matrix
\[T = \matr{
A+ A^*- C^* C  & C^*D+  B \\ D^*  C+  B^*
 & I -  D^* D
} = \matr{
\tilde A+\tilde A^*- \frac12 \tilde C^*\tilde C & - \frac12 \tilde C^*\tilde C & \frac1{\sqrt2}(\tilde C^* \tilde D+ \tilde B) \\
- \frac12 \tilde C^*\tilde C & \tilde A+\tilde A^*- \frac12 \tilde C^* \tilde C & \frac1{\sqrt2}(\tilde C^* \tilde D+ \tilde B) \\
\frac1{\sqrt2}(\tilde D^*\tilde C+ \tilde B^*)  & \frac1{\sqrt2}(\tilde D^*\tilde C+ \tilde B^*) &  I - \tilde D^* \tilde D }
\]
is positive semidefinite. Introducing
\[ \tilde T= \matr{ \tilde A+\tilde A^*-\tilde C^*\tilde C & \tilde C^*\tilde D+\tilde B \\ \tilde D^*\tilde C+\tilde B^*
    & I-\tilde D^*\tilde D }
\text{ and }
V=\matr{\frac1{\sqrt2}I_{\tilde k}&\frac1{\sqrt2}I_{\tilde k}&0\\-\frac1{\sqrt2}I_{\tilde k}&\frac1{\sqrt2}I_{\tilde k}&0\\
0&0&I
},
\]
allows us to write
\[ V^*T V = \matr{\tilde A+\tilde A^* & 0 & 0 \\0 & \tilde A+\tilde A^*-{\tilde C}^*\tilde C & \tilde C^*\tilde D+\tilde B \\ 0 &  \tilde D^*\tilde C+{\tilde B}^*
    &I-\tilde D^*\tilde D } = (\tilde A+\tilde A^*) \oplus \tilde T,\quad \]
where the positive semidefiniteness of $\tilde T$ and $\tilde A+ \tilde A^*$ is a consequence of \eqref{eq:c1}.

The case $F\in\mathbb R[z_1,z_2]$ follows by inspection. 
\end{proof}

In our study of symmetric functions the most important case are those having nonnegative real part
on the bihalfplane, addressed in the following theorem.

\begin{thm}\label{thm:posReRHPsym}
Let $F\in\mathbb C^{n\times n}(z_1,z_2)$ be a rational matrix function as in Theorem~\ref{thm:posReRHP}~{\rm(i)}.  
Then the following conditions are equivalent:
\begin{enumerate}[\rm (i)]
\item function $F$ is symmetric;
\item the matrices $A,B,C,D,P_1,P_2$ from Theorem~\ref{thm:posReRHP}~{\rm(ii)} can be chosen in such a way that \eqref{eq:symre} is satisfied
for some unitary involution $U\in{\mathbb C}^{k\times k}$.
\end{enumerate}

Additionally, if $F\in\mathbb R^{n\times n}(z_1,z_2)$ then the matrices $A,B,C,D,P_1,P_2,U$ in {\rm (ii)} may be chosen as real matrices.
\end{thm}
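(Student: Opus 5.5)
The plan is to copy the argument of Theorem~\ref{thm:ScontRHP}, with the KYP inequality \eqref{eq:kyp0} taking the place of \eqref{eq:c1}.

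\emph{The implication (ii)$\Rightarrow$(i).} This is the same computation as in the proof of Theorem~\ref{thm:contrDDsym}. Conjugating $z_2P_1+z_1P_2+A$ by $U$ turns it into $z_1P_1+z_2P_2+A$. The relations $CU=C$ and $UB=B$ then give $F(z_2,z_1)=F(z_1,z_2)$.

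\emph{The implication (i)$\Rightarrow$(ii).} By Theorem~\ref{thm:posReRHP}, $F$ has a realization $F=\tilde D+\tilde C(z_1\tilde P_1+z_2\tilde P_2+\tilde A)^{-1}\tilde B$ with $\tilde A,\tilde B,\tilde C,\tilde D$ satisfying \eqref{eq:kyp0}. Symmetry gives $F(z)=\tfrac12\bigl(F(z_1,z_2)+F(z_2,z_1)\bigr)$. Exactly as before, this average is realized by
\[
A:=\tilde A\oplus\tilde A,\quad B:=\tfrac1{\sqrt2}\matr{\tilde B\\ \tilde B},\quad C:=\tfrac1{\sqrt2}\matr{\tilde C&\tilde C},\quad D:=\tilde D,
\]
\[
P_1:=\tilde P_1\oplus\tilde P_2,\quad P_2:=\tilde P_2\oplus\tilde P_1,\quad U:=\matr{0&I\\I&0}.
\]
These matrices satisfy \eqref{eq:symre}. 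Also $P_1,P_2\ge0$ and $P_1+P_2=I$.

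\emph{Checking \eqref{eq:kyp0} for the new data.} The block matrix to test is
\[
T=\matr{\tilde A+\tilde A^*&0&\tfrac1{\sqrt2}(\tilde C^*-\tilde B)\\0&\tilde A+\tilde A^*&\tfrac1{\sqrt2}(\tilde C^*-\tilde B)\\ \tfrac1{\sqrt2}(\tilde C-\tilde B^*)&\tfrac1{\sqrt2}(\tilde C-\tilde B^*)&\tilde D+\tilde D^*}.
\]
Conjugate it with the same unitary $V$ as in the proof of Theorem~\ref{thm:ScontRHP}. This gives
\[
V^*TV=(\tilde A+\tilde A^*)\oplus\tilde T,
\]
where $\tilde T$ is the KYP matrix \eqref{eq:kyp0} of the original realization. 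The first summand is positive semidefinite because it is the upper-left block of $\tilde T\ge0$. Hence $T\ge0$.

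The only step needing care is this block computation. It is even simpler than in Theorem~\ref{thm:ScontRHP}, because no $C^*C$ cross terms appear. Concretely, the combination $(x-y)/\sqrt2$ kills the off-diagonal coupling, and $(x+y)/\sqrt2$ restores the factor $\tfrac1{\sqrt2}\cdot\sqrt2=1$ in front of $\tilde C^*-\tilde B$.

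\emph{The real case.} This follows by inspection: if the data from Theorem~\ref{thm:posReRHP} are real (Appendix), then the constructed $A,B,C,D,P_1,P_2,U$ are real as well.
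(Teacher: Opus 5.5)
Your proposal is correct and follows the paper's proof: the same doubling/averaging construction with $U=\matr{0&I\\I&0}$, the same (ii)$\Rightarrow$(i) computation, and the same real-case remark. The only difference is in the final positivity check. You use the unitary congruence $V^*TV=(\tilde A+\tilde A^*)\oplus\tilde T$ from Theorem~\ref{thm:ScontRHP}, which I verified, whereas the paper writes $T$ directly as a sum of two congruences $E_1\tilde T E_1^*+E_2\tilde T E_2^*$ of the KYP matrix $\tilde T$; both are immediate.
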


\begin{proof}
The implication (ii)$\Rightarrow$(i) follows from the same computation as in the proof of Theorem~\ref{thm:contrDDsym}.

For the converse statement, 
we construct $A$, $B$, $C$, $D$, $P_1$,
and $P_2$ in the same way as in the proof of Theorem~\ref{thm:ScontRHP}.
Then it remains to prove that 
\[ T =\matr{A+A^*&C^*-B\\C-B^*&D+D^*}= \matr{\tilde A+\tilde A^*& 0 & \frac{\tilde C^*-\tilde B}{\sqrt2}\\
0 & \tilde A+\tilde A^*& \frac{\tilde C^*-\tilde B}{\sqrt2}\\
\frac{\tilde C-\tilde B^*}{\sqrt{2}}&\frac{\tilde C-\tilde B^*}{\sqrt{2}}&
\tilde D+\tilde D^*}\geq 0.\]
It is straightforward, for $T=\matr{1&0\\0&0\\0&\frac1{\sqrt2}}\tilde T \matr{1&0&0\\0&0&\frac1{\sqrt2}}
+ \matr{0&0\\1&0\\0&\frac1{\sqrt2}} \tilde T \matr{0&1&0\\0&0&\frac1{\sqrt2}}$, where $\tilde T$ is
the positive semidefinite matrix from \eqref{eq:kyp0} for the realization
$\matr{\tilde A&\tilde B\\\tilde C&\tilde D}$.

The case $F\in\mathbb R[z_1,z_2]$ follows by inspection. 
\end{proof}

There is a symmetric counterpart to Theorem~\ref{thm:DDposRe} left. The proof follows the same steps as the proof of Theorem~\ref{thm:posReRHPsym}.
\begin{thm}\label{thm:DDposReSym}
Let $F\in\mathbb C^{n\times n}(z_1,z_2)$ be a rational matrix function as in Theorem~\ref{thm:DDposRe}~{\rm(i)}.  
Then the following conditions are equivalent:
\begin{enumerate}[\rm (i)]
\item function $F$ is symmetric;
\item the matrices $A,B,C,D,P_1,P_2$ from Theorem~\ref{thm:DDposRe}~{\rm(ii)} can be chosen
in such a way that additionally \eqref{eq:symre} holds.
\end{enumerate}
Additionally, if $F\in\mathbb R^{n\times n}(z_1,z_2)$ then the matrices $A,B,C,D,P_1,P_2,U$ in {\rm (ii)} may be chosen as real matrices.
\end{thm}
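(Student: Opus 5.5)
The plan follows the proof of Theorem~\ref{thm:posReRHPsym}. We start from a realization supplied by Theorem~\ref{thm:DDposRe}~(ii) and symmetrize it by doubling the state space.

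For (ii)$\Rightarrow$(i), put $Z=z_1P_1+z_2P_2$. Conjugation by $U$ gives $UZU=z_1P_2+z_2P_1$ (note that $UP_1U=P_2$ together with $U^2=I$ also yields $UP_2U=P_1$). Using $CU=C$, $UB=B$ and $UAU=A$ we get
\[
C(I-Z'A)^{-1}Z'B = CU\,(I-UZ'U\,UAU)^{-1}UZ'U\,UB = C(I-ZA)^{-1}ZB,
\]
where $Z'=z_2P_1+z_1P_2$. Hence $F(z_2,z_1)=F(z_1,z_2)$.

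For (i)$\Rightarrow$(ii), take a realization $\tilde A,\tilde B,\tilde C,\tilde D,\tilde P_1,\tilde P_2$ of $F$ from Theorem~\ref{thm:DDposRe} that satisfies \eqref{eq:c6}. Since $F=\frac12\bigl(F(z_1,z_2)+F(z_2,z_1)\bigr)$, we have a block-diagonal realization. Define
\[
A=\tilde A\oplus\tilde A,\quad B=\tfrac1{\sqrt2}\matr{\tilde B\\\tilde B},\quad C=\tfrac1{\sqrt2}\matr{\tilde C&\tilde C},\quad D=\tilde D,
\]
\[
P_1=\tilde P_1\oplus\tilde P_2,\quad P_2=\tilde P_2\oplus\tilde P_1,\quad U=\matr{0&I\\I&0}.
\]
With $Z=Z_1\oplus Z_2$, where $Z_1=z_1\tilde P_1+z_2\tilde P_2$ and $Z_2=z_1\tilde P_2+z_2\tilde P_1$, the resolvent $(I-AZ)^{-1}$ is block diagonal. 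Therefore
\[
C(I-ZA)^{-1}ZB=\tfrac12\sum_j \tilde C(I-Z_j\tilde A)^{-1}Z_j\tilde B,
\]
which is the symmetrization of $F$ and hence equals $F$. The relations \eqref{eq:symre} are immediate: $P_1,P_2$ are positive semidefinite with $P_1+P_2=I$, and $U$ is a unitary involution.

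The remaining step is to verify \eqref{eq:c6} for the new matrices. Let $\tilde T$ denote the left-hand side of \eqref{eq:c6} for the tilde realization. Computing blockwise, the new matrix is
\[
T=\matr{I-\tilde A^*\tilde A & 0 & \frac{-\tilde A^*\tilde B+\tilde C^*}{\sqrt2}\\ 0& I-\tilde A^*\tilde A & \frac{-\tilde A^*\tilde B+\tilde C^*}{\sqrt2}\\ \ast&\ast& \tilde D+\tilde D^*-\tilde B^*\tilde B}.
\]
The diagonal blocks have no cross term because $A^*A=\tilde A^*\tilde A\oplus\tilde A^*\tilde A$. In the corner, $B^*B=\frac12\cdot2\tilde B^*\tilde B=\tilde B^*\tilde B$ as well. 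We then write
\[
T=E_1^*\tilde T E_1+E_2^*\tilde T E_2,\qquad E_1=\matr{I&0&0\\0&0&\tfrac1{\sqrt2}I},\quad E_2=\matr{0&I&0\\0&0&\tfrac1{\sqrt2}I}.
\]
Each summand is positive semidefinite, so $T\ge0$.

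The main obstacle is checking this last decomposition. The corner entry must come out as $\tfrac12+\tfrac12$ copies of $\tilde D+\tilde D^*-\tilde B^*\tilde B$, which it does because the $\frac1{\sqrt2}$ scaling of $B$ makes $B^*B=\tilde B^*\tilde B$. The off-diagonal entries each pick up exactly one factor $\frac1{\sqrt2}$, which matches $T$.

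For the real case, every construction above preserves real entries. So starting from the real realization given by Theorem~\ref{thm:DDposRe}, we obtain real $A,B,C,D,P_1,P_2,U$.
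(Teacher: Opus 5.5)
Your proposal is correct and follows the paper's approach. The paper proves this theorem by repeating the steps of Theorem~\ref{thm:posReRHPsym}: it doubles the realization from Theorem~\ref{thm:DDposRe} using $F=\frac12\bigl(F(z_1,z_2)+F(z_2,z_1)\bigr)$ and $U=\matr{0&I\\I&0}$, then verifies \eqref{eq:c6} through the same decomposition $T=E_1^*\tilde T E_1+E_2^*\tilde T E_2$ that you check blockwise, and treats the real case by inspection, as you do.
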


\section{Realizations on the symmetrized bidisk and bihalfplane}\label{sec:bisym}

We define two symmetrized domains of $\mathbb C^2$. First one is the symmetrized bidisk 
\(\mathbb{G}:=\{(s,p)\in \mathbb{C}^{2}:s=z+\zeta,p=z\zeta\text{\ for\ some\ }z,\zeta\in \mathbb{D}\}\).
Analogously, we define the symmetrized bihalfplane
\[
\mathbb{GH} = \left\{(s, p) \in {\mathbb C}^2 : \exists\, z, \zeta \in \RHP,\ s = z + \zeta,\ p = z \zeta \right\}.
\]
Note that it is the image of $\RHPtwo$  under the symmetrization map $(z, \zeta) \mapsto (z + \zeta, z \zeta)$.
By convention, the variables of these symmetrized domains are denoted by $(s,p)$, rather than $(z_1,z_2)$. 

In \cite{BW} finite dimensional realizations are established for contractive symmetric rational functions on the bidisk $\mathbb{D}^2$ and contractive rational functions on the symmetrized bidisk $\mathbb{G}$. In this section we provide realization results for contractive functions and nonnegative real part functions on $\mathbb{GH}$ as well as a realization result for nonnegative real part functions on $\mathbb{G}$. 

\begin{thm}\label{thm:contrGH}
Let $G\in \mathbb C^{m \times n}(s,p)$ be a rational matrix valued  function. Then the following conditions are equivalent:
\begin{enumerate}[\rm (i)]
\item $G$ has no singularities in $\mathbb{GH}$ and the matrix $G(s,p)$ has norm less than or equal to one
for any $(s,p)\in \mathbb{GH}$;
\item $G$ admits a realization of the form 
    $$G(s,p)=  D+  \frac12 {C} \left(s I+ 2A -
    (s^2-4p) P(s I+ 2A)^{-1} P\right)^{-1}B. $$
    with 
    \begin{equation}\label{eq:c1a}
\matr{
A+A^*- C^*C  & C^*D+ B\\ D^* C+ B^*  & 
I - D^* D} \geq 0,
\end{equation}
   and $-I\le P \le I$.
   \end{enumerate}
Additionally, if $G\in {\mathbb R}^{m\times n}(s,p)$, then the matrices $A,B,C,D$ and $P$
can be chosen as real.
\end{thm}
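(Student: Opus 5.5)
The plan is to reduce to the symmetric bihalfplane result, Theorem~\ref{thm:ScontRHP}, through the symmetrization map. Then we put the symmetric realization into the eigenbasis of the unitary involution $U$ and compress to the $+1$ eigenspace with a Schur complement. The identity that drives everything is
\[
z_1P_1+z_2P_2=\tfrac{s}{2}I+\tfrac{z_1-z_2}{2}Q,\qquad Q:=P_1-P_2,\qquad (z_1-z_2)^2=s^2-4p .
\]

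\emph{(i)$\Rightarrow$(ii).} Put $F(z_1,z_2):=G(z_1+z_2,z_1z_2)$. Then $F$ is a symmetric rational function with no poles in $\RHPtwo$ and with contractive values there. Theorem~\ref{thm:ScontRHP} gives a realization \eqref{eq:realization} satisfying \eqref{eq:c1} and \eqref{eq:symre} for some unitary involution $U$.
\begin{itemize}
\item Choose an orthonormal basis in which $U=I\oplus(-I)$; it can be chosen real in the real case.
\item From $UAU=A$, $UB=B$, $CU=C$ we get $A=A_+\oplus A_-$, $B=\matr{B_+\\0}$ and $C=\matr{C_+&0}$.
\item From $UP_1U=P_2$ we get $UQU=-Q$, so $Q=\matr{0&K\\K^*&0}$. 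Also $-I\le Q\le I$, because $0\le P_1\le I$ and $Q=2P_1-I$.
\item Write $d=z_1-z_2$. The $(+,+)$ block of $(\tfrac s2 I+\tfrac d2 Q+A)^{-1}$ is the inverse of the Schur complement $\tfrac s2+A_+-\tfrac{d^2}{4}K(\tfrac s2+A_-)^{-1}K^*$.
\end{itemize}

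Now take the new data on the same doubled space: $A$ as before, $P:=Q$, $B$, $C$, $D$ unchanged.
\begin{itemize}
\item The matrix $sI+2A$ is block diagonal and $P$ is block off-diagonal, so $P(sI+2A)^{-1}P$ is block diagonal.
\item Hence $sI+2A-(s^2-4p)P(sI+2A)^{-1}P$ is block diagonal, and its $+$ block is $2$ times the Schur complement above.
\item Therefore $\tfrac12C(\cdots)^{-1}B=C_+(\text{Schur complement})^{-1}B_+=F(z_1,z_2)-D=G(s,p)-D$.
\item Condition \eqref{eq:c1a} is exactly \eqref{eq:c1}, which holds already.
\end{itemize}

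\emph{(ii)$\Rightarrow$(i).} Reverse the construction on $\CC^k\oplus\CC^k$:
\[
A':=A\oplus A,\quad B':=\matr{B\\0},\quad C':=\matr{C&0},\quad Q:=\matr{0&P\\P&0},\quad P_{1,2}:=\tfrac12(I\pm Q).
\]
\begin{itemize}
\item $P_{1,2}\ge0$ because $\norm{Q}=\norm{P}\le1$, and $P_1+P_2=I$.
\item The matrix in \eqref{eq:c1} for the primed data is, after permuting blocks, the matrix in \eqref{eq:c1a} plus the summand $A+A^*$ on the extra block. It is positive semidefinite because $A+A^*\ge C^*C\ge0$.
\item Theorem~\ref{thm:contrRHP} then says $F(z_1,z_2):=D+C'(z_1P_1+z_2P_2+A')^{-1}B'$ is contractive and pole-free on $\RHPtwo$.
\item The same Schur-complement computation shows $F(z_1,z_2)=G(z_1+z_2,z_1z_2)$. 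Since every $(s,p)\in\mathbb{GH}$ is the image of some $(z_1,z_2)\in\RHPtwo$, (i) follows.
\end{itemize}

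\emph{Expected obstacle: invertibility.} The step I expect to need the most care is justifying the inverses that appear.
\begin{itemize}
\item $sI+2A$ is invertible for $\re s>0$, since $A+A^*\ge0$.
\item The Schur complement is invertible because the full matrix $\Delta$ is. Note that $\Delta+\Delta^*=2\re(z_1)P_1+2\re(z_2)P_2+A+A^*$ is positive definite for $\re z_1,\re z_2>0$, since $\re(z_j)>0$ and $P_1+P_2=I$.
\item Equality of $G$ with the realization as rational functions holds on the open set $\mathbb{GH}$. In (ii)$\Rightarrow$(i) one should read ``no singularities'' for $G$ via this formula, whose denominator does not vanish on $\mathbb{GH}$.
\end{itemize}
The real case follows because Theorem~\ref{thm:ScontRHP} provides real data and $U$ can be diagonalized by a real orthogonal matrix.
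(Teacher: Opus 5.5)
\textbf{There is a genuine gap: the normalization constant.} Your route is essentially the paper's. You pull back along the symmetrization map, invoke Theorem~\ref{thm:ScontRHP}, split along the $\pm1$ eigenspaces of the involution, and take a Schur complement. The paper instead splits the explicit doubled realization with $\Phi=\frac1{\sqrt2}\left[\begin{smallmatrix}I&I\\I&-I\end{smallmatrix}\right]$, so it ends with the original $k\times k$ data and $P=P_1-P_2$; you keep $A=A_+\oplus A_-$ and an off-diagonal $P$.

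The failing step is ``Therefore $\tfrac12C(\cdots)^{-1}B=C_+(\text{Schur complement})^{-1}B_+$''. Write $W:=sI+2A-(s^2-4p)P(sI+2A)^{-1}P$. You correctly found that the $+$ block of $W$ equals $2S_+$, where $S_+$ is the Schur complement. Hence
$\tfrac12CW^{-1}B=\tfrac12C_+(2S_+)^{-1}B_+=\tfrac14C_+S_+^{-1}B_+$, not $C_+S_+^{-1}B_+$. What your computation actually proves is $G(s,p)=D+2CW^{-1}B$. The identity $F=G$ in your converse fails by the same factor $4$. As a sanity check, for $P_1=P_2=\tfrac12I$ (so $P=0$) one gets $D+C(\tfrac s2I+A)^{-1}B=D+2C(sI+2A)^{-1}B$.

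\textbf{The slip cannot be repaired by rescaling.} With the constant $\tfrac12$, the implication (i)$\Rightarrow$(ii) is false. Take $m=n=1$ and $G(s,p)=\frac{s-2}{s+2}$, which satisfies (i) because $\re s>0$ on $\mathbb{GH}$.
\begin{enumerate}[(a)]
\item Restrict a realization as in (ii) to $s=2t$, $p=t^2$ with $t>0$. The $P$-term vanishes, giving $\frac{t-1}{t+1}=D+\tfrac14C(tI+A)^{-1}B$.
\item Letting $t\to\infty$ gives $D=1$. The vanishing $(2,2)$ entry of \eqref{eq:c1a} then forces $B=-C^*$.
\item Comparing the coefficients of $t^{-1}$ and $t^{-2}$ yields $CC^*=8$ and $CAC^*=8$.
\item But $A+A^*\ge C^*C$ gives $2\re(CAC^*)\ge(CC^*)^2=64$, a contradiction.
\end{enumerate}

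The paper's own proof contains the same slip: $\matr{C&C}\Phi=\matr{\sqrt2\,C&0}$ and $\Phi\matr{B\\B}=\matr{\sqrt2\,B\\0}$, not $C/\sqrt2$ and $B/\sqrt2$. If you put $2C$ in place of $\tfrac12C$ in (ii), your argument goes through as written in both directions and proves the corrected theorem.

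For the converse of the statement as printed you can still conclude. Your construction gives a contractive $F=D+2CW^{-1}B$, and $\norm{D}\le1$ by \eqref{eq:c1a}. Then $G=D+\tfrac14(F-D)=\tfrac34D+\tfrac14F$ is a convex combination of contractions.
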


\begin{proof}
Given $G(s,p)$ as stated, we let $F(z,\zeta) = G(z+\zeta, z\zeta).$ Then $F$ satisfies the conditions of Theorem~\ref{thm:ScontRHP} and thus we may write $F$ as in its proof:
$$ F(z,\zeta)=  D+\frac12\matr{ C& C}\left(z\matr{ P_1&\\& P_2}
+\zeta\matr{ P_2&\\& P_1}
+\matr{ A&\\& A}\right)^{-1}\matr{ B\\ B}.$$
Inserting ${\tiny U=\begin{bmatrix} \frac{1}{\sqrt{2}} I & \frac{1}{\sqrt{2}} I \cr \frac{1}{\sqrt{2}} I & -\frac{1}{\sqrt{2}} I  \end{bmatrix}}$
in various places, we may rewrite this as
\[F(z,\zeta)=  D+\frac12\matr{ \frac{C}{\sqrt{2}}& 0 }\matr{\frac12 (z+\zeta) ( P_1+  P_2)+ A&\frac12 (z-\zeta) ( P_1-  P_2)\\
\frac12(z-\zeta) ( P_1-  P_2)&\frac12(z+\zeta) ( P_1+  P_2)+ A}
^{-1}\matr{ \frac{B}{\sqrt{2}}\\0}=\]
$$ D+ \frac12 C \Bigl[(z{+}\zeta) ( P_1{+}P_2)+ 2A -
{(z{-}\zeta)^2} ( P_1{-} P_2)\bigl[(z{+}\zeta) ( P_1{+}  P_2)+ 2A\bigr]^{-1} ( P_1{-} P_2)\Bigr]^{-1}B.$$
Let us put $s=z+\zeta, p=z\zeta$, and observe that $(z-\zeta)^2= s^2-4p$. Note that $ P_1+  P_2 =I$.
We then get
$$G(s,p)=  D+  \frac12{C} \left(s I+ 2A -
    (s^2-4p) ( P_1-  P_2)(s I+ 2A)^{-1} ( P_1-  P_2)\right)^{-1}B. $$
    Lastly, we let $P=P_1-P_2$, giving the desired realization of $G$.

For the converse, starting with the realization of $G$, and putting $P_1=\frac12(I+P)$, $P_2=\frac12(I-P)$, one can reverse the calculation above and conclude that via Theorem \ref{thm:ScontRHP} that $F$ takes on contractive values, but then so does $G$.

The case $G\in {\mathbb R}^{m\times n}(s,p)$ follows from the corresponding part of Theorem~\ref{thm:ScontRHP} by observing that all the steps below can be done over ${\mathbb R}$.

\end{proof}

\begin{thm}\label{thm:posReRHP2a}
Let $G\in \mathbb C^{n\times n} (s,p)$ be a rational  matrix  function. Then the following conditions are equivalent:
\begin{enumerate}[\rm (i)]
\item $G$ has no singularities in $\mathbb{GH}$, the matrix $G(s,p)$ has nonnegative real part
for any $(s,p)\in \mathbb{GH}$,
and $\lim_{(s,p)\to\infty} G(s,p)$ exists;
\item $G$ admits a realization of the form
\begin{equation}\label{eq:realization3} 
G(s,p)=  D+  \frac12{C} \left(s I+ 2A -
    (s^2-4p) P(s I+ 2A)^{-1} P\right)^{-1}B.
\end{equation}
where the block matrix $\matr{A&B\\C&D} \in\Cmats{(k+n)}{(k+n)}$ is such that
\begin{equation*}
\matr{
A+A^* & C^*- B \\ C- B^* & D+D^*
} \geq 0
\end{equation*}
and $-I\le P \le I$.
\end{enumerate}
Additionally, if $G\in {\mathbb R}^{n\times n}(s,p)$, then the matrices $A,B,C,D$ and $P$ can be chosen as real.
\end{thm}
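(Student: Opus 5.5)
The plan mirrors the proof of Theorem~\ref{thm:contrGH}, with Theorem~\ref{thm:posReRHPsym} (together with Theorem~\ref{thm:posReRHP}) taking the place of Theorem~\ref{thm:ScontRHP}.

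For (i)$\Rightarrow$(ii), set $F(z,\zeta):=G(z+\zeta,z\zeta)$. The symmetrization map sends $\RHPtwo$ onto $\mathbb{GH}$, so $F$ is rational, has no poles in $\RHPtwo$, has nonnegative real part there, and is symmetric.

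Next I have to check that $\lim_{t\to\infty}F(t,t)=\lim_{t\to\infty}G(2t,t^2)$ exists. This follows from the assumed existence of $\lim_{(s,p)\to\infty}G(s,p)$, since $(2t,t^2)\to\infty$.

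Theorem~\ref{thm:posReRHP} then gives a realization $F=\tilde D+\tilde C(z\tilde P_1+\zeta\tilde P_2+\tilde A)^{-1}\tilde B$ satisfying \eqref{eq:kyp0}. Following the proof of Theorem~\ref{thm:posReRHPsym}, I pass to the doubled realization
\[
F(z,\zeta)=\tilde D+\tfrac12\matr{\tilde C&\tilde C}\Bigl(z\matr{\tilde P_1&\\&\tilde P_2}+\zeta\matr{\tilde P_2&\\&\tilde P_1}+\matr{\tilde A&\\&\tilde A}\Bigr)^{-1}\matr{\tilde B\\\tilde B}.
\]
I conjugate by the unitary $\frac1{\sqrt2}\matr{I&I\\I&-I}$, exactly as in the proof of Theorem~\ref{thm:contrGH}. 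This gives a $2\times2$ block matrix whose diagonal blocks are $\frac12(z+\zeta)I+\tilde A$ and whose off-diagonal blocks are $\frac12(z-\zeta)(\tilde P_1-\tilde P_2)$. Only the first block row and column are hit by $\matr{\tilde C&\tilde C}$ and $\matr{\tilde B\\\tilde B}$. Taking the Schur complement therefore yields \eqref{eq:realization3} with $A=\tilde A$, $B=\tilde B$, $C=\tilde C$, $D=\tilde D$ and $P=\tilde P_1-\tilde P_2$, using $(z-\zeta)^2=s^2-4p$.

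The remaining conditions are then immediate. The KYP inequality is just \eqref{eq:kyp0} for the original realization. Since $0\le\tilde P_j\le I$ and $\tilde P_1+\tilde P_2=I$, we get $-I\le P\le I$.

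For (ii)$\Rightarrow$(i), put $P_1=\frac12(I+P)$ and $P_2=\frac12(I-P)$; these are positive semidefinite and sum to $I$. Reversing the computation shows that $G(z+\zeta,z\zeta)$ equals the doubled realization built from $(A,B,C,D,P_1,P_2)$. The final step of the proof of Theorem~\ref{thm:posReRHPsym} shows that this doubled colligation again satisfies \eqref{eq:kyp0}. Theorem~\ref{thm:posReRHP} then gives that $F$ has no poles in $\RHPtwo$ and has nonnegative real part there, hence so does $G$ on $\mathbb{GH}$. One caveat is that the Schur-complement step needs $sI+2A$ invertible. This holds because $\re(sI+2A)\ge \re(s)I>0$ for $(s,p)\in\mathbb{GH}$, using $A+A^*\ge0$ and $\re s>0$. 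The regularity claim also uses that singularities of $G$ in $\mathbb{GH}$ would pull back to singularities of $F$, since the symmetrization map is surjective and open.

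I expect the main obstacle to be the limit condition $\lim_{(s,p)\to\infty}G(s,p)$ in the direction (ii)$\Rightarrow$(i). The natural approach is to show that the correction term tends to $0$, but $(s,p)\to\infty$ within $\mathbb{GH}$ allows one of $z,\zeta$ to stay bounded. For instance, with $z\to\infty$ and $\zeta$ fixed, the realization tends to $D+C(zP_1+\zeta P_2+A)^{-1}B$ evaluated at $z=\infty$, which need not be $D$ unless $P_1$ is invertible. I would therefore interpret the limit along $s\to\infty$ with $z,\zeta$ both tending to infinity (as in Theorem~\ref{thm:posReRHP}, along the diagonal $(2t,t^2)$). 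Along that path $(sI+2A)^{-1}=O(1/t)$ and $(s^2-4p)=0$, so $G\to D$. Only the diagonal limit is needed for the (i)$\Rightarrow$(ii) direction anyway. The real case follows because every step above is carried out over $\mathbb R$, as in Theorem~\ref{thm:posReRHPsym}.
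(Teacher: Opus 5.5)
Your route is the paper's own: pull back to $F(z,\zeta)=G(z+\zeta,z\zeta)$, realize $F$ by Theorem~\ref{thm:posReRHP}, double the realization as in the proof of Theorem~\ref{thm:posReRHPsym}, conjugate by $\Phi=\frac1{\sqrt2}\matr{I&I\\I&-I}$, take a Schur complement, and reverse the computation for the converse. Your remark on the limit is correct, and the paper passes over it. Condition (ii) only forces the limit along $(2t,t^2)$; your example with $z\to\infty$ and $\zeta$ fixed shows nothing stronger can hold. So the diagonal reading is the only one under which the equivalence can be true.

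\textbf{The gap: the identification with \eqref{eq:realization3}.} Taking $A=\tilde A$, $B=\tilde B$, $C=\tilde C$ does not give \eqref{eq:realization3}. We have $\matr{\tilde C&\tilde C}\Phi=\matr{\sqrt2\,\tilde C&0}$ and $\Phi\matr{\tilde B\\ \tilde B}=\matr{\sqrt2\,\tilde B\\ 0}$. Write $R=sI+2\tilde A-(s^2-4p)P(sI+2\tilde A)^{-1}P$; the $(1,1)$ block of the inverse of the conjugated matrix is then $2R^{-1}$. So the computation actually gives $G=\tilde D+2\tilde CR^{-1}\tilde B$. This is already visible for $P=0$, where $F=\tilde D+\tilde C(\frac s2I+\tilde A)^{-1}\tilde B$. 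The paper's proof contains the same slip: in the proof of Theorem~\ref{thm:contrGH}, $\frac{C}{\sqrt2}$ and $\frac{B}{\sqrt2}$ should be $\sqrt2\,C$ and $\sqrt2\,B$.

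\textbf{Why this is not cosmetic.} With the printed factor $\frac12$, (i)$\Rightarrow$(ii) is false, and no rescaling of $B,C$ can fix it. Suppose $G=D+\frac12CR^{-1}B$ with data as in (ii). Then $D=\lim_tG(2t,t^2)$. Moreover $D+2CR^{-1}B=4G-3D$ has nonnegative real part on $\mathbb{GH}$. Indeed, at $(z+\zeta,z\zeta)$ it is the average of $D+C(zP_1+\zeta P_2+A)^{-1}B$ and $D+C(zP_2+\zeta P_1+A)^{-1}B$ with $P_{1,2}=\frac12(I\pm P)$, and both are covered by Theorem~\ref{thm:posReRHP}.

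Now take $G(s,p)=\frac{s}{s+2}$, which satisfies (i) with $D=1$. The above would force $\frac{s-6}{s+2}$ to have nonnegative real part, yet it equals $-\frac53$ at $(s,p)=(1,\frac14)$. Under the strict reading of the limit, $G=\frac18+\frac1{1+s+p}$ does the same job. Here $\re\frac1{(1+z)(1+\zeta)}>-\frac18$ gives (i), while $\re(4G-\frac38)\to-\frac38$ as $z=\zeta\to i\sqrt3$.

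\textbf{The fix.} Replace $\frac12C$ by $2C$ in \eqref{eq:realization3}, i.e.\ $G=D+C\bigl(\frac s2I+A-\frac{s^2-4p}4P(\frac s2I+A)^{-1}P\bigr)^{-1}B$. The same correction applies to Theorem~\ref{thm:contrGH}. With this change your argument is complete in both directions. With the printed constant, (ii)$\Rightarrow$(i) happens to remain true. However, it follows only via $G=\frac34D+\frac14(D+2CR^{-1}B)$ together with $\re D\ge0$, not by literally reversing the computation.
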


\begin{proof} 

Given $G(s,p)$ as stated, we let $F(z,\zeta) = G(z+\zeta, z\zeta).$ Then $F$ satisfies the conditions of Theorem \ref{thm:posReRHP} and thus we may write $F$ as in its proof:
$$ F(z,\zeta)=  D+\frac12\matr{ C& C}\left(z\matr{ P_1&\\& P_2}
    +\zeta\matr{ P_2&\\& P_1}
    +\matr{ A&\\& A}\right)^{-1}\matr{ B\\ B}.$$
Performing the same calculations as in the proof of Theorem \ref{thm:contrGH}, we arrive at \eqref{eq:realization3}.

(ii)$\Rightarrow$(i): Reversing the computations above, we get that $f$ satisfies the conditions of Theorem \ref{thm:posReRHP}.

The case $G\in {\mathbb R}^{n\times n}(s,p)$ follows from the corresponding part of Theorem~\ref{thm:posReRHP}  by observing that all the steps above can be done over ${\mathbb R}$.
\end{proof}

\begin{thm}Let $G\in \mathbb C^{n\times n} (s,p)$ be a rational  matrix valued  function. Then the following conditions are equivalent:
\begin{enumerate}[\rm (i)]
\item $G$ does not have any singularities in $\mathbb G$ and the matrix $G(s,p)$ has nonnegative real part for any $(s,p) \in \mathbb{G}$;
\item $G$ admits a realization of the form
\begin{equation}\label{eq:gsp}
G(s,p)= D+C \bigl[I-\frac{s}{2}A -\frac14 (s^2-4p)PA(I-\frac{s}{2}A)^{-1}PA\bigr]^{-1}(\frac{s}{2}+\frac14(s^2-4p)PA(I-\frac s2 A)^{-1}P)B.
\end{equation}
where the block matrix $\matr{A&B\\C&D} \in\Cmats{(k+n)}{(k+n)}$ is such that
\begin{equation}\label{eq:c6a}
\matr{
I - A^* A & -A^* B+ C^* \\ -B^*A+ C & D+D^*- B^*B
} \geq 0
\end{equation}
and $P=P^*\in \sqmat{n}$ satisfies $-I\le P\le I$.
\end{enumerate}
Additionally, if $G\in {\mathbb R}^{n\times n}(s,p)$, then the matrices $A,B,C,D$ and $P$ in {\rm (ii)} can be chosen as real.
\end{thm}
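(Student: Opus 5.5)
The argument runs parallel to the proofs of Theorem~\ref{thm:contrGH} and Theorem~\ref{thm:posReRHP2a}, with the bidisc results in place of the bihalfplane ones.

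\emph{(i)$\Rightarrow$(ii).} Put $F(z,\zeta):=G(z+\zeta,z\zeta)$. Then $F$ is rational and symmetric, has no poles in $\DD^2$, and has nonnegative real part there. Theorem~\ref{thm:DDposRe} therefore gives a realization \eqref{eq:rDisc} with system matrix $\matr{\tilde A&\tilde B\\ \tilde C&\tilde D}$ satisfying \eqref{eq:c6}. Following Theorem~\ref{thm:DDposReSym}, which copies the construction in the proof of Theorem~\ref{thm:ScontRHP}, I symmetrize this realization. Write $Z(z,\zeta)=z\tilde P_1+\zeta\tilde P_2$ and average the realization over $(z,\zeta)$ and $(\zeta,z)$. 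This yields
\[
A=\tilde A\oplus\tilde A,\qquad B=\tfrac1{\sqrt2}\matr{\tilde B\\ \tilde B},\qquad C=\tfrac1{\sqrt2}\matr{\tilde C&\tilde C},\qquad D=\tilde D,
\]
with $P_1=\tilde P_1\oplus\tilde P_2$ and $P_2=\tilde P_2\oplus\tilde P_1$. Inequality \eqref{eq:c6} for the new data follows from the same splitting $T=E_1^*\tilde TE_1+E_2^*\tilde TE_2$ used in the proof of Theorem~\ref{thm:posReRHPsym}. Here $\tilde T$ is the matrix in \eqref{eq:c6}; its $(1,1)$ block is $I-\tilde A^*\tilde A$, and that block splits block-diagonally without trouble. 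Let me double-check one point. The $(2,2)$ block of $T$ is $D+D^*-B^*B=\tilde D+\tilde D^*-\tilde B^*\tilde B$. This agrees with $\tfrac12(\cdot)+\tfrac12(\cdot)$ of the corresponding block of $\tilde T$, so the splitting works.

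Next I conjugate by the unitary $V=\frac1{\sqrt2}\matr{I&I\\I&-I}$, exactly as in the proof of Theorem~\ref{thm:contrGH}. With $Z=zP_1+\zeta P_2$ this gives
\[
VZV=\tfrac{s}{2}(I\oplus I)+\tfrac{z-\zeta}{2}\matr{0&P\\P&0},\qquad P=\tilde P_1-\tilde P_2,
\]
while $VAV=A$, $CV=\matr{\tilde C&0}$ and $VB=\matr{\tilde B\\0}$. Thus $F=D+C'(I-\mathcal Z\mathcal A)^{-1}\mathcal Z B'$, where $\mathcal Z=VZV$ has diagonal blocks $\frac s2 I$ and off-diagonal blocks $\frac{z-\zeta}2P$, and $C'$, $B'$ act only on the first block. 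I compute the $(1,1)$ entry of $(I-\mathcal Z\mathcal A)^{-1}\mathcal Z$ by a Schur complement. The odd powers of $z-\zeta$ cancel, and only $(z-\zeta)^2=s^2-4p$ remains, which produces exactly the expression \eqref{eq:gsp} with $A=\tilde A$, $B=\tilde B$, $C=\tilde C$, $D=\tilde D$. Since $\tilde P_1,\tilde P_2\ge0$ and $\tilde P_1+\tilde P_2=I$, we get $-I\le P\le I$. The size of $P$ is $k\times k$ (the statement's $\sqmat{n}$ appears to be a typo).

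The main obstacle is this Schur-complement bookkeeping. Concretely, I must check that the $(2,2)$ block $I-\frac s2\tilde A$ (where the cross terms enter with $P\tilde A$) is invertible for $(z,\zeta)\in\DD^2$. I also need that the resulting formula matches \eqref{eq:gsp}, including the right factor $\bigl(\frac s2+\frac14(s^2-4p)P\tilde A(I-\frac s2\tilde A)^{-1}P\bigr)$. Invertibility holds because $\|\tilde A\|\le1$ and $|s/2|<1$. For the identity, I would first verify it formally as rational functions and then argue by continuity and density wherever the inverses exist.

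\emph{(ii)$\Rightarrow$(i).} Set $P_1=\frac12(I+P)$ and $P_2=\frac12(I-P)$. These are positive semidefinite with sum $I$. Reversing the computation, $G(z+\zeta,z\zeta)$ becomes a realization of the form \eqref{eq:rDisc} on the doubled space with data satisfying \eqref{eq:c6}. By Theorem~\ref{thm:DDposRe}, (ii)$\Rightarrow$(i), this function has nonnegative real part on $\DD^2$, hence $G$ does on $\mathbb G$. The real case follows because every step stays over $\mathbb R$, using the real versions of Theorems~\ref{thm:DDposRe} and \ref{thm:DDposReSym}.
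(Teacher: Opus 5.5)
Your proposal is correct and follows the paper's proof. You apply Theorem~\ref{thm:DDposRe} to $F(z,\zeta)=G(z+\zeta,z\zeta)$, average over $(z,\zeta)\leftrightarrow(\zeta,z)$, conjugate by $\frac1{\sqrt2}\matr{I&I\\I&-I}$, extract the $(1,1)$ block of $(I-\mathcal Z\mathcal A)^{-1}\mathcal Z$ by a Schur complement (this does reproduce \eqref{eq:gsp}), and reverse the steps for the converse; your observation that $P$ must be $k\times k$ is right, and the splitting check of \eqref{eq:c6} for the doubled data is superfluous in (i)$\Rightarrow$(ii), since the final realization uses the original $\tilde A,\tilde B,\tilde C,\tilde D$, though it is exactly what justifies the reversal in (ii)$\Rightarrow$(i).
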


\begin{proof} 
(i)$\Rightarrow$(ii):
    Given $G(s,p)$ with nonnegative real part, we let $F(z,\zeta) = G(z+\zeta, z\zeta).$ Then $F$ satisfies the conditions of Theorem \ref{thm:DDposRe} and thus we may write $F$ as in \eqref{eq:rDisc} and \eqref{eq:c6}. As $F(z,\zeta)=\frac12(f(z,\zeta)+f(\zeta,z))$ we may write 
   $$ \tiny{F(z,\zeta)=  D+\frac12\matr{ C& C}\left(I-(z\matr{ P_1&\\& P_2}
    +\zeta\matr{ P_2&\\& P_1})
    \matr{ A&\\& A}\right)^{-1}(z\matr{ P_1&\\& P_2}
    +\zeta\matr{ P_2&\\& P_1})\matr{ B\\ B}}.$$
    Inserting ${\tiny U=\begin{bmatrix} \frac{1}{\sqrt{2}} I & \frac{1}{\sqrt{2}} I \cr \frac{1}{\sqrt{2}} I & -\frac{1}{\sqrt{2}} I  \end{bmatrix}}$ in various places, we may rewrite this as
    $$F(z,\zeta)=  D+\frac12\matr{ \sqrt{2}C & 0 }\matr{I-\frac12 (z+\zeta) ( P_1+  P_2) A&-\frac12 (z-\zeta) ( P_1-  P_2)A\\-\frac12(z-\zeta) ( P_1-  P_2)A&I-\frac12(z+\zeta) ( P_1+  P_2) A}
    ^{-1}\times $$ $$\matr{\frac12 (z+\zeta) ( P_1+  P_2) &\frac12 (z-\zeta) ( P_1-  P_2)\\\frac12(z-\zeta) ( P_1-  P_2)&\frac12(z+\zeta) ( P_1+  P_2) }\matr{ \sqrt{2}B\\0}.$$
    Using that $P_1+P_2=I$, denoting $P=P_1-P_2$ and 
putting $s=z+\zeta, p=z\zeta$, and observing that $(z-\zeta)^2= s^2-4p$. As $ P_1+  P_2 >0$, we may rewrite the above as \eqref{eq:gsp}.

(ii)$\Rightarrow$(i): One can reverse the above arguments, and obtain that $F(z,\zeta)$ satisfies the conditions of Theorem \ref{thm:DDposRe}. 

The case $G\in {\mathbb R}^{n\times n}(s,p)$ follows from the corresponding part of Theorem~\ref{thm:DDposRe} by observing that all the steps above can be done over ${\mathbb R}$.

\end{proof}

\section{Determinantal representations}\label{sec:det}

Determinantal representations for polynomials without zeros in the bidisk were established in \cite[Theorem 2.1]{GKVW}, where it is shown that a polynomial $p(z_1, z_2)$ of bidegree $(n_1, n_2)$ with $p(0,0)=1$ has no zeros in $\mathbb{D}^2$ if and only if it can be written as $p(z_1, z_2)=\det(I-KZ)$, where $Z$ is an $(n_1+n_2) \times (n_1+n_2)$ diagonal matrix with $z_1, z_2$ on the diagional and $K$ is a contraction. The analogous representation for the bihalfplane, of the form $p(z_1,z_2)=c\det(A+z_1B_1+z_2B_2)$ with $\operatorname{Im} A \geq 0$, $B_1, B_2 \geq 0$ and $B_1+B_2=I$ was obtained in \cite[Theorem 3.2]{knese2019}. Symmetric polynomials without zeros in $\mathbb{D}^2$ and polynomials without zeros in the symmetrized bidisk $\mathbb{G}$ were treated in \cite{BW}, obtaining in both cases contractive determinantal representations. In this section we obtain the bihalfplane analogs of these results, namely determinantal representations for symmetric polynomials without zeros in $\RHPtwo$ and for polynomials without zeros in $\mathbb{GH}$.

\begin{thm}\label{phasdetrep}
Let $p(z_1,z_2)$ be a symmetric polynomial of degree $(n, n)$ without roots in $\RHPtwo$. Then there exist a constant $c\neq 0$, complex matrices $\alpha_0, \alpha_1, \alpha_2$, with $\alpha_1,\alpha_2$ positive semidefinite so that $\alpha_1+ \alpha_2 =I $, and 
$ {\rm Re}\ \alpha_0 = \frac{1}{2}(\alpha_0+\alpha_0^*) \ge 0$, an integer $0\le l \le n$, and a unitary involution $U$ so that 
$$ (z_1+1)^l(z_2+1)^l p(z_1, z_2)  = c\det( \alpha_0 + z_1\alpha_1 + z_2 \alpha_2 ) $$
and $U\alpha_0U=\alpha_0, U\alpha_1U=\alpha_2$.

In the case that $p(z_1,z_2)\in{\mathbb R}[z_1,z_2]$, the matrices $\alpha_0, \alpha_1$ and $U$ can be chosen to be real.
\end{thm}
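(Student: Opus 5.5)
Following the pattern of the earlier sections, I will pass through a symmetric function with nonnegative real part on $\RHPtwo$ and read off the determinant from the denominator of its realization. Knese's determinantal representation is not used directly, since it gives no symmetry.

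First, write $p$ in terms of its reflection. Set $\tilde p(z_1,z_2)=\overline{p(-\bar z_1,-\bar z_2)}$, which is symmetric of the same bidegree. Consider
\[
F=\frac{p-\tilde p}{p+\tilde p}
\quad\text{or, better,}\quad
F=\frac{\tilde p}{p}.
\]
Since $p$ has no zeros in $\RHPtwo$, the standard one-variable argument applied on slices gives $|\tilde p|\le |p|$ on $\RHPtwo$. Hence $\tilde p/p$ is a symmetric Schur function on $\RHPtwo$. Then
\[
H=\frac{1-\tilde p/p}{1+\tilde p/p}=\frac{p-\tilde p}{p+\tilde p}
\]
is symmetric with nonnegative real part. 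Its denominator $p+\tilde p$ may vanish on the boundary and may lose degree, and the limit $\lim_{t\to\infty}H(t,t)$ must also exist. To handle both, I will first reduce to the case where $p$ has no zeros on the closed set away from infinity, by factoring out boundary factors. This is where the factors $(z_1+1)^l(z_2+1)^l$ enter: I will instead multiply by $(z_1+1)^l(z_2+1)^l$ with $l$ equal to the degree deficiency at infinity, so that the function $p(z)/((z_1+1)^n(z_2+1)^n)$ behaves well.

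The cleaner route is to apply Theorem~\ref{thm:ScontRHP} to the symmetric Schur function
\[
F(z)=\frac{\tilde p(z)}{p(z)},
\]
possibly after multiplying by the inner factor so that $F$ is regular at $\infty$. This gives a realization $F=D+C(z_1P_1+z_2P_2+A)^{-1}B$ satisfying \eqref{eq:c1} and \eqref{eq:symre}. The key step is then a minimality/degree argument in the spirit of \cite{BW} and \cite{knese2019}. One shows that one may choose $k=2n$ (or $k$ tied to the bidegree) so that
\[
\det(z_1P_1+z_2P_2+A)=c^{-1}(z_1+1)^l(z_2+1)^l\,p(z_1,z_2),
\]
where the extra $(z_j+1)$ factors come from the dimensions of $P_1,P_2$ exceeding the degree. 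Concretely, I would use that $\det(z_1P_1+z_2P_2+A)$ has bidegree at most $(\operatorname{rank}P_1,\operatorname{rank}P_2)$ and is divisible by $p$. If $p$ is irreducible this is immediate; in general I would factor $p$ into symmetric/paired irreducible factors and take direct sums. Any surplus of degree that cannot be matched by $p$ is filled in by adjoining blocks with $A=I$ on $\operatorname{ran}P_1$ and $\operatorname{ran}P_2$ in symmetric pairs. Each such pair contributes $(z_1+1)(z_2+1)$, keeps $\re A\ge0$, and is compatible with $U$ swapping the two pieces. Then set $\alpha_0=A$, $\alpha_j=P_j$, and keep the same $U$. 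We have $\re\alpha_0\ge0$ from \eqref{eq:c1}, and $U\alpha_0U=\alpha_0$, $U\alpha_1U=\alpha_2$.

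The main obstacle is the divisibility/degree matching: showing that the denominator of the realization is exactly $p$ times such $(z_j+1)$ factors, and not a larger polynomial with spurious factors. I would first try to get this from Knese's theorem \cite[Theorem 3.2]{knese2019} directly. It gives $p=c\det(A'+z_1B_1+z_2B_2)$ with exact size and $\operatorname{Im}A'\ge0$; rotating by $i$ gives $\re\ge0$. I would then symmetrize it as in the proof of Theorem~\ref{thm:ScontRHP}, via $p^2=p(z_1,z_2)p(z_2,z_1)$ and a block diagonal. That produces $p^2$, however, so a square-root/halving argument would be needed. For this reason the realization route above, with careful bookkeeping, seems preferable. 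The real case follows from the real versions of the cited theorems.
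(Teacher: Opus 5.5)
There is a genuine gap, and it sits exactly at the step you yourself call the main obstacle. Nothing in your argument produces the identity $\det(z_1P_1+z_2P_2+A)=c^{-1}(z_1+1)^l(z_2+1)^l p$.

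\begin{itemize}
\item Theorem~\ref{thm:ScontRHP} gives no control over $k$ or over the denominator of the realization.
\item The realization its proof constructs is a block-diagonal doubling, $A=\tilde A\oplus\tilde A$, $P_1=\tilde P_1\oplus\tilde P_2$, $P_2=\tilde P_2\oplus\tilde P_1$. Its determinant is therefore $q(z_1,z_2)\,q(z_2,z_1)$ with $q=\det(z_1\tilde P_1+z_2\tilde P_2+\tilde A)$. This is the same squaring you rejected in the Knese route, so the ``preferable'' route does not avoid it.
\item Divisibility runs only one way. Writing $F=N/\det(z_1P_1+z_2P_2+A)$ shows only that the reduced denominator of $F$, namely $p/\gcd(p,\tilde p)$, divides the determinant. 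The determinant may carry further stable factors, and adjoining $(z_1+1)(z_2+1)$ blocks can only add factors, never remove them. Irreducibility of $p$ does not make this ``immediate''.
\item $\tilde p/p$ forgets every factor $p$ shares with its reflection. For example, $p=z_1+z_2$ gives $F\equiv-1$, so no realization of $F$ can recover such factors of $p$.
\end{itemize}

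The missing ingredient is a symmetric determinantal representation of controlled size. That is where the ``halving'' you mention is actually carried out. The paper Cayley-transforms $p$ to a symmetric polynomial of bidegree $(n,n)$ with no zeros in $\DD^2$. It then applies \cite[Theorem 2.2]{BW}, which writes that polynomial as a constant times $\det\bigl(I-A(z_1I_{n+l}\oplus z_2I_{n+l})\bigr)$, with $A=\matr{A_1&A_2\\A_2&A_1}$ a contraction and $0\le l\le n$. That result obtains the symmetric form without squaring: it rewrites the polynomial in the variables $z_1+z_2$ and $(z_1-z_2)^2$ (equivalently, passes to $\mathbb G$) and applies the non-symmetric contractive representation there.

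Undoing the Cayley transform gives $(z_1+1)^l(z_2+1)^lp=c\det\bigl(I+A+(I-A)Q(z)\bigr)$ with $Q(z)=z_1I\oplus z_2I$. So the $(z_j+1)^l$ factors come from the size surplus $l$, not from padding. Lemma~\ref{lemma} then splits $A=P+B$, with $P$ an orthogonal projection and $B$ a contraction without eigenvalue $1$, both keeping the $\matr{X&Y\\Y&X}$ structure. Compressing to $\ker P$ and setting $\alpha_0=(I-B)^{-1}(I+B)$ gives $\re\alpha_0\ge0$, together with $U\alpha_0U=\alpha_0$ and $U\alpha_1U=\alpha_2$ for the swap $U$. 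Without such a size-controlled symmetric representation, or some substitute that pins the denominator down exactly and also handles the boundary factors lost in $\tilde p/p$, your outline does not reach the stated identity.
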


We first need a lemma. We let $\sigma (B)$ denote the set of eigenvalues of $B$.
\begin{lemat}\label{lemma}
  Given a contraction 
  \begin{equation}\label{A}\begin{bmatrix} A_1 & A_2\cr A_2 & A_1\end{bmatrix},\end{equation}
  there exists an orthogonal projection
  $P={\tiny \begin{bmatrix} P_1 & P_2\cr P_2 & P_1\end{bmatrix}}$ and a contraction $B={\tiny \begin{bmatrix} B_1 & B_2\cr B_2 & B_1\end{bmatrix}}$ with $1\not\in \sigma(B)$, so that $PB=0=BP$ and
  $$\begin{bmatrix} A_1 & A_2\cr A_2 & A_1\end{bmatrix}= \begin{bmatrix} P_1 & P_2\cr P_2 & P_1\end{bmatrix} + \begin{bmatrix} B_1 & B_2\cr B_2 & B_1\end{bmatrix} .$$

  In case \eqref{A} is a real matrix, $B$ and $P$ can be chosen to be real as well.
\end{lemat}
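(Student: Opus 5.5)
Let $K$ denote the contraction \eqref{A} and $J=\matr{0&I\\I&0}$, so that $K$ having the block form \eqref{A} is equivalent to $JKJ=K$. The plan is to split off the eigenspace of $K$ at eigenvalue $1$. Let $\mathcal E=\ker(I-K)$ and let $P$ be the orthogonal projection onto $\mathcal E$, and set $B:=K-P$. The facts needed are: (a) $\mathcal E$ is reducing for $K$; (b) $P$ commutes with $J$, so $P$ and $B$ have the required block structure; (c) $PB=BP=0$; (d) $B$ is a contraction with $1\notin\sigma(B)$.

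For (a), I use the standard fact already invoked in the proof of Theorem~\ref{thm:contrRHP}. For a contraction $K$, $Kx=x$ implies $K^*x=x$, because
\[
\norm{K^*x-x}^2=\norm{K^*x}^2-2\re\langle K^*x,x\rangle+\norm{x}^2=\norm{K^*x}^2-\norm{x}^2\le 0 .
\]
Hence $\mathcal E$ is invariant under both $K$ and $K^*$, and therefore reducing. Thus $K=I_{\mathcal E}\oplus K'$ with respect to $\mathcal E\oplus\mathcal E^\perp$, which gives $KP=PK=P$. Consequently $PB=P(K-P)=P-P=0$ and likewise $BP=0$, proving (c). Moreover $B=0\oplus K'$, so $\norm{B}\le\norm{K}\le 1$. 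Also $1\notin\sigma(B)$: any $x$ with $Bx=x$ satisfies $x=Bx\in\mathcal E^\perp$ (since $B$ vanishes on $\mathcal E$ and maps $\mathcal E^\perp$ into itself), and then $Kx=K'x=x$ forces $x\in\mathcal E$, so $x=0$. This proves (d).

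For (b), since $JKJ=K$ and $J=J^{-1}$, the subspace $\mathcal E$ is $J$-invariant: if $Kx=x$ then $K(Jx)=JKx=Jx$. Because $J$ is unitary, $\mathcal E^\perp$ is also $J$-invariant, so $J$ commutes with $P$. Writing $P=\matr{P_{11}&P_{12}\\P_{21}&P_{22}}$, the identity $JPJ=P$ gives $P_{22}=P_{11}$ and $P_{21}=P_{12}$, i.e.\ $P=\matr{P_1&P_2\\P_2&P_1}$. Then $B=K-P$ inherits the form $\matr{B_1&B_2\\B_2&B_1}$ with $B_1=A_1-P_1$ and $B_2=A_2-P_2$.

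For the real case, if $K$ is real then $\mathcal E$ is the complexification of the real kernel $\ker_{\mathbb R}(I-K)$. The orthogonal projection onto it is therefore a real matrix, and $B=K-P$ is real as well.

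No step appears genuinely hard. The only point requiring care is the reducing-subspace argument in (a), which simultaneously delivers $PB=BP=0$, the contractivity of $B$, and $1\notin\sigma(B)$.
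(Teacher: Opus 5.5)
Your proof is correct and yields exactly the same $P$ and $B$ as the paper, but it obtains the block structure by a different mechanism.

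The paper first conjugates by the Hermitian unitary $\Phi=\frac{1}{\sqrt2}\matr{I&I\\I&-I}$, which turns \eqref{A} into $(A_1+A_2)\oplus(A_1-A_2)$. It then splits off the reducing eigenspace at $1$ of each contraction $A_1\pm A_2$ separately, writing $A_1\pm A_2=V_\pm V_\pm^*+W_\pm K_\pm W_\pm^*$. Conjugating back, the symmetric block form is read off from the explicit formulas $P_1\pm P_2=V_\pm V_\pm^*$ and $B_1\pm B_2=W_\pm K_\pm W_\pm^*$.

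You instead split off $\ker(I-K)$ for the whole matrix at once. You get the block form from $JK=KJ$, which forces $JP=PJ$. Since $\Phi$ maps $\ker(I-(A_1+A_2))\oplus\ker(I-(A_1-A_2))$ onto $\ker(I-K)$, the two projections coincide. Diagonalizing $J$ via $\Phi$ and working in the commutant of $J$ are two views of the same symmetry.

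Your route is shorter and coordinate-free. It identifies $P$ canonically as the projection onto the fixed space of $K$, so no choice of unitaries $U_\pm$ is needed. It also proves the reducing-subspace fact rather than quoting it, and it makes the real case immediate. The paper's route has the advantage of exhibiting $P_1\pm P_2$ and $B_1\pm B_2$ explicitly in terms of the two contractions $A_1\pm A_2$.
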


\begin{proof}
    Using unitary similarity with the unitary Hermitian matrix $\Phi=\frac{1}{\sqrt{2}}{\tiny \begin{bmatrix} I & I \cr I & -I  \end{bmatrix}}$
we convert \eqref{A} to $(A_1+A_2) \oplus (A_1-A_2)$.

As $A_1 \pm A_2$ are contractions, their eigenspaces at eigenvalue 1 are reducing subspaces, and thus we can find unitary matrices $U_\pm$ so that 
$$U_\pm^* (A_1\pm A_2) U_\pm = \begin{bmatrix} I_{m_\pm} & 0 \cr 0 & K_\pm \end{bmatrix},$$
where $K_\pm$ are contractions without 1 as an eigenvalue. Decomposing $U_\pm = \begin{bmatrix} V_{\pm} & W_{\pm} \end{bmatrix}$, where $V_{\pm}$ has $m_\pm$ columns,
we get that 
$$ A_1\pm A_2 = U_\pm \begin{bmatrix} I_{m_\pm} & 0 \cr 0 & K_\pm \end{bmatrix} U_\pm^* = V_\pm V_\pm^* + W_\pm K_\pm W_\pm^*. $$
Note that $W_\pm K_\pm W_\pm^*$  are contractions without 1 as eigenvalue, $V_\pm V_\pm^*$ are orthogonal projections and $V_\pm V_\pm^*W_\pm K_\pm W_\pm^*=0, $ since $V_\pm^*W_\pm =0$, due to $U_\pm$ being unitary. These properties persist when taking direct sums and performing a unitary similarity using $\Phi$.
Thus
$$ \begin{bmatrix} A_1 & A_2\cr A_2 & A_1\end{bmatrix} = \Phi \begin{bmatrix} A_1 + A_2 & \cr 0 & A_1 - A_2\end{bmatrix} \Phi^* = \frac12 \begin{bmatrix} V_+V_+^*+ V_- V_-^* & V_+V_+^*- V_- V_-^* \cr V_+V_+^* - V_- V_-^* & V_+V_+^*+ V_- V_-^*\end{bmatrix} $$ $$+ \frac12 \begin{bmatrix} W_+K_+W_+^*+ W_- K_-W_-^* & W_+K_+W_+^*- W_- K_-W_-^* \cr W_+K_+W_+^*- W_- K_-W_-^* & W_+K_+W_+^*+ W_- K_-W_-^*\end{bmatrix}=: P+B$$
gives the desired sum decomposition. 

It is easy to check that all the steps above can be done within the reals when \eqref{A} is real, yielding the last statement of the lemma.\end{proof}

Let $\DD = \{ z \in \CC : |z|<1\}$ and ${\mathbb T} = \{ z \in \CC : |z|=1\}$.
Recall that the map
$$ \phi(z) =  \frac{1+z}{1-z} $$ maps the unit disk conformally onto the right half plane sending ${\mathbb T}$ to $i{\mathbb R} \cup \{ \infty \}$. The inverse is given by
$$ \phi^{-1}(w)= \frac{w-1}{w+1}. $$

\begin{proof}[Proof of Theorem \ref{phasdetrep}.]
The last statement of the theorem is proven by observing that if
the polynomial $p(z_1,z_2)$ has real coefficients,
all the steps below can be done over ${\mathbb R}$.

Put
\begin{equation}\label{ptilde}
\tilde{p} (z_1, z_2) = p\Big(\frac{1+z_1}{1-z_1},
\frac{1+z_2}{1-z_2}\Big)\prod_{j=1}^2(1-z_j)^{n}. 
\end{equation} 
Then, for $z_j\in\DD$, we have
$$ \tilde{p} (z_1, z_2) = p\Big( \frac{1+z_1}{1-z_1}, \frac{1+z_2}{1-z_2}\Big)\prod_{j=1}^2(1-z_j)^{n}  \neq 0. $$
 Thus $\tilde{p}$ is a symmetric polynomial without roots on ${\DD}^2$. We may now apply \cite[Theorem 2.2]{BW} (or Theorem \ref{detrepD2} in the real case) giving us the existence of a determinantal representation
\begin{equation}\label{dr}
\tilde{p}(z_1,z_2 ) = \tilde{p}(0,0) \det \left( I - \begin{bmatrix} A_1 & A_2\cr A_2 & A_1\end{bmatrix} \begin{bmatrix} z_1I_{n+l} & 0\cr 0 & z_2 I_{n+l} \end{bmatrix} \right)  \end{equation}
with $0\le l\le n$ and
\begin{equation}\label{contr} A=\begin{bmatrix} A_1 & A_2\cr A_2 & A_1\end{bmatrix} \end{equation}
a contraction. 
Going back to the original polynomial $p(z_1,z_2)$, we get 
for $z_1, z_2 \in \RHP$, 

\begin{align*} p(z_1,z_2)\prod_{j=1}^2 (z_j+1)^{l}&= \frac{1}{2^{2n}}\tilde{p} \Big(\frac{z_1-1}{z_1+1}, \frac{z_2-1}{z_2+1}\Big) \prod_{j=1}^2 (z_j+1)^{n+l} \\
&=  \frac{\tilde{p}(0,0)}{2^{2n}} \det\Big(\left(\oplus_{j=1}^2 (z_j+1)I_{n+l}\right)-A\left(\oplus_{j=1}^2 (z_j-1)I_{n+l}\right)\Big)\\
&= \frac{\tilde{p}(0,0)}{2^{2n}} \det( I+A + (I-A)(\oplus_{j=1}^2 z_jI_{n+l})), 
\end{align*}
Now we use the decomposition $A=P+B$ from Lemma \ref{lemma}. 
Recall that $P$ is an orthogonal projection and $P^\perp BP^\perp =B$, where  $P^\perp:=I-P$. 
Observe that with $Q(z)=\oplus_{j=1}^2 z_jI_{n+l} $ the matrix under the determinant has the following form with respect to the decomposition $\operatorname{Ran} P\oplus\ker P$:
 $$
 \begin{bmatrix}
     2I_{\text{Ran} P} &  0 \\
   *  & I_{\ker P}+B+ P^\perp Q(z)P^\perp - BQ(z)P^\perp   
    \end{bmatrix} 
 $$
$$
= \begin{bmatrix}
     2I_{\text{Ran} P} &  0 \\
   *  & I_{\ker P}+B+ (I-B)P^\perp Q(z)P^\perp   
    \end{bmatrix} 
$$

Therefore, 
\begin{align*}
 p(z_1,z_2)\prod_{j=1}^2 (z_j+1)^{l}&= \frac{\tilde{p}(0,0)}{2^{2n-{\rm rank} P}} \det\Big( I_{\ker P}+B+ (I-B)P^\perp Q(z)P^\perp \Big) \\ 
 &=  
\frac{\tilde{p}(0,0)}{2^{2n-{\rm rank} P}}\det(I-B) \det \Big((I-B)^{-1}(I+B) + P^\perp Q(z)P^\perp \Big).
\end{align*}
Notice that
$$
P^\perp Q(z)P^\perp=
\begin{bmatrix}
   I_{n+l}-P_1 & -P_2\\ -P_2 & I_{n+l}-P_1 
\end{bmatrix} 
\begin{bmatrix}
  z_1 I_{n+l} & 0\\ 0 & z_2I_{n+l} 
\end{bmatrix}
\begin{bmatrix}
   I_{n+l}-P_1 & -P_2\\ -P_2 & I_{n+l}-P_1 
\end{bmatrix}
$$ $$
=\begin{bmatrix}
z_1 (I_{n+\ell}-P_1)^2 + z_2 P_2^2 &
- z_1 (I_{n+\ell}-P_1)P_2 - z_2 P_2 (I_{n+\ell}-P_1)\\[6pt]
- z_1 P_2 (I_{n+\ell}-P_1) - z_2 (I_{n+\ell}-P_1)P_2 &
z_1 P_2^2 + z_2 (I_{n+\ell}-P_1)^2
\end{bmatrix}
$$
\[
=
z_1
\begin{bmatrix}
 (I_{n+\ell}-P_1)^2 & -(I_{n+\ell}-P_1)P_2\\
 -P_2 (I_{n+\ell}-P_1) & P_2^2
\end{bmatrix}
+
z_2
\begin{bmatrix}
 P_2^2 & -P_2 (I_{n+\ell}-P_1)\\
 -(I_{n+\ell}-P_1)P_2 & (I_{n+\ell}-P_1)^2
\end{bmatrix}.
\]
\[
=: z_1\alpha_1+ z_2\alpha_2
\]
Let also $\alpha_0 := (I-B)^{-1}(I+B)$,
so that the desired determinantal representation holds.
Observe that with 
$U={\tiny \begin{bmatrix} 0 & I \cr I & 0 \end{bmatrix}}$ one has  $U\alpha_0 U = \alpha_0$  to $UB=BU=BU^*$ and 
$U\alpha_1U=\alpha_2$.
\end{proof}

\begin{thm} \label{symm-bh-dr}
    Let $g(s,p)$ be a polynomial of degree $n$. Then $g(s,p)$ is without roots in $\mathbb{GH}$ if and only if there exists an integer $l \geq 0$ and a contraction $A = \begin{bmatrix} A_1 & A_2 \\ A_2 & A_1 \end{bmatrix}$ such that:
    $$(s+p+1)^l g(s,p) = c \det \left ( (I+2A_1+K) +s(I-K)+p(I-2A_1+K) \right),$$
    where $c$ is a non-zero constant and $K=(A_1+A_2)(A_1-A_2)$.

    In the case that $g(s,p)\in{\mathbb R}[s,p]$, the constant $c$ and the matrices $A_1, A_2$ and $K$ can be chosen to be real.
\end{thm}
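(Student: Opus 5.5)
The plan is to pull $g$ back to a symmetric polynomial on $\RHPtwo$, pass to the bidisc by the Cayley transform $\phi$, apply the symmetric determinantal representation of \cite[Theorem 2.2]{BW} there, and push the resulting determinant back down to the variables $(s,p)$. This is the same route as in the proof of Theorem~\ref{phasdetrep}. The one new ingredient is a Schur-complement identity that collapses the $2\times 2$ block determinant into a single determinant that is affine in $s=z_1+z_2$ and $p=z_1z_2$.

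\emph{Step 1: reduction to the bidisc.} Put $q(z_1,z_2):=g(z_1+z_2,z_1z_2)$. It is symmetric, has bidegree at most $(n,n)$, and has no roots in $\RHPtwo$ exactly when $g$ has none in $\mathbb{GH}$. Define $\tilde q$ from $q$ as in \eqref{ptilde}, so $\tilde q(w_1,w_2)=q(\phi(w_1),\phi(w_2))(1-w_1)^n(1-w_2)^n$. Then $\tilde q$ is a symmetric polynomial without roots in $\DD^2$. By \cite[Theorem 2.2]{BW} (or Theorem~\ref{detrepD2} in the real case) we get
\[
\tilde q(w)=\tilde q(0,0)\det\bigl(I-AW\bigr),\qquad W=w_1I_{N}\oplus w_2I_N,\quad N=n+l,
\]
with $A=\matr{A_1&A_2\\A_2&A_1}$ a contraction.

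\emph{Step 2: collapsing the block determinant (the key computation).} Conjugate by $\Phi=\frac1{\sqrt2}\matr{I&I\\I&-I}$. This turns $A$ into $M_+\oplus M_-$ with $M_\pm=A_1\pm A_2$, and turns $W$ into $\matr{\sigma I&\delta I\\\delta I&\sigma I}$ with $\sigma=\frac{w_1+w_2}2$ and $\delta=\frac{w_1-w_2}2$. So
\[
\det(I-AW)=\det\matr{I-\sigma M_+&-\delta M_+\\-\delta M_-&I-\sigma M_-}.
\]
Take the Schur complement with respect to the $(2,2)$ block and multiply it on the right by $I-\sigma M_-$. Since $M_-$ commutes with $(I-\sigma M_-)^{-1}$, the term $-\delta^2M_+(I-\sigma M_-)^{-1}M_-(I-\sigma M_-)$ simplifies to $-\delta^2M_+M_-$. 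This gives
\[
\det(I-AW)=\det\bigl((I-\sigma M_+)(I-\sigma M_-)-\delta^2M_+M_-\bigr)=\det\bigl(I-(w_1+w_2)A_1+w_1w_2K\bigr),
\]
using $M_++M_-=2A_1$, $K=M_+M_-$ and $\sigma^2-\delta^2=w_1w_2$. This holds wherever $I-\sigma M_-$ is invertible, so it holds identically as a polynomial identity. I expect this step to be the main obstacle. The naive factorization through the matrices $I\pm M_\pm$ fails because $M_+$ and $M_-$ need not commute, and the point of the argument is to take the Schur complement while still in the disc variables, where the needed commutation is automatic.

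\emph{Step 3: back to $(s,p)$.} Substitute $w_j=\phi^{-1}(z_j)=\frac{z_j-1}{z_j+1}$ and multiply by $(z_1+1)^N(z_2+1)^N$. Using $(1+z_1)(1+z_2)=1+s+p$, the matrix under the determinant becomes
\[
(1+s+p)I-(2p-2)A_1+(p-s+1)K=(I+2A_1+K)+s(I-K)+p(I-2A_1+K).
\]
On the left-hand side, the relation between $\tilde q$ and $q$ contributes $2^{2n}(1+z_1)^{l}(1+z_2)^l\,q(z_1,z_2)=2^{2n}(s+p+1)^lg(s,p)$. Comparing the two sides gives the claimed identity with $c=\tilde q(0,0)2^{-2n}$, first for $(z_1,z_2)\in\RHPtwo$, then as polynomials in $(s,p)$, since the symmetrization map has open image.

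\emph{Step 4: converse and reality.} Suppose the representation holds with $A$ a contraction. Reversing Step 3 shows that the right-hand side equals $c'\det(I-AW)$ times powers of $(1+z_j)$, where $W$ is formed from $w_j=\phi^{-1}(z_j)$. For $z\in\RHPtwo$ we have $w\in\DD^2$, so $\|AW\|<1$ and this determinant is nonzero. Moreover $s+p+1=(1+z_1)(1+z_2)\neq0$ on $\RHPtwo$. Hence $g(s,p)\ne0$ on $\mathbb{GH}$. In the real case every step — the real version of \cite{BW} given by Theorem~\ref{detrepD2}, conjugation by the real matrix $\Phi$, and the Schur complement — stays over $\mathbb R$, so $c$, $A_1$, $A_2$ and $K$ are real.
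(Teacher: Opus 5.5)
Your proof is correct. It differs from the paper's in which determinantal representation it imports.

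\textbf{The paper's route.} The paper stays on the symmetrized level throughout. It writes the Cayley transform directly in symmetrized coordinates,
\[
\sigma=\frac{2p-2}{s+p+1},\qquad \pi=\frac{p-s+1}{s+p+1}.
\]
It then observes that $f(\sigma,\pi)=(1-\sigma+\pi)^n g(s(\sigma,\pi),p(\sigma,\pi))$ has no roots in $\mathbb G$. Next it applies the $\mathbb G$-representation \cite[Theorem 3.3]{BW} (Theorem~\ref{detrepG} in the real case), which already has the form $\det(I-\sigma A_1+\pi K)$. Substituting back and multiplying by $(s+p+1)^m$ finishes the argument.

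\textbf{Your route.} You lift to the symmetric polynomial on $\RHPtwo$ and use the symmetric bidisc representation \cite[Theorem 2.2]{BW}. You then prove by a Schur complement that $\det(I-AW)=\det\bigl(I-(w_1+w_2)A_1+w_1w_2K\bigr)$. That identity is exactly the bridge between the two theorems of \cite{BW}; compare the computation with $\hat s,\hat d$ in the proof of Theorem~\ref{detrepD2}. So in effect you re-derive, in the disc variables, what the paper uses ready-made.

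\textbf{Comparison.} The paper's route is shorter. Yours parallels the proof of Theorem~\ref{phasdetrep}. It also yields the converse implication explicitly, via $\|AW\|<1$ on $\DD^2$ and $s+p+1=(1+z_1)(1+z_2)\neq 0$; the paper's written proof treats only the forward direction.

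\textbf{One small point.} You need $l=N-n\ge 0$, so note that $N\ge n$ can always be arranged by padding $A_1,A_2$ with zero blocks. This matters when $(1+z_1)(1+z_2)$ divides $q$, which lowers the degree of $\tilde q$. For example, for $g=1+s+p$ your $\tilde q$ is the constant $4$.
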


\begin{proof}
The last statement of the theorem is proven by observing that if the polynomial $g(s,p)$ has real coefficients, all the steps below can be done over ${\mathbb R}$.

    The Cayley transform $w(z) = \frac{z-1}{z+1}$ and its inverse $z(w) = \frac{1+w}{1-w}$ provide a conformal bijection between the open right half-plane $\RHP$ and the open unit disk $\mathbb{D}$. 
    Let $(s, p)$ be the symmetrized coordinates on $\mathbb{GH}$ (where $s=z+\zeta$, $p=z\zeta$) and let $(\sigma, \pi)$ be the symmetrized coordinates on $\mathbb{G}$ (where $\sigma=w(z)+w(\zeta)$, $\pi=w(z)w(\zeta)$). By substituting the inverse transform $z(w)$ into $s$ and $p$, we express the bihalfplane variables in terms of the bidisk variables:
    $$s(\sigma, \pi) = \frac{1+w_1}{1-w_1} + \frac{1+w_2}{1-w_2} = \frac{2 - 2w_1 w_2}{1 - (w_1+w_2) + w_1 w_2} = \frac{2 - 2\pi}{1 - \sigma + \pi}$$
    $$p(\sigma, \pi) = \left( \frac{1+w_1}{1-w_1} \right) \left( \frac{1+w_2}{1-w_2} \right) = \frac{1 + (w_1+w_2) + w_1 w_2}{1 - (w_1+w_2) + w_1 w_2} = \frac{1 + \sigma + \pi}{1 - \sigma + \pi}$$
    Conversely, substituting the forward transform $w(z)$ into $\sigma$ and $\pi$ yields :
    $$\sigma(s, p) = \frac{z-1}{z+1} + \frac{\zeta-1}{\zeta+1} = \frac{2z\zeta - 2}{z\zeta + z + \zeta + 1} = \frac{2p-2}{s+p+1}$$
    $$\pi(s, p) = \left( \frac{z-1}{z+1} \right) \left( \frac{\zeta-1}{\zeta+1} \right) = \frac{z\zeta - (z+\zeta) + 1}{z\zeta + z + \zeta + 1} = \frac{p-s+1}{s+p+1}$$
    Notice from this that $1 - \sigma + \pi = \frac{4}{s+p+1}$.
    Suppose $g(s,p)$ is a polynomial of degree $n$ with no roots in $\mathbb{GH}$. Then the function
    $$f(\sigma, \pi) = (1 - \sigma + \pi)^n g(s(\sigma, \pi), p(\sigma, \pi))$$ is a polynomial without roots on the symmetrized bidisk $\mathbb{G}$.
    Applying \cite[Theorem 3.3]{BW} (or Theorem \ref{detrepG} in the real case) there exists a strict contraction $A = \begin{bmatrix} A_1 & A_2 \\ A_2 & A_1 \end{bmatrix}$, where  $A_1, A_2$ are $m \times m$ matrices with $m \geq n$ such that:
    $$f(\sigma, \pi) = c_0 \det \big( I_m - \sigma A_1 + \pi K \big),$$ for some constant $c_0$, where $K = (A_1+A_2)(A_1-A_2)$.

     Using $1 - \sigma + \pi = \frac{4}{s+p+1}$ we obtain:
    $$\left( \frac{4}{s+p+1} \right)^n g(s,p) = c_0 \det \left( I_m - \left( \frac{2p-2}{s+p+1} \right) A_1 + \left( \frac{p-s+1}{s+p+1} \right) K \right)$$

    Multiplying both sides by $(s+p+1)^m$ we get
    $$(s+p+1)^l g(s,p) = c  \det \Big( (s+p+1)I_m - (2p-2)A_1 + (p-s+1)K \Big),$$
    for some constant $c$ and $l=m-n \geq 0$. Rearranging terms inside the determinant yields
    $$(s+p+1)^l g(s,p) = c \det \Big( (I + 2A_1 + K) + s(I - K) + p(I - 2A_1 + K) \Big)$$
\end{proof}

\section*{Note in memory}
The function classes considered here belong to a broader body of work on reproducing kernel Hilbert spaces theory, to which Franciszek Hugon Szafraniec contributed: on the reproducing kernel Hilbert space and its multiplication operators \cite{Szafraniec2000}; on multipliers, subnormality and noncommutative complex analysis \cite{Szafraniec2003}; on Murphy's theory of positive definite kernels and Hilbert $C^*$-modules \cite{Szafraniec2010}; and, earlier, on multivariable holomorphic interpolation \cite{Szafraniec1986}. This consistent long-lasting line of research, as well the educational talents of  Franciszek Hugon Szafraniec, has  led to a meeting of three of his successors Radomił Baran, Piotr Pikul and Micha\l{} Wojtylak,  with Hugo Woerdeman. We record these connections here in the memory of the great professor, teacher and colleague.

\bibliographystyle{plain}

    \bibliography{bibio.bib}

\begin{thebibliography}{10}

\bibitem{AY}
Jim Agler and N.~J. Young.
\newblock Realization of functions on the symmetrized bidisc.
\newblock {\em J. Math. Anal. Appl.}, 453(1):227--240, 2017.

\bibitem{AIL}
A.~C. Antoulas, A.~C. Ionita, and S.~Lefteriu.
\newblock On two-variable rational interpolation.
\newblock {\em Linear Algebra Appl.}, 436(8):2889--2915, 2012.

\bibitem{AGP}
Athanasios~C. Antoulas, Ion~Victor Gosea, and Charles Poussot-Vassal.
\newblock On the {L}oewner framework, the {K}olmogorov superposition theorem,
  and the curse of dimensionality, 2025.

\bibitem{Arov}
D.~Z. Arov.
\newblock Passive linear steady-state dynamical systems.
\newblock {\em Sibirsk. Mat. Zh.}, 20(2):211--228, 457, 1979.

\bibitem{BGR}
Joseph~A. Ball, Israel Gohberg, and Leiba Rodman.
\newblock {\em Interpolation of rational matrix functions}, volume~45 of {\em
  Operator Theory: Advances and Applications}.
\newblock Birkh\"auser Verlag, Basel, 1990.

\bibitem{ball2015schur}
Joseph~A Ball and Dmitry~S Kaliuzhnyi-Verbovetskyi.
\newblock Schur--{A}gler and {H}erglotz--{A}gler classes of functions:
  positive-kernel decompositions and transfer-function realizations.
\newblock {\em Advances in Mathematics}, 280:121--187, 2015.

\bibitem{baran2026contractive}
Radomi{\l} Baran, Piotr Pikul, Hugo~J Woerdeman, and Micha{\l} Wojtylak.
\newblock Contractive realization theory for the annulus and other
  intersections of discs on the {R}iemann sphere.
\newblock {\em Journal of Functional Analysis}, 290:111346, 2026.

\bibitem{BW}
Radomi{\l} Baran and Hugo~J. Woerdeman.
\newblock Symmetric {S}chur-class functions on the bidisk and {S}chur-class
  functions on the symmetrized bidisk.
\newblock {\em Integral Equations Operator Theory}, 2026.
\newblock To appear.

\bibitem{BS}
Tirthankar Bhattacharyya and Haripada Sau.
\newblock Holomorphic functions on the symmetrized bidisk--{R}ealization,
  interpolation and extension.
\newblock {\em J. Funct. Anal.}, 274(2):504--524, 2018.

\bibitem{dBR1}
Louis de~Branges and James Rovnyak.
\newblock Canonical models in quantum scattering theory.
\newblock In {\em Perturbation {T}heory and its {A}pplications in {Q}uantum
  {M}echanics ({P}roc. {A}dv. {S}em. {M}ath. {R}es. {C}enter, {U}.{S}. {A}rmy,
  {T}heoret. {C}hem. {I}nst., {U}niv. of {W}isconsin, {M}adison, {W}is.,
  1965)}, pages 295--392. Wiley, New York-London-Sydney, 1966.

\bibitem{dBR2}
Louis de~Branges and James Rovnyak.
\newblock {\em Square summable power series}.
\newblock Holt, Rinehart and Winston, New York-Toronto-London, 1966.

\bibitem{Dritschel}
Michael~A. Dritschel.
\newblock Factoring non-negative operator valued trigonometric polynomials in
  two variables.
\newblock {\em Math. Ann.}, 391(1):515--537, 2025.

\bibitem{dritschel2026}
Michael~A. Dritschel, Igor Klep, Scott McCullough, and Jurij Volčič.
\newblock Addendum to "factoring non-negative operator valued trigonometric
  polynomials in two variables", 2026.
\newblock arXiv 2608.23073.

\bibitem{GKVVW}
A.~Grinshpan, D.~S. Kaliuzhnyi-Verbovetskyi, V.~Vinnikov, and H.~J. Woerdeman.
\newblock Stable and real zero polynomials in two variables.
\newblock {\em Multidimensional Systems and Signal Processing}, 27(1):1--26,
  2016.

\bibitem{grinshpan2016}
Anatolii Grinshpan, Dmitry~S. Kaliuzhnyi-Verbovetskyi, Victor Vinnikov, and
  Hugo~J. Woerdeman.
\newblock Matrix-valued {H}ermitian {P}ositivstellensatz, lurking contractions,
  and contractive determinantal representations of stable polynomials.
\newblock In {\em Operator theory, function spaces, and applications}, volume
  255 of {\em Oper. Theory Adv. Appl.}, pages 123--136. Birkh\"auser/Springer,
  Cham, 2016.

\bibitem{GKVW}
G.~Grinshpan, D.~S. Kaliuzhnyi-Verbovetskyi, V.~Vinnikov, and H.~J. Woerdeman.
\newblock Stable and real-zero polynomials in two variables.
\newblock {\em Multidimensional Systems and Signal Processing}, 27:1--26, 2016.

\bibitem{Gut1946}
L.~Guttman.
\newblock Enlargement methods for computing the inverse matrix.
\newblock {\em Annals of Mathematical Statistics}, 17(3):336--343, 1946.

\bibitem{HW}
Y.~Hachez and H.~J. Woerdeman.
\newblock The {F}ischer-{F}robenius transformation and outer factorization.
\newblock In {\em Operator Theory, Structured Matrices, and Dilations},
  volume~7 of {\em Theta Series in Advanced Mathematics}, pages 181--203.
  Theta, Bucharest, 2007.

\bibitem{knese2019}
G.~Knese.
\newblock Global bounds on stable polynomials.
\newblock {\em Complex Analysis and Operator Theory}, 13:1895--1915, 2019.

\bibitem{knese2021}
G.~Knese.
\newblock Kummert’s approach to realization on the bidisk.
\newblock {\em Indiana University Mathematics Journal}, 70(6):2369--2403, 2021.

\bibitem{Kummert}
Anton Kummert.
\newblock Synthesis of two-dimensional lossless {$m$}-ports with prescribed
  scattering matrix.
\newblock {\em Circuits Systems Signal Process.}, 8(1):97--119, 1989.

\bibitem{LT}
P.~Lancaster and M.~Tismenetsky.
\newblock {\em The Theory of Matrices}.
\newblock Computer Science and Applied Mathematics. Academic Press, Orlando,
  FL, 2 edition, 1985.

\bibitem{Szafraniec1986}
Franciszek~Hugon Szafraniec.
\newblock On bounded holomorphic interpolation in several variables.
\newblock {\em Monatsh. Math.}, 101(1):59--66, 1986.

\bibitem{Szafraniec2000}
Franciszek~Hugon Szafraniec.
\newblock The reproducing kernel {H}ilbert space and its multiplication
  operators.
\newblock In {\em Complex analysis and related topics ({C}uernavaca, 1996)},
  volume 114 of {\em Oper. Theory Adv. Appl.}, pages 253--263. Birkh\"auser,
  Basel, 2000.

\bibitem{Szafraniec2003}
Franciszek~Hugon Szafraniec.
\newblock Multipliers in the reproducing kernel {H}ilbert space, subnormality
  and noncommutative complex analysis.
\newblock In {\em Reproducing kernel spaces and applications}, volume 143 of
  {\em Oper. Theory Adv. Appl.}, pages 313--331. Birkh\"auser, Basel, 2003.

\bibitem{Szafraniec2010}
Franciszek~Hugon Szafraniec.
\newblock Murphy's positive definite kernels and {H}ilbert {$C^*$}-modules
  reorganized.
\newblock In {\em Noncommutative harmonic analysis with applications to
  probability {II}}, volume~89 of {\em Banach Center Publ.}, pages 275--295.
  Polish Acad. Sci. Inst. Math., Warsaw, 2010.

\bibitem{TGN}
P.~Triverio, S.~Grivet-Talocia, and M.~S. Nakhla.
\newblock A parametrized macromodeling strategy with uniform stability test.
\newblock {\em IEEE Transactions on Advanced Packaging}, 32(1):205--215, 2009.

\end{thebibliography}

\section*{Appendix}\renewcommand{\thesection}{A}

In this appendix we derive some real versions of results that are known over the complexes. 

\subsection*{Real Version of Arov's Contractive Realization}\label{Arov}
In this subsection we will prove the following real version of a
classical result due to Arov \cite{Arov}. 

\begin{thm}
Let $F(z) \in \mathbb{R}^{k\times l}(z)$ be a real rational matrix function that takes on contractive values for $z\in {\mathbb D}=\{ z \in{\mathbb C} : |z|<1\}$. Then $F(z)$ has a real contractive finite dimensional realization; that is, there exists a real contractive block matrix ${\scriptscriptstyle \begin{bmatrix} A & B \\ C & D \end{bmatrix}}\in{\mathbb R}^{(d{+}k)\times(d{+}l)}$ so that 
\begin{equation}\label{real} F(z) = D +zC (I -zA)^{-1} B   ,\quad z\in {\mathbb D}. \end{equation} This realization can be chosen to be minimal, i.e., $$
\bigcap_{j=0}^\infty \ker CA^{d-1} = \{0\}, \quad \bigvee_{j=0}^{d-1} \operatorname{ Ran} A^jB = {\mathbb R}^d. $$
\end{thm}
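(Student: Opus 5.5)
The plan is to take the complex Arov realization and make it real by a conjugation-symmetric doubling followed by a real change of basis. The one-variable complex result (Arov) gives a contractive complex realization $F(z)=D_0+zC_0(I-zA_0)^{-1}B_0$ with $T_0=\matr{A_0&B_0\\C_0&D_0}$ contractive. Since $F$ has real coefficients, $F(z)=\overline{F(\bar z)}$ for $z\in\DD$. Hence $F$ is also realized by the conjugate data $\overline{T_0}$, and averaging gives
\[
F(z)=\tfrac12\big(F(z)+\overline{F(\bar z)}\big)=\operatorname{Re}D_0+z\,\tfrac{1}{\sqrt2}\matr{C_0&\overline{C_0}}\Big(I-z\matr{A_0&\\&\overline{A_0}}\Big)^{-1}\tfrac1{\sqrt2}\matr{B_0\\ \overline{B_0}},
\]
where $\operatorname{Re}D_0=\tfrac12(D_0+\overline{D_0})$ is real.

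Next I would conjugate the state space by the unitary $W=\tfrac1{\sqrt2}\matr{I&I\\ -iI& iI}$, or its inverse, which maps the pairs $(x,\bar x)$ to $(\operatorname{Re}x,\operatorname{Im}x)$-type coordinates. Under this change, $\operatorname{diag}(A_0,\overline{A_0})$ becomes the real matrix $\matr{\operatorname{Re}A_0&-\operatorname{Im}A_0\\ \operatorname{Im}A_0&\operatorname{Re}A_0}$. Likewise $\matr{C_0&\overline{C_0}}W^*$ and $W\matr{B_0\\ \overline{B_0}}$ become real multiples of $\matr{\operatorname{Re}C_0&\operatorname{Im}C_0}$ (up to sign) and $\matr{\operatorname{Re}B_0\\ \operatorname{Im}B_0}$. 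This produces real data $A,B,C,D$, and the transfer function is unchanged because the similarity is unitary.

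The main obstacle is contractivity, because the averaged system matrix is not obviously contractive: the $D$-block is $\operatorname{Re}D_0$, and $B,C$ carry factors $1/\sqrt2$. I would handle it by writing the new system matrix as the compression $\frac12\big(J^*T_0J+\overline{J^*T_0J}\big)$-type average, or more cleanly as $V^*(T_0\oplus\overline{T_0})V$ for an isometry $V=\matr{I&0\\0&I}\oplus\frac1{\sqrt2}\matr{I\\I}$ on the input side and its analogue on the output side. Concretely, the new system matrix equals $\Phi_{\rm out}^*(T_0\oplus\overline{T_0})\Phi_{\rm in}$, where $\Phi_{\rm in}=I_{2d}\oplus\tfrac1{\sqrt2}\matr{I\\I}$ and $\Phi_{\rm out}=I_{2d}\oplus\tfrac1{\sqrt2}\matr{I\\I}$ are isometries after reordering blocks. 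A product of a contraction with isometries (and a co-isometry) is a contraction, so contractivity follows. I would check that the $(2,2)$ block indeed gives $\tfrac12(D_0+\overline{D_0})$ and the off-diagonal blocks give the $\tfrac1{\sqrt2}$ factors; the real change of basis is applied afterwards and preserves norms.

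Finally, for minimality I would restrict to the minimal part by a real Kalman decomposition. The controllable subspace $\bigvee\operatorname{Ran}A^jB$ and the unobservable subspace $\bigcap\ker CA^j$ of a real system are real subspaces. Compressing to (controllable) $\ominus$ (unobservable $\cap$ controllable) via real orthonormal bases preserves the transfer function. It also preserves contractivity, since we compress by isometries: the reduced system matrix is $\matr{Q^*&\\&I}T\matr{Q&\\&I}$ with $Q$ an isometry, using invariance of the controllable subspace under $A$ and of the unobservable subspace under $A$. One verification needed here is that the compression of $A$ yields exactly the reduced $A$ in the transfer function; this is the standard triangular form argument, done entirely over $\mathbb R$.
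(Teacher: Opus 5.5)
Your proof is correct, but it makes a different object real than the paper does.

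\textbf{The paper's route.} The paper starts from a \emph{minimal real} realization and uses Arov's theorem in its similarity form: a complex invertible $S$ makes that minimal realization contractive. It encodes this as $\operatorname{diag}(T,I)-\matr{A&B\\C&D}\operatorname{diag}(T,I)\matr{A&B\\C&D}^*\ge 0$ with $T=SS^*$. It then averages this \emph{storage matrix}: because $A,B,C,D$ are real, $\hat T=\frac12(T+\overline T)$ satisfies the same inequality. A real factorization $\hat T=\hat S\hat S^*$ then gives a real similarity to a contraction. The state dimension never changes, so minimality is inherited automatically.

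\textbf{Your route.} You instead make the \emph{state space} real. After doubling with the conjugate system and conjugating by $W$, your real system matrix is $\matr{\operatorname{Re}A_0&-\operatorname{Im}A_0&\operatorname{Re}B_0\\ \operatorname{Im}A_0&\operatorname{Re}A_0&\operatorname{Im}B_0\\ \operatorname{Re}C_0&-\operatorname{Im}C_0&\operatorname{Re}D_0}$. This is a submatrix of the standard real representation of $T_0$, so contractivity is even more immediate than through $\Phi_{\rm in},\Phi_{\rm out}$. This is the same device the paper uses in Steps~1 and~3 of the proof of Theorem~\ref{thm:4vreal}.

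The cost is that the state dimension doubles and minimality is lost. Your repair is sound:
\begin{itemize}
\item $\mathcal C=\bigvee_j\operatorname{Ran}A^jB$ and $\mathcal C\cap\mathcal N$ are real $A$-invariant subspaces, with $\operatorname{Ran}B\subseteq\mathcal C$ and $\mathcal C\cap\mathcal N\subseteq\ker C$.
\item Let $Q$ be a real isometry onto $\mathcal C\ominus(\mathcal C\cap\mathcal N)$. The block-triangular form shows that $(Q^*AQ,Q^*B,CQ,D)$ has the same transfer function and is controllable and observable.
\item This reduced system matrix is a compression of a contraction, hence contractive.
\end{itemize}

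\textbf{What each approach buys.} Your approach needs only the existence of \emph{some} finite-dimensional complex contractive realization, plus elementary compressions. The paper's approach gives the stronger structural fact that every minimal real realization becomes contractive after a \emph{real} similarity, with no reduction step. Incidentally, $\hat T>0$ is immediate there, since it is the average of the positive definite matrices $T$ and $\overline T$. The paper's Stein-inertia step is therefore not actually needed.
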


\begin{proof} Let $F(z) = D+zC(I-zA)^{-1}B$ be a minimal realization with real matrices $A,B,C,D$. Note that, by minimality, all the eigenvalues of $A$ lie inside ${\mathbb D}$. By Arov's result we may find a, possibly complex $S$, so that \begin{equation}\label{SABCD}
\begin{pmatrix}
S^{-1}AS & S^{-1}B \\
CS & D
\end{pmatrix}
\end{equation} is a contraction. Letting $T=SS^*$, we obtain that \begin{align}
\begin{bmatrix} T & \\ & I \end{bmatrix} - \begin{bmatrix}A&B\\C&D\end{bmatrix}  \begin{bmatrix}T & \\ & I \end{bmatrix} \begin{bmatrix}A^*&C^*\\B^*&D^*\end{bmatrix} \ge 0 .  \label{Tstatement}
\end{align} 
Using that $A,B,C,D$ are real, it is easy to see that the real matrix $\hat{T}:=\frac12(T+\overline{T})$ also works. We thus obtain  $\hat{T} - A\hat{T}A^* \ge BB^*  $. As $\vee_{j=0}^{d-1}\ {\rm Ran} A^jB = {\mathbb R}^d$, we get that 
$$\hat{T} - A^d\hat{T}A^{d*} = \sum_{j=0}^{d-1} A^j(\hat{T} - A\hat{T}A^{*})A^{j*} \ge \sum_{j=0}^{d-1} A^jBB^*A^{j*} >0.$$
Using now the Stein inequality result (see, e.g., \cite[Theorem 2 in Section 13.2]{LT}) that the number of positive eigenvalues of $\hat{T}$ equals the number of eigenvalues of $A^d$ inside ${\mathbb D}$, we obtain that $\hat{T}>0$. 
Factor now $\hat T = \hat{S}\hat{S}^*$, with $\hat{S}$ real, and ${\tiny \begin{pmatrix}
\hat S^{-1}A\hat S & \hat S^{-1}B \\
C\hat S & D
\end{pmatrix}}$ gives us the desired real contraction.

\subsection*{Real versions of realization Theorems \ref{thm:kneseDD}, \ref{thm:contrRHP}, \ref{thm:posReRHP},  and \ref{thm:DDposRe}}

\begin{thm}\label{thm:4vreal}
Let $F(z_1,z_2)$ be a rational function. The statements of Theorems \ref{thm:kneseDD}, \ref{thm:contrRHP}, \ref{thm:posReRHP},  and \ref{thm:DDposRe}
hold when appending simultaneously:
\begin{enumerate}[]
\item $F$ being real rational matrix function  in statement {\rm (i);}
\item $A,B,C,D,P_1,P_2$ being real matrices in statement {\rm (ii)}.
\end{enumerate}
\end{thm}

\begin{proof}
It is obvious that if the matrices $A,B,C,D,P_1,P_2$ are real in (ii) then $F(\overline{z_1},\overline{z_2})=\overline{F(z_2,z_1)}$, the only problem in the proof is the converse, which has to be shown separately for each result.  For a complex matrix $X$ we will use in the proof the notation 
\begin{equation}\label{eq:notations2}
\bar X=[\bar x_{ij}]_{ij},\quad  
\Re X=\frac{X+ \bar X}2,\quad \Im X=\frac{X-\bar X}{2i},\quad X_{\mathbb R}={\small \matr{  \Re X & \Im X\\ -\Im X & \Re X }}.
\end{equation}

{\em Step 1. Real realization formula for  Theorems~\ref{thm:contrRHP} and \ref{thm:posReRHP}}. 
These two results  share the same realization
\begin{equation}\label{eq:step1}
F(z_1,z_2)= D+ C(z_1P_1+z_2P_2+A)^{-1}B.
\end{equation}
We show first that the matrices $A,B,C,D$, $P_1,P_2$ can be chosen as real, in subsequent steps we will complete the system matrix conditions. Take a complex realization \eqref{eq:step1} with complex matrices and assume that the additional condition $F(\overline{z_1},\overline{z_2})=\overline{F(z_2,z_1)}$ holds. 
 As $F(t,t)$ is real, $ D$ is real as well.
 For fixed $z_1,z_2\in\mathbb R$ we have 
\begin{equation*}
\begin{bmatrix} \Re F(z_1,z_2) & \Im F(z_1,z_2) \\ -\Im F(z_1,z_2) & \Re F(z_1,z_2) \end{bmatrix} = \begin{bmatrix}  D & 0 \\ 0 & D \end{bmatrix}
\end{equation*}  
\begin{equation*}
+ \begin{bmatrix} \Re C & \Im C   \\ -\Im C & \Re C \end{bmatrix}
\left ( z_1   \begin{bmatrix} \Re P_1 & \Im P_1   \\ -\Im P_1 & \Re P_1 \end{bmatrix} 
+ z_2\begin{bmatrix} \Re P_2 & \Im P_2   \\ -\Im P_2 & \Re P_2\end{bmatrix} 
+\begin{bmatrix} \Re A & \Im A   \\ -\Im A & \Re A \end{bmatrix} 
\right)^{-1}
\begin{bmatrix} \Re B & \Im B   \\ -\Im B & \Re B \end{bmatrix}.
\end{equation*}
As $\Im F(z_1,z_2)=0$ we have 
\begin{equation*}
F(z_1,z_2)=
\Re F(z_1,z_2) =  \begin{bmatrix} I & 0  \end{bmatrix}  \begin{bmatrix} \Re F(z_1,z_2) & \Im F(z_1,z_2) \\ -\Im F(z_1,z_2) & \Re F(z_1,z_2) \end{bmatrix} \begin{bmatrix} I \\ 0  \end{bmatrix} +D
\end{equation*}
\begin{equation*}
=\begin{bmatrix} \Re C & \Im C   \end{bmatrix}\left ( z_1   \begin{bmatrix} \Re P_1 & \Im P_1   \\ -\Im P_1 & \Re P_1 \end{bmatrix} 
+ z_2{ \begin{bmatrix} \Re P_2 & \Im P_2   \\ -\Im P_2 & \Re P_2\end{bmatrix} }
+\begin{bmatrix} \Re A & \Im A   \\ -\Im A & \Re A \end{bmatrix} 
\right)^{-1}
\begin{bmatrix} \Re B    \\ -\Im B  \end{bmatrix} +D.
\end{equation*}
As the set $(\mathbb R,\mathbb R)$ is a set of uniqueness in $\mathbb C^2$, the above formula holds for complex $z_1,z_2$ as well. 
Hence, we obtained  a real representation of $F(z_1,z_2)$ with 
\begin{align}
\tilde A&= \matr{ \Re A & \Im A   \\ -\Im A & \Re A } =A_{\mathbb R},\quad 
\tilde B= \matr{ \Re B    \\ -\Im B  },\quad
\tilde C= \matr{ \Re C & \Im C },\quad \tilde D=D, \label{eq:tildas1}\\
\tilde P_j&
=\matr{\Re P_j & \Im P_j   \\ -\Im P_j & \Re P_j }
=(P_j)_{\mathbb R}, \quad j=1,2.\label{eq:tildas2}
\end{align}
It is clear that $\tilde P_1,\tilde P_2$ are positive semidefinite matrices, with their sum equal to identity. 
\medskip

\emph{Step 2. Real realization formula for Theorems~\ref{thm:kneseDD}, and \ref{thm:DDposRe}}.
For the realization
\[
F(z_1,z_2)= D+ C(P_1z_1+P_2z_2)\big(I - A(P_1z_1+P_2z_2) \big)^{-1}B
\]
 the proof goes in a similar way, the matrices $\tilde A$, $\tilde B$, $\tilde C$, $\tilde D$, and $\tilde P_j$ ($j=1,2$) are defined again according to \eqref{eq:tildas1}, and \eqref{eq:tildas2}.
Additionally, note that in the case of Theorem~\ref{thm:kneseDD} the matrices $\tilde P_j$  ($j=1,2$) remain complementary orthogonal projections.\medskip

\emph{Step 3. System matrix condition in Theorem~\ref{thm:kneseDD}}
Let $T={\scriptscriptstyle \matr{A & B\\ C & D}}$ be the system matrix for the complex realization \eqref{eq:step1}. We have
$$
\norm{T_{\mathbb R}}=\norm T\leq 1,
$$
where the first norm is real and the second complex. Note that the system matrix $\tilde T$ for the real realization \eqref{eq:tildas1}, \eqref{eq:tildas2} is a submatrix of $T_{\mathbb R}$, hence it is contractive as well.\medskip

\emph{Step 4. System matrix condition in Theorem~\ref{thm:posReRHP}.}
Note that the complex realization \eqref{eq:step1} satisfies the KYP inequality \eqref{eq:kyp0}, which can be rewritten as 
$$
S+S^*\geq0 ,\quad S={\small \matr{A & -B\\ C & D}}.
$$
Therefore, we have as well that
$$
0\leq S_{\mathbb R} + (S^*)_{\mathbb R} = S_{\mathbb R} + (S_{\mathbb R})^\top.
$$
Observe now that for $\tilde S={\scriptscriptstyle\matr{\tilde A & -\tilde B\\ \tilde C & \tilde D}}$ the matrix $\tilde S+\tilde S^\top$ is a submatrix of $S_{\mathbb R} + (S_{\mathbb R})^\top$ and the statement holds.
\medskip

\emph{Step 5. System matrix condition in Theorems~\ref{thm:contrRHP}, and \ref{thm:DDposRe}}
We prove only the former one, as the latter is similar.
Let $M$ denote the matrix in \eqref{eq:c1}.
We have that after a suitable permutation of rows and collumns the matrix $M_{\mathbb R}$ has the form
$$
\tilde M_{\mathbb R}=\matr{ A_{\mathbb R}+(A_{\mathbb R})^\top  - (C_{\mathbb R})^\top C_{\mathbb R} & (C_{\mathbb R})^\top D_{\mathbb R} + B_{\mathbb R}   \\ (D_{\mathbb R})^\top C_{\mathbb R}+    (B_{\mathbb R})^\top & I- (D_{\mathbb R})^\top D_{\mathbb R}) }.
$$
Observe that 
\begin{align*}
     (C_{\mathbb R})^\top C_{\mathbb R} &= \matr{ \Re C^\top \Re C +\Im C^\top \Im C&   \Re C^\top \Im C -\Im C^\top \Re C \\ \Im C^\top \Re C- \Re C^\top \Im C  &  \Re C^\top \Re C +\Im C^\top \Im C   } \\
     &= 
 \matr{\Re C^\top \\ \Im C^\top} \matr{\Re C & \Im C} +  \matr{\Im C^\top \\ -\Re C^\top} \matr{\Im C & -\Re C}\\
 &\geq \tilde C^\top \tilde C.
\end{align*}
Hence,
$$
0\leq\tilde M_{\mathbb R}\leq 
\matr{\tilde A+\tilde A^\top -\tilde C^\top \tilde C  & (C_{\mathbb R})^\top D_{\mathbb R} + B_{\mathbb R}   \\ (D_{\mathbb R})^\top C_{\mathbb R}+    (B_{\mathbb R})^\top & I- (D_{\mathbb R})^\top D_{\mathbb R}) }=:M'.
$$
It is now enough to see that the matrix in \eqref{eq:c1} for matrices $\tilde A,\tilde B,\tilde C,\tilde D$ is a submatrix of $M'$.
\end{proof}

\subsection*{Real version of Fej\'er-Riesz factorization}

\begin{thm} Suppose that $Q(z) =\sum_{k=-n}^n Q_j z^j$, with $Q_j=Q_{-j}^* \in {\mathbb R}^{m\times m}$, $j=0,\ldots , n$, and $Q(z) > 0$, $z\in{\mathbb T}$. Then there exists $P(z) =\sum_{j=0}^n P_j z^j$, with $P_j \in {\mathbb R}^{m\times m}$, $j=0,\ldots , n$, so that $Q(z)=P(z)^*P(z)$, $z\in {\mathbb T}$, and $\det P(z) \neq 0$, $z \in\overline{\mathbb D}$ (thus, $P(z)$ is outer). \end{thm}

\begin{proof} Following \cite[Proposition 4.6]{HW} one can find an outer factorization of $Q(z)$ by considering the convex compact set
$$ \{ X=(X_{ij})_{i,j=0}^n : X\ge 0, \sum_{j=0}^{n-k} X_{k+j,k}=Q_k, k=0,\ldots ,n \}, $$
and choosing the unique $X_{\rm opt}$ in this set that maximizes ${\rm trace} X_{nn}$. Then ${\rm rank} X_{\rm opt}= {\rm rank} Q_0 \le m$, and thus we may factor 
$$ X_{\rm opt} = \begin{pmatrix} P_n^* \cr \vdots \cr P_0^* \end{pmatrix} \begin{pmatrix} P_n & \dots & P_0 \end{pmatrix} ,
$$ with $P_j$ of size ${m\times m}$, $j=0,\ldots , n$. This gives the desired outer factorization. When the initial data is real, this process can be fully performed within the set of real matrices, yielding the real version of Fej\'er-Riesz factorization. 
\end{proof}

\subsection*{Real determinantal representation}

\begin{thm}\label{main}
Let $p(z_1, z_2)$, with $p(0, 0)=1$, be a non-constant 
bivariate polynomial with real coefficients. Then $p$ admits a representation 
\begin{equation}\label{eq:repr} p(z_1,z_2)=\det (I_{|n|} - K Z_n ),
\end{equation}
with $n=(n_1,n_2)=\deg p$ and $K\in {\mathbb R}^{|n|\times |n|}$ a contraction, where $|n|=n_1+n_2$ and
$Z_n=z_1I_{n_1}\oplus z_2I_{n_2}$. 
\end{thm}

The proof consist of observing that the key steps in the determinantal representation result \cite[Theorem 1.1]{GKVVW} can all be performed over the reals when the initial two variable polynomial $p(z_1,z_2)$ has real coefficients. These key steps are:

\begin{itemize}
\item First argue that it suffices to prove the result for a dense set of polynomials, which allows one to assume  certain generic conditions on the polynomial $p(z_1, z_2) $. The argument uses the compactness of the set of $n\times n$ contractions. 
\item Expand $p$ in the powers of $z_2$, $p(z_1, z_2) = p_0( z_1 ) + \cdots + p_{n_2}( z_1) z_2^{n_2}$, where $p_j(z_1) \in {\mathbb R}[z_1]$, $j=0,\ldots , n_2$,
and introduce the companion matrix
\begin{equation}\label{Cjdef2}   C (z_1) = \begin{bmatrix} \frac{p_1(z_1)}{p_0(z_1)} & 1 & & 0 \cr \vdots &
& \ddots &\cr -\frac{p_{n_2-1}(z_1)}{p_0(z_1)} & 0 & & 1\cr
- \frac{p_{n_2}(z_1)}{p_0(z_1)} & 0 & \cdots & 0 \end{bmatrix},  \end{equation} which has real coefficients.
\item Introduce the triangular Toeplitz matrices
\begin{equation*}   A(z_1) =
\begin{bmatrix} p_0 (z_1)& & \cr \vdots & \ddots & \cr p_{n_2-1} (z_1) &
\cdots & p_0(z_1) \end{bmatrix},\quad   B(z_1) = \begin{bmatrix} p_{n_2} (z_1) &
\cdots & p_{1} (z_1) \cr & \ddots & \vdots \cr & & p_{n_2} (z_1)
\end{bmatrix},  \end{equation*} and form the Bezoutian $Q(z_1) :=  
A(z_1)  {A}(1/\overline{z_1})^* -   B(1/\overline{z_1})^*  
B(z_1)$. Now $Q$ is a trigonometric polynomial with real matrix coefficients.

\item Observe that $Q(z_1)$ is positive definite for
every $z_1\in\mathbb T$ and finds its real Fej\'er-Riesz outer factorization, i.e., a $n_2\times n_2$ matrix-valued polynomial $P(z_1) = P_0 + \cdots
+ P_{n_1} z_1^{n_1}$ with real matrix coefficients such that the factorization $Q(z_1) =
P(z_1)^*P(z_1)$, $z_1 \in {\mathbb T}$, holds and $P(z_1)$ is
invertible for $z_1 \in\overline{\mathbb D}$.

\item Introduce the rational matrix-valued function
$$ M(z_1) := P(z_1)^{-1}  C(z_1) P(z_1),$$ which is analytic on
$\overline{\mathbb D}$. Then, $M$ is contractive and has real coefficients. Now apply the real version of Arov's theorem, yielding a contraction ${\tiny \begin{pmatrix} A & B \cr C & D \end{pmatrix}}$.

\item Finally, observe that
$$p(z_1, z_2)= \det\left(I_{|n|}-\begin{bmatrix} A & B\\ C & D\end{bmatrix}\begin{bmatrix}
z_1I_{n_1}& 0\\ 0 & z_2I_{n_2}\end{bmatrix}\right),$$ and we are done.

\end{itemize}
\end{proof}

As a consequence of Theorem \ref{main} we can now also state real versions of \cite[Theorem 2.2 and Theorem 3.3]{BW} and \cite[Theorem 3.2]{knese2019}

\begin{thm}\label{detrepG}
     Let $g(s,p)$ be a polynomial with real coefficients. Then $g(s,p)$ is without roots in ${\mathbb G}$ if and only if there exists a real contraction \begin{equation}\label{contr3} \begin{pmatrix} A_1 & A_2\cr A_2 & A_1\end{pmatrix} \end{equation} such that 
    \begin{equation}\label{gh} g(s,p)= g(0,0) \det (I -sA_1 + p(A_1+A_2)(A_1-A_2)).\end{equation}
    In addition, $g(s,p)$ is without roots in $\overline{\mathbb G}$ if and only if there exists a strict real contraction \eqref{contr3} so that 
    \eqref{gh} holds.
\end{thm}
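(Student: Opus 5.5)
The plan is to deduce Theorem~\ref{detrepG} from its complex counterpart \cite[Theorem 3.3]{BW}, rerunning that argument over $\mathbb R$. Its only non-algebraic input is the symmetric bidisk representation \cite[Theorem 2.2]{BW}, and the real analogue of that will come from Theorem~\ref{main}.

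\emph{Sufficiency.} Assume \eqref{gh} holds with \eqref{contr3} a real contraction. Put $f(z_1,z_2)=g(z_1+z_2,z_1z_2)$. Conjugating with $\Phi=\frac1{\sqrt2}{\tiny\begin{bmatrix} I&I\\ I&-I\end{bmatrix}}$ and taking a Schur complement, as in the proof of Theorem~\ref{thm:contrGH}, gives
\[
\det\Big(I-\begin{bmatrix}A_1&A_2\\A_2&A_1\end{bmatrix}(z_1I\oplus z_2I)\Big)=\det(I-sA_1+pK),\qquad K=(A_1+A_2)(A_1-A_2).
\]
For $(z_1,z_2)\in\DD^2$ the matrix $I-A(z_1I\oplus z_2I)$ is invertible, since $A(z_1I\oplus z_2I)$ is a strict contraction. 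Hence $g$ has no zeros on $\mathbb G$. If $A$ is a strict contraction, the same holds on $\overline{\DD}^2$, and so $g$ has no zeros on $\overline{\mathbb G}$.

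\emph{Necessity.} Suppose $g$ has no zeros in $\mathbb G$. Then $f$ is a real symmetric polynomial without zeros in $\DD^2$, with $\deg f=(n,n)$ and $f(0,0)=g(0,0)$. This works over both fields, so the only thing to supply is a real version of \cite[Theorem 2.2]{BW}, i.e.\ a real contraction of the form \eqref{contr3} with $f=f(0,0)\det(I-A(z_1I\oplus z_2I))$.

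To get it, I first apply Theorem~\ref{main} to $f/f(0,0)$ and obtain a real contraction $K_0$ with $f=f(0,0)\det(I-K_0Z_n)$. Next I would check that the symmetrization in \cite{BW} is purely linear-algebraic. That step averages the representation with its swap via the flip $U={\tiny\begin{bmatrix}0&I\\I&0\end{bmatrix}}$, producing the block structure \eqref{contr3} while preserving contractivity. Any norm or compactness step it uses needs only real compactness of the contraction ball. So it preserves reality, exactly as in the proof of Theorem~\ref{thm:contrDDsym}, where the real case was obtained ``by inspection''. Then I substitute $s,p$ and use the identity displayed above.

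For the strict version, $g$ has no zeros on $\overline{\mathbb G}$. Then $g(r^{-1}s,r^{-2}p)$ has no zeros on $\mathbb G$ for some $r<1$ close to $1$. Applying the above yields a contraction $A'$, and replacing the blocks of $A'$ by $rA'_1,rA'_2$ gives a strict real contraction for $g$. The step I expect to be the main obstacle is the claim that the symmetrization in \cite{BW} introduces no non-real quantities, such as a complex unitary or a complex spectral factor. If it does, I would fall back on the realification trick of Theorem~\ref{thm:4vreal}: pass to $X_{\mathbb R}$, which keeps contractivity and the block-symmetric form, and then compress to a real sub-representation.
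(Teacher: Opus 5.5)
Your sufficiency direction and the rescaling argument for the strict case are fine. The identity $\det(I-A(z_1I\oplus z_2I))=\det(I-sA_1+pK)$ is exactly the one verified in the proof of Theorem~\ref{detrepD2}.

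The gap is in necessity. There everything rests on a real block-symmetric representation $f=f(0,0)\det(I-A(z_1I\oplus z_2I))$, and the mechanism you describe cannot produce one.

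\begin{itemize}
\item \emph{Averaging fails.} Averaging $K_0$ with $UK_0U$ gives the right block pattern and keeps contractivity. But $K\mapsto\det(I-KZ_n)$ is not affine, so the average represents a different polynomial. For $f=1+\frac14 z_1z_2$ one can take $K_0=\begin{pmatrix}0&1/2\\-1/2&0\end{pmatrix}$. Then $UK_0U=-K_0$, the average is $0$, and $\det(I-0)=1\neq f$.
\item \emph{Direct sums fail.} The direct-sum symmetrization used for transfer functions (as in the proof of Theorem~\ref{thm:ScontRHP}) works only because $F\mapsto\frac12(F(z_1,z_2)+F(z_2,z_1))$ is linear. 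For determinants a direct sum multiplies, so you would get a representation of $f^2$.
\item \emph{Realification fails.} Your fallback breaks for the same reason: the realified determinant is $f\overline{f}$, i.e.\ $f^2$ for real $f$. Passing to a principal submatrix does not recover $f$. In Step~1 of Theorem~\ref{thm:4vreal} the compression works only because one restricts $B$ and $C$ to pick out an entry of $F_{\mathbb R}$, and that has no determinantal counterpart.
\end{itemize}

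The same example shows the difficulty is not merely checking that ``no complex numbers appear''. With $1\times1$ blocks one needs $A_1=0$ and $A_2^2=-\frac14$. This is possible over $\mathbb C$ ($A_2=\frac i2$) but not over $\mathbb R$. So any real proof must enlarge the matrices, and that construction is exactly what your argument does not supply.

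The paper never symmetrizes a bidisk representation. It runs in the opposite order, deducing Theorem~\ref{detrepD2} from Theorem~\ref{detrepG}.
\begin{itemize}
\item It writes $h(z,\zeta)=g(z+\zeta,z\zeta)=q(\sigma,\delta^2)$ with $\sigma=\frac{z+\zeta}2$ and $\delta=\frac{z-\zeta}2$.
\item It applies Theorem~\ref{main} to $q$ as a polynomial in $(\sigma,\delta^2)$.
\item It then writes $A_1,A_2$ down explicitly from the blocks $K_{ij}$ of the resulting real contraction. The extra $\mp\frac12 I_m$ blocks linearize the $\frac{s^2}{4}$ term via a Schur complement.
\end{itemize}
Reality and the block structure are then automatic, and $\|A_1\pm A_2\|\le 1$ is read off directly.

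Be aware, however, that this route needs $q$ to be zero-free on $\mathbb D^2$ in the variables $(\sigma,\delta^2)$. That does not follow from $h$ being zero-free on $\mathbb D^2$, since $(z,\zeta)\mapsto(\sigma,\delta^2)$ maps $\mathbb D^2$ into but not onto $\mathbb D^2$. For $h=(1-z/c)(1-\zeta/c)$ with $c=1.8$ one gets $q=(1-\sigma/c)^2-\delta^2/c^2$, which vanishes at $(\sigma,\delta^2)=(0.9,0.81)$. So borrowing the paper's route would also require an additional argument at that point; it does not by itself close the gap in your plan.
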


\begin{proof} We repeat the proof of \cite[Theorem 3.3]{BW}, but now over the reals.

Let $g(s,p)\in{\mathbb R}[s,p]$ be without roots in ${\mathbb G}$. For ease of the presentation, let us assume that $g(0,0)=1$ (otherwise, consider $\frac{g(s,p)}{g(0,0)}$). Put $h(z,\zeta)=g(z+\zeta, z\zeta).$ Then $h(z,\zeta)=h(\zeta, z) \in{\mathbb R}[z,\zeta]$ is symmetric and without roots in the closed bidisk. Let $h(z,\zeta)$ be of degree $(n,n)$. Let $\sigma=\frac12(z+\zeta)$ and $\delta=\frac12 (z-\zeta)$. Then $h(z,\zeta)=h(\sigma+\delta,\sigma-\delta)=q(\sigma,\delta)$ for some real polynomial $q$. Since $h(z,\zeta)=h(\zeta,z)$ we get that $q(\sigma,\delta)=q(\sigma,-\delta)$. Thus $q$ is even in $\delta$, so $q$ is a polynomial in $\sigma$ and $\delta^2$. Moreover, $q$ as a polynomial in $\sigma$ and $\delta^2$ is of degree $(n,m)$, where $m=\lfloor \frac{n}{2} \rfloor$, and $q$ is without roots in ${\mathbb D}^2$. Applying Theorem \ref{main} to $q$ as a polynomial in $\sigma$ and $\delta^2$, we obtain that there exists a $(n+m)\times (n+m)$ real contraction
$$ \begin{pmatrix} K_{11} & K_{12}\cr K_{21} & K_{22}\end{pmatrix}$$ such that
$$
q(\sigma,\delta)= \det \left( I_{n+m} - 
\begin{pmatrix} K_{11} & K_{12}\cr K_{21} & K_{22}\end{pmatrix} \begin{pmatrix} \sigma I & 0\cr 0 & \delta^2 I_m\end{pmatrix} \right).
$$
Let $s=z+\zeta$ and $p=z\zeta$. Then $\sigma=\frac{s}{2}$ and $\delta^2=-p+\frac{s^2}{4}$ and since $g(s,p)=q(\sigma,\delta)$, we find that
$$
g(s,p) = \det \left( I - \begin{pmatrix} \frac12 K_{11} & -K_{12}\cr \frac12 K_{21} & -K_{22}\end{pmatrix} \begin{pmatrix} s I & 0\cr 0 & p I_m\end{pmatrix} - \frac{s^2}{4} \begin{pmatrix} 0 & K_{12}\cr 0 & K_{22}\end{pmatrix} \right).
$$ We may rewrite this, using Schur complements, as
$$ g(s,p) = \det \left( I - s \begin{pmatrix} \frac12 K_{11} & 0 & -\frac12 K_{12}\cr \frac12 K_{21} & 0 &  -\frac 12 K_{22}\cr 0 & -\frac12 I_m & 0 \end{pmatrix} +p \begin{pmatrix} 0 & K_{12} & 0\cr 0 & K_{22} & 0 \cr 0 & 0 & 0 \end{pmatrix} \right). $$
Let now $$ A_1= \begin{pmatrix} \frac12 K_{11} & 0 & -\frac12 K_{12}\cr \frac12 K_{21} & 0 &  -\frac 12 K_{22}\cr 0 & -\frac12 I_m & 0 \end{pmatrix} , A_2 = \begin{pmatrix} \frac12 K_{11} & 0 & -\frac12 K_{12}\cr \frac12 K_{21} & 0 &  -\frac 12 K_{22}\cr 0 & \frac12 I_m & 0 \end{pmatrix}.$$ Then \eqref{gh} follows. Note also that $A_1$ and $A_2$ are of size $n+2m \le 2n$. Finally, it is easy to check that $\eqref{contr3}$ is a contraction (as $\eqref{contr3}$ being a contraction is equivalent to $\| A_1 \pm A_2 \| \le 1$). 

If $g(s,p)$ is without roots in $\overline{\mathbb G}$, then there exists $R>1$ so that $g(Rs,R^2p)$ is without roots $(s,p)\in {\mathbb G}$. This yields that $g(Rs,R^2p)$ may be expressed as the righthand side of \eqref{gh}. But then $\frac{1}{R}A_1$ and $\frac{1}{R} A_2$ yields the desired strict contraction in the determinantal representation \eqref{gh} for $g(s,p)$. 
\end{proof}

\begin{thm}\label{detrepD2}
A scalar valued symmetric polynomial $g(z,\zeta) \in {\mathbb R}[z,\zeta]$ of degree $(n,n)$ has no roots in ${\mathbb D}^2$ if and only if $g$ has a determinantal representation
\begin{equation}\label{dr2} g(z,\zeta ) = g(0,0) \det \left( I - \begin{pmatrix} A_1 & A_2\cr A_2 & A_1\end{pmatrix} \begin{pmatrix} zI & 0\cr 0 & \zeta I \end{pmatrix} \right)  \end{equation}
with 
a real contraction 
\begin{equation} \label{contrDetBidiskReal}
    \begin{pmatrix} A_1 & A_2\cr A_2 & A_1\end{pmatrix}.
\end{equation} In addition, $g(z,\zeta)$ has no roots in $\overline{\mathbb D}^2$ if and only if $g$ has a determinantal representation \eqref{dr2} with \eqref{contrDetBidiskReal} a real strict contraction. In both cases the matrices $A_1$ and $A_2$ can be chosen to be of size at most $2n\times 2n$.  
\end{thm}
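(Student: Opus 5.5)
The plan is to mimic the real proof of Theorem~\ref{detrepG} just given, i.e.\ repeat the argument of \cite[Theorem 2.2]{BW} over the reals, with Theorem~\ref{main} as the only nontrivial input.

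\emph{Reduction.} Write $g$ as a real polynomial in $\sigma=\frac12(z+\zeta)$ and $\delta=\frac12(z-\zeta)$: $g(z,\zeta)=q(\sigma,\delta)$. Symmetry of $g$ gives $q(\sigma,\delta)=q(\sigma,-\delta)$, so $q(\sigma,\delta)=r(\sigma,\delta^2)$ for a real polynomial $r$ of bidegree at most $(n,\lfloor n/2\rfloor)$. To transfer zero-freeness we cannot simply use $(\sigma,\delta^2)$, since that map does not send $\mathbb D^2$ onto $\mathbb D^2$. Instead I would work with $h(z,\zeta)=g(z,\zeta)$ directly and apply Theorem~\ref{main} to $g$ itself, which is legitimate because $g$ has no zeros in $\mathbb D^2$ and $g(0,0)\neq0$ (it is nonzero at the origin). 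This gives a real contraction $K$ of size $2n$ with $g=g(0,0)\det(I-KZ)$, $Z=zI_n\oplus\zeta I_n$.

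\emph{Symmetrization.} Let $J=\matr{0&I\\I&0}$. Then $JZJ=\zeta I_n\oplus zI_n$, so $g(z,\zeta)=g(\zeta,z)=g(0,0)\det(I-JKJ\,Z)$. Symmetry of the determinant alone does not make $K$ satisfy $JKJ=K$, and averaging $\frac12(K+JKJ)$ does not preserve the determinant. So I would follow the route of \cite{BW}: use the real Schur-complement construction of Theorem~\ref{detrepG}. Since $g(s',p')$ with $g(z,\zeta)=G(z+\zeta,z\zeta)$ for a real $G$, and $G$ has no zeros on $\mathbb G$ exactly when $g$ has none on $\mathbb D^2$, Theorem~\ref{detrepG} gives real $A_1,A_2$ with $\matr{A_1&A_2\\A_2&A_1}$ contractive and
\[G(s,p)=G(0,0)\det\big(I-sA_1+p(A_1+A_2)(A_1-A_2)\big).\]

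\emph{Conversion.} It remains to identify $\det(I-sA_1+pK')$, with $K'=(A_1+A_2)(A_1-A_2)$ and $s=z+\zeta$, $p=z\zeta$, with $\det\big(I-\matr{A_1&A_2\\A_2&A_1}(zI\oplus\zeta I)\big)$. Conjugating by $\Phi=\frac1{\sqrt2}\matr{I&I\\I&-I}$ (real) and taking a Schur complement, one should get
\[\det\matr{I-\tfrac{z+\zeta}2(A_1+A_2) & -\tfrac{z-\zeta}2(A_1+A_2)\\ -\tfrac{z-\zeta}2(A_1-A_2) & I-\tfrac{z+\zeta}2(A_1-A_2)},\]
and a direct computation, via block determinant formulas or the identity $\det(I-zX)(I-\zeta X)$ in the commuting case, reduces this to $\det(I-sA_1+pK')$. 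I expect this step to be the main obstacle, because the factors do not commute in general. It is also where \cite{BW} chooses the specific block form; the matrices $A_1,A_2$ in the proof of Theorem~\ref{detrepG} are built precisely for this.

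The sizes are at most $n+2m\le2n$. The strict version follows as in Theorem~\ref{detrepG} by scaling by $R>1$. The converse direction holds because a contractive representation has no zeros: for $(z,\zeta)\in\mathbb D^2$ the matrix $KZ$ is a strict contraction.
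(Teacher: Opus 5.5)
Your final route is the paper's. You write $g(z,\zeta)=G(z+\zeta,z\zeta)$ with $G$ real and zero-free on $\mathbb G$, take the real representation $G(s,p)=G(0,0)\det(I-sA_1+pK')$ from Theorem~\ref{detrepG}, and convert it into \eqref{dr2} by conjugating with $\Phi$. The detours through Theorem~\ref{main} applied to $g$ and through symmetrizing $K$ are not needed and can be deleted.

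The gap is that this conversion is never carried out, and it is the only part of the theorem that is not a citation. You write ``one should get'' and call it the main obstacle because $A_1+A_2$ and $A_1-A_2$ need not commute, so the proof stops exactly at the step that has to be proved. Also, the identity is purely algebraic and holds for arbitrary square $A_1,A_2$. Nothing about the special block form built in the proof of Theorem~\ref{detrepG} is involved.

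The non-commutativity is harmless. Put $X=A_1+A_2$, $Y=A_1-A_2$, $s=z+\zeta$, $d=z-\zeta$. In your conjugated matrix the two lower blocks, $-\tfrac d2Y$ and $I-\tfrac s2Y$, are polynomials in $Y$ and hence commute. The block formula $\det\matr{E&F\\G&H}=\det(EH-FG)$, valid when $GH=HG$, then gives
\[
\det\Big(\big(I-\tfrac s2X\big)\big(I-\tfrac s2Y\big)-\tfrac{d^2}{4}XY\Big)=\det\Big(I-sA_1+\tfrac{s^2-d^2}{4}XY\Big)=\det(I-sA_1+pK'),
\]
since $X+Y=2A_1$ and $s^2-d^2=4z\zeta=4p$. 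Only the product $XY$ appears, in precisely the order defining $K'$.

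The paper reaches the same identity slightly differently. With $\mathcal A=\matr{A_1&A_2\\A_2&A_1}$ and $Z=zI\oplus\zeta I$, for $z\zeta\neq0$ it writes $\det(I-\mathcal AZ)=z^\ell\zeta^\ell\det(Z^{-1}-\mathcal A)$. After conjugation by $\Phi$ the variable part then has scalar blocks $\tfrac12\hat sI$ and $\tfrac12\hat dI$, with $\hat s=s/p$ and $\hat d^2=(s^2-4p)/p^2$, and these commute with everything. The case $z\zeta=0$ follows by continuity. With either computation inserted, your proof matches the paper's, including the trivial converse, the strict version via $R>1$, and the size bound inherited from the proof of Theorem~\ref{detrepG}.

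Incidentally, your remark that $(\sigma,\delta^2)$ does not carry $\mathbb D^2$ onto $\mathbb D^2$ is correct. It also bears on the paper's own proof of Theorem~\ref{detrepG}, which asserts that $q$ is root-free on $\mathbb D^2$ in those variables. For $g=(1-z)(1-\zeta)$ one gets $(1-\sigma)^2-\delta^2$, which vanishes at $\sigma=\tfrac12$, $\delta^2=\tfrac14$. Both your argument and the paper's use Theorem~\ref{detrepG} as a black box, so they share that dependence.
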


\begin{proof}
We repeat the proof of \cite[Theorem 2.2]{BW}, but now over the reals.

Let $g(z,\zeta)$ be a symmetric polynomial without roots in ${\mathbb D}^2$.
Let $s=z+\zeta$ and $d=z-\zeta$. Then $g(z,\zeta)=g(\frac{s+d}2,\frac{s-d}2)=h_0(s,d)$ for some polynomial $h_0$. Since $g(z,\zeta)=g(\zeta,z)$ we get that $h_0(s,d)=h_0(s,-d)$. Thus $h$ is even in $d$, so $h$ is a polynomial in $s$ and $d^2$. As $d^2= s^2 -4p$, where $p=z\zeta$, we get that $h_0$ is in fact a polynomial $h(s,p)$ in $s$ and $p$. Thus $g(z,\zeta) = h(s,p)$, and as $g$ is without roots in ${\mathbb D}^2$ the polynomial $h(s,p)\in {\mathbb R}[s,p]$ is without roots in ${\mathbb G}$. Now, Theorem \ref{detrepG} yields the existence of real matrices $A_1$ and $A_2$ of size $\ell \le 2n$ so that \eqref{contrDetBidiskReal} is a contraction and \eqref{gh} holds. We claim that the right hand side of \eqref{dr2} equals \eqref{gh}. For convenience we assume that $h(0,0)=g(0,0)=1$.

Let $z\neq 0 \neq \zeta$ and put $\hat{s}=\frac1z+\frac{1}{\zeta}, \hat{d} = \frac1z-\frac{1}{\zeta}.$ Then $\hat{s}=\frac{s}{p}, \hat{d}^2=\frac{s^2-4p}{p^2}$. Moreover, using ${\tiny U=\begin{bmatrix} \frac{1}{\sqrt{2}} I & \frac{1}{\sqrt{2}} I \cr \frac{1}{\sqrt{2}} I & -\frac{1}{\sqrt{2}} I  \end{bmatrix}}$,
$$ \det U\left( I - \begin{pmatrix} A_1 & A_2\cr A_2 & A_1\end{pmatrix} \begin{pmatrix} zI_\ell & 0\cr 0 & \zeta I_\ell \end{pmatrix} \right)U^* = $$ $$z^\ell \zeta^\ell \det \left( \frac12\begin{pmatrix} \hat{s}I & \hat{d}I \cr \hat{d}I & \hat{s} I\end{pmatrix} - \begin{pmatrix} A_1+A_2 & 0 \cr 0 & A_1-A_2 \end{pmatrix} \right) = $$
$$ z^\ell \zeta^\ell \det \left( (\frac{\hat{s}}{2}I_\ell -(A_1+A_2)) (\frac{\hat{s}}{2} I_\ell -(A_1-A_2)) - \frac{\hat{d}^2}{4}I_\ell \right) = $$
$$ p^\ell \det(\frac{1}{p}I_l - \frac{s}{p} A_1 +(A_1+A_2)(A_1-A_2)) = g(s,p).$$ This proves that \eqref{dr2} holds (where we observe that for $z=0$ or $\zeta=0$ the equality follows by continuity).

When $g(z,\zeta)$ has no roots in $\overline{\DD}^2$ one may find an $R>1$ so that $g(Rz,R\zeta)$ has not roots for $(z,\zeta) \in {\mathbb D}^2$. One can thus represent $g(Rz,R\zeta)$ as the right-hand-side of \eqref{dr2}. But then {\tiny $\frac{1}{R} \begin{pmatrix} A_1 & A_2\cr A_2 & A_1\end{pmatrix}$} is the desired real strict contraction.
\end{proof} 

\begin{thm}\label{real-Knese}
Let $p \in \mathbb{R}[z_1, z_2]$ be a real polynomial with bidegree $(n,m)$, without zeros in the right bihalfplane $\RHPtwo$. Then $p$ admits a determinantal representation 
\begin{equation}\label{eq:KneseBHR} p(z_1,z_2)=c\det (A+z_1B_1+z_2B_2),
\end{equation}
where $A, B_1, B_2$ are real matrices of size $\deg p \times \deg p$ such that $S_A \coloneqq \frac{1}{2}(A+A^T) \geq 0$, $B_1, B_2 \geq 0$ and $B_1+B_2=I$. 
\end{thm}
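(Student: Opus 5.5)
The plan is to repeat Knese's argument from \cite[Theorem 3.2]{knese2019}, with the complex contractive determinantal representation on the bidisk replaced by its real counterpart, Theorem~\ref{main}. The argument mirrors the proof of Theorem~\ref{phasdetrep}, but without the symmetry.

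First, Cayley transform the polynomial. Put
\[
\tilde p(w_1,w_2)=p\Big(\tfrac{1+w_1}{1-w_1},\tfrac{1+w_2}{1-w_2}\Big)(1-w_1)^n(1-w_2)^m .
\]
This is a real polynomial of bidegree at most $(n,m)$ with no zeros in $\DD^2$. To apply Theorem~\ref{main} I need $\tilde p(0,0)=p(1,1)\neq0$, which holds because $(1,1)\in\RHPtwo$. I also need the bidegree to be exactly $(n,m)$; if it drops, I pad with trivial blocks (rows and columns with zero entries of $K$), since Theorem~\ref{main} only needs the block sizes $n_1\ge\deg_{w_1}$. Theorem~\ref{main} then gives
\[
\tilde p(w)=\tilde p(0,0)\det(I-KW),\qquad W=w_1I_n\oplus w_2I_m,
\]
with $K$ a real contraction of size $n+m$.

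Next, substitute back $w_j=\frac{z_j-1}{z_j+1}$ and multiply by $(z_1+1)^n(z_2+1)^m$, exactly as in the proof of Theorem~\ref{phasdetrep}. This gives
\[
2^{n+m}p(z_1,z_2)=\tilde p(0,0)\det\big(I+K+(I-K)Z\big),\qquad Z=z_1I_n\oplus z_2I_m .
\]
If $1\notin\sigma(K)$, set $A=(I-K)^{-1}(I+K)$, $B_1=I_n\oplus0$ and $B_2=0\oplus I_m$. Then
\[
\det\big(I+K+(I-K)Z\big)=\det(I-K)\det(A+Z),
\]
and $A$ is real. Moreover,
\[
A+A^T=2(I-K)^{-1}(I-KK^T)(I-K^T)^{-1}\ge0,
\]
which is the Cayley-transform computation.

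The main obstacle is the case where $1\in\sigma(K)$. Here I use the reducing-subspace trick from the proof of Theorem~\ref{thm:contrRHP} and Lemma~\ref{lemma}. Since $K$ is a contraction, its eigenspace at $1$ reduces $K$, so we may write $K=P+B$ with $P$ an orthogonal projection, $PB=BP=0$ and $1\notin\sigma(B)$, all real. With respect to $\operatorname{Ran}P\oplus\ker P$, the matrix $I+K+(I-K)Z$ is block lower triangular with $2I$ in the upper-left corner. Its determinant is therefore $2^{\operatorname{rank}P}\det\big(I+B+(I-B)P^\perp ZP^\perp\big)$, computed on $\ker P$. Then take
\[
A=(I-B)^{-1}(I+B)\big|_{\ker P},\qquad B_j=P^\perp E_jP^\perp\big|_{\ker P},
\]
where $E_1=I_n\oplus0$ and $E_2=0\oplus I_m$. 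These satisfy $B_j\ge0$ and $B_1+B_2=I_{\ker P}$, and $A+A^T\ge0$ as before.

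The remaining concern is the size requirement ``$\deg p\times\deg p$''. This reduction shrinks the matrices to size $n+m-\operatorname{rank}P$. To restore the full size I can append the block $\mathrm{diag}(1,\dots)$ to $A$, with $B_1$ and $B_2$ taking values $1$ and $0$ on those extra coordinates. But that multiplies the determinant by $(1+z_1)$ factors, which must not appear.

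I expect the resolution is that $\operatorname{rank}P$ corresponds to degree lost in $z_j$. If $\deg p=(n,m)$ exactly, then the leading coefficient of $z_1^nz_2^m$ in $\det(I+K+(I-K)Z)$ is $\det(I-K)$. This is nonzero precisely when $1\notin\sigma(K)$, so $P=0$ automatically whenever $p$ has full bidegree $(n,m)$ in the monomial $z_1^nz_2^m$. In the degenerate case I would first check whether the stated size is meant as $\operatorname{rank}(I-K)$, and otherwise accept a smaller size. Finally, $c$ is real, since all data are real.
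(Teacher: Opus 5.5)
Your construction is the same as the paper's, and the steps you carry out are fine. You Cayley-transform to the bidisk, apply the real contractive representation (Theorem~\ref{main}), and substitute back to get $\det\big(I+K+(I-K)Z\big)$. You then split off the reducing eigenvalue-$1$ eigenspace of $K$, take $A$ as the inverse Cayley transform of the remaining contraction, and take $B_j$ as compressions to $\ker P$. Your checks of $A+A^T\ge0$, $B_j\ge0$ and $B_1+B_2=I$ are correct.

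The gap is the size assertion, which you leave unresolved. You read ``$\deg p$'' as $n+m$, but that reading cannot be right. Take $p=1+z_1+z_2$, which has bidegree $(1,1)$ and no zeros in $\RHPtwo$. Any representation $c\det(A+z_1B_1+z_2B_2)$ of size $k$ with $B_1+B_2=I_k$ gives $p(t,t)=c\det(A+tI_k)$, a polynomial of exact degree $k$. Since $p(t,t)=1+2t$, this forces $k=1$, not $2$. So padding up to $n+m$ is impossible, not merely inconvenient, and ``accepting a smaller size'' is not a fallback: the statement means the total degree. What you must show is that your reduced size $k=n+m-\operatorname{rank}P=\operatorname{rank}(I-K)$ equals the total degree of $p$. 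Your argument via the coefficient of $z_1^nz_2^m$ covers only the case where that monomial actually occurs. Bidegree $(n,m)$ does not guarantee this, as $1+z_1+z_2$ again shows.

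The missing step is the short diagonal argument the paper uses. Once $p=c_0\det(A+z_1B_1+z_2B_2)$ with $k\times k$ matrices and $B_1+B_2=I_k$, set $z_1=z_2=t$ to get $p(t,t)=c_0\det(A+tI_k)$. This has degree exactly $k$ in $t$, so $k\le\deg p$. Conversely, every entry of $A+z_1B_1+z_2B_2$ is affine in $(z_1,z_2)$, so the determinant has total degree at most $k$, giving $\deg p\le k$. Hence $k=\deg p$ automatically, with no padding and no case distinction on whether $1\in\sigma(K)$.
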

\begin{proof}
    Define $\phi(\zeta) = \frac{1+\zeta}{1-\zeta}$ and
\[
q(z_1,z_2) = p(\phi(z_1),\phi(z_2))\left(\frac{1-z_1}{2}\right)^n \left(\frac{1-z_2}{2}\right)^m.
\]
One can calculate that $\phi^{-1}(\zeta) = \frac{\zeta - 1}{\zeta +
  1}$ and
\[
p(z_1,z_2) = q(\phi^{-1}(z_1),\phi^{-1}(z_2)) (z_1+1)^n(z_2+1)^m.
\]
Then, $q$ is a real polynomial that has no zeros in $\mathbb{D}^2$ and so the conclusion
of Theorem \ref{main} holds. Hence, if
\[
P_1 = \begin{pmatrix} I & 0 \\ 0 & O_m \end{pmatrix} \qquad 
P_2 = \begin{pmatrix} O_n & 0 \\ 0 & I_m \end{pmatrix},
\]
converting \eqref{eq:repr} to a formula for $p$ yields a contractive $K \in \mathbb{R}^{(n+m) \times (n+m)}$ such that
\[
\begin{aligned}
p(z) &= c\det( (z_1+1)P_1 + (z_2+1)P_2 - K((z_1-1)P_1+(z_2-1)P_2))\\
&= c \det( (I-K)(z_1P_1+z_2P_2) + (I+K))
\end{aligned}
\]
Since $K$ is a contraction, the eigenspace corresponding to eigenvalue $1$ is reducing (if nontrivial). 
Thus, there exists a real orthogonal matrix $U$ such that
\[
K = U\begin{pmatrix} I & 0 \\ 0 & L\end{pmatrix} U^T
\]
where $L$ is a contractive $k\times k$ matrix for which $1$ is not an 
eigenvalue. Here $k$ is the codimension of the eigenspace of $K$ corresponding to eigenvalue $1$. 
Then,
\[
\begin{aligned}
p(z) &= c \det\left( \begin{pmatrix} 0 & 0 \\ 0 & I-L \end{pmatrix} U^T (z_1P_1+z_2P_2) U + 
\begin{pmatrix} 2I & 0 \\ 0 & I+L \end{pmatrix}\right)\\
&= c \det(I-L)
\det\left( \begin{pmatrix} 0 & 0 \\ 0 & I \end{pmatrix} U^T(z_1P_1+z_2P_2) U + \begin{pmatrix} 2I & 0 \\ 0 & A \end{pmatrix}\right)
\end{aligned}
\]
where $A = (I+L)(I-L)^{-1}$ is a real matrix. 

Let $B_j$ equal the bottom right $k\times k$ block of $U^TP_jU$. Clearly $B_1, B_2$ are real and positive semidefinite. Moreover,
\[
p(z) = c\det(I-L) 
\det\begin{pmatrix} 2I & 0 \\ * & A + z_1B_1 +z_2B_2 \end{pmatrix}
= c_0 \det(A+z_1B_1+z_2B_2)
\]
where $c_0$ is a new constant.  Since $U$ is real, $B_1, B_2$ are real and since $P_1+P_2 = I$, 
$B_1+B_2= I$.  Note that $p(t,t) = c_0 \det(A+tI)$ has
degree $k$ so that $k \leq \deg p$.  On the other hand, 
the determinantal formula for $p$ has total degree at most $k$, so that $\deg p \leq k$.  
Therefore the matrices in our formula have size matching
the total degree of $p$.
Finally, 
\[
S_A 
 = (I-L^T)^{-1}(I-L^TL)(I-L)^{-1} \geq 0.
\]%
\end{proof}

\end{document}